\documentclass[11pt]{article}
\usepackage{amsfonts,amsthm}
\usepackage{amssymb}
\usepackage{amsmath}
\usepackage{dutchcal}
\usepackage{mathrsfs}

\usepackage{hyperref}
\hypersetup{colorlinks=true,linkcolor=red,citecolor=red,filecolor=magenta,urlcolor=blue}
\usepackage{xcolor}
\usepackage{cleveref}
\usepackage{comment}
\newcommand{\mc}[1]{\mathcal{#1}}

\def\sqr#1#2{{\vcenter{\vbox{\hrule height.#2pt
              \hbox{\vrule width.#2pt height#1pt \kern#1pt \vrule
width.#2pt}
              \hrule height.#2pt}}}}
\def\dbN{{\mathbb{N}}}

\def\3n{\negthinspace \negthinspace \negthinspace }
\def\2n{\negthinspace \negthinspace }
\def\1n{\negthinspace }
\def\ns{\noalign{\smallskip} }
\def\ns{\noalign{\medskip} }

\def\no{\noindent}

\def\ss{\smallskip}

\def\q{\quad}

\def\liminf{\mathop{\underline{\rm lim}}}

\def\bel{\begin{equation}\label}
\def\ee{\end{equation}}
\def\bea{\begin{eqnarray}}
\def\eea{\end{eqnarray}}
\def\bt{\begin{theorem}}
\def\et{\end{theorem}}
\def\bc{\begin{corollary}}
\def\ec{\end{corollary}}
\def\bl{\begin{lemma}}
\def\el{\end{lemma}}
\def\bp{\begin{proposition}}
\def\ep{\end{proposition}}
\def\br{\begin{remark}}
\def\er{\end{remark}}
\def\ba{\begin{array}}
\def\ea{\end{array}}
\def\bd{\begin{definition}}
\def\ed{\end{definition}}

\newtheorem{lemma}{Lemma}[section]
\newtheorem{remark}{Remark}[section]

\newtheorem{theorem}{Theorem}[section]
\newtheorem{corollary}{Corollary}[section]

\newtheorem{definition}{Definition}[section]
\newtheorem{proposition}{Proposition}[section]

\makeatletter
   
   \@addtoreset{equation}{section}
\makeatother

\usepackage{enumitem}
\begin{document}

\title{\bf   Quantitative linear approximation  for controllability of quasi-linear parabolic equations\thanks{This work was partially supported by the National Key R\&D Program of China
under grants 2023YFA1009002 and 2024YFA1013400,  the NSF of China under grant 12371444, and the
New Cornerstone Investigator Program.}}

\author{Manish Kumar\thanks{School of Mathematics,  Sichuan University,
Chengdu  610064, China. E-mail address: gomanishnu@gmail.com.}, \quad  Xu Liu\thanks{School of Mathematics and Statistics, Northeast Normal
University, Changchun 130024, China.   E-mail address:
liux216@nenu.edu.cn.}   \quad and \quad  Xu Zhang\thanks{School of Mathematics and New Cornerstone Science Laboratory,  Sichuan University,
Chengdu  610064, China. E-mail address: zhang\_xu@scu.edu.cn.}}

\date{}

\maketitle

\begin{abstract}
\no
This paper establishes
 a quantitative  relationship between the null
  controllability of a quasi-linear parabolic equation and  its linear counterpart.
  Under suitable assumptions and for sufficiently small initial data,
  we prove that  both
  systems are null controllable, and that the differences between
  their corresponding controls and states satisfy a quadratic error estimate.
  Unlike standard controllability problems,
  the controls constructed here are designed  not only to steer the
  systems to a prescribed target at the
  given terminal time, but  also  to ensure that the deviation between the nonlinear and linear dynamics obeys this estimate.

To prove this approximation result,
we develop two complementary methods.
The first  is an indirect,  modular  time-splitting approach that
  reveals  a transfer mechanism: a control for the linear system is initially
   applied to both systems,
    a zero-control interval propagates the resulting quadratic state discrepancy,
    and a final local control eliminates the remaining nonlinear state.
The second  is a direct approach utilizing
  Carleman estimates and fixed-point techniques.
This method  establishes a uniform comparison for the relevant linearized systems
and  yields    new
Carleman estimates for parabolic operators with $C^{1, 1}$-principal coefficients.

Although both approaches  lead to the same quadratic approximation,
they possess  critical  differences.
The indirect method can be generalized to
other nonlinear evolution equations, provided
 suitable local controllability and same-control stability
results are available. Conversely,
 the direct approach supplies  detailed analytic estimates and explicitly
demonstrates  the dependence of the error estimate on the principal coefficients.
 \end{abstract}

\medskip

\no{\bf Key Words}.
Quasi-linear parabolic equation,  linear parabolic equation,
     null controllability,  Carleman estimate

\medskip

\no{\bf AMS subject classifications}. 93B05,  93C20,  35K59

\date{}
\maketitle

\section{Introduction}

Let  $T>0$, $n\in\mathbb N$,  and  $\Omega\subseteq\mathbb{R}^n$ be a   bounded domain
  with a $C^4$ boundary $\Gamma$.
Set $Q=\Omega\times(0,T)$ and $\Sigma=\Gamma\times(0,T)$.
Let $\omega_0$ and
  $\omega$ be two  non-empty open subsets of $\Omega$ such that
  $\overline{\omega_0}\subseteq\omega$. Let
$\xi_{0}\in C_0^\infty(\omega)$ be a given cutoff  function satisfying   $\xi_{0}=1$
 in $\omega_0$ and $0\leq \xi_0\leq 1$ in $\omega$.
We consider the following controlled quasi-linear parabolic equation:
\begin{eqnarray}\label{aeq1}
	\left\{
	\begin{array}{ll}
		y_t-\sum\limits_{i, j=1}^n \left(a^{ij}(y, \nabla y)y_{x_i}\right)_{x_j} + f(y, \nabla y)=\xi_0 u &\mbox{ in }Q, \\ \ns
		y=0 &\mbox{ on }\Sigma, \\ \ns
		y(x, 0)=y_0(x) &\mbox{ in }\Omega,
	\end{array}
	\right.
\end{eqnarray}
where $u$ is the control  variable,  $y$ is the state variable,
$y_0$ is the initial value, $f\in C^3(\mathbb R^{1+n})$ with $f(0, {\bf 0})=0$,
and $a^{i j}\in C^3(\mathbb R^{1+n})$  with $a^{i j}=a^{ji}$ for all $i, j=1, \cdots, n$. Moreover,
there exists a positive constant $\rho_0$ such that
\begin{align}\label{a_ij_cond}
	\sum\limits_{i, j=1}^n a^{i j}(s, \zeta)\eta_i\eta_j
	\geq \rho_0|\eta|^2, \quad\forall\ (s, \zeta, \eta)=(s, \zeta_1, \cdots, \zeta_n,  \eta_1, \cdots, \eta_n)\in
	\mathbb{R}\times\mathbb{R}^n \times\mathbb{R}^n.
\end{align}
In what follows, for a $C^1$ function $g=g(s, \zeta)=g(s, \zeta_1, \cdots, \zeta_n)$ defined
in $\mathbb{R}^{1+n}$,
 we  denote by $g_s(\cdot, \cdot)$ and $g_{\zeta_k}(\cdot, \cdot)$ its
  partial derivatives
 with respect to the first variable $s\in \mathbb R$ and
  the $(k+1)$-th variable $\zeta_k\in \mathbb R$
   $(k=1, \cdots, n)$, respectively.
  Let $\nabla g$ and $\nabla_\zeta g$ denote
  the gradient of $g$ with respect to all variables and  its gradient with
  respect to the last $n$
  variables, respectively.
 Furthermore, for a $C^2$ function $\tilde g=\tilde g(x, t)$ defined in $Q$, we use
 $\nabla \tilde g$ and $\Delta \tilde g$  to denote its gradient and Laplacian with respect to the spatial variable $x\in \Omega$.

Quasi-linear parabolic equations
  describe a wide range of natural diffusion  phenomena.
For instance,
 let $y$, $c$ and $\rho$ denote
 the temperature distribution,  specific heat,  and
  density
   of a
  body $\Omega$, respectively.
  By  energy
   conservation and   Fourier's  law,
   we obtain
$
 c \rho y_t-\sum\limits_{i, j=1}^n (a^{ij}y_{x_i})_{x_j}+f=\xi_0 u,
$
where $A\triangleq\left(a^{i j}\right)_{1\leq i, j\leq n}$ is the  thermal conductivity
 matrix,    while
 $f$ and $u$ represent  heat sources, with
 $u$  being artificially regulated  inside the localized domain $\omega$.
When
  the thermal conductivity matrix $A$   and the heat source $f$
   depend  on
 the temperature distribution $y$ or  its  gradient $\nabla y$,
   the  thermal  diffusion process
  is governed by
\begin{equation}\label{DD1}
c \rho y_t-\sum_{i, j=1}^n \left(a^{ij}(y, \nabla y)y_{x_i}\right)_{x_j}+f(y, \nabla y)=\xi_0 u.
\end{equation}
When $A$ is symmetric and
uniformly positive definite, (\ref{DD1}) becomes  a controlled quasi-linear parabolic equation.
For simplicity,  we assume that
   $c\rho=1$, in which case  (\ref{DD1}) reduces to  the first equation in (\ref{aeq1}).

Note that when both $y$ and $|\nabla y|$ are sufficiently small,
  $a^{i j}(y, \nabla y)$ and $f(y, \nabla y)$ can be formally approximated by
   $\tilde{a}^{i j}\triangleq a^{i j}(0, {\bf 0})$ and $f(0, {\bf 0})=0$, respectively.
Consequently,
  (\ref{aeq1})
can be formally  approximated  by the following controlled linear parabolic equation:
   \begin{eqnarray}\label{aeq1*}
	\left\{
	\begin{array}{ll}
		w_t-\sum\limits_{i, j=1}^n \left(\tilde{a}^{ij}w_{x_i}\right)_{x_j}=\xi_0 v &\mbox{ in }Q, \\ \ns
		w=0 &\mbox{ on }\Sigma, \\ \ns
		w(x, 0)=y_0(x) &\mbox{ in }\Omega.
	\end{array}
	\right.
\end{eqnarray}
Here, we use $w$ and $v$ in (\ref{aeq1*}) to replace  $y$ and $u$ in (\ref{aeq1})
 to avoid confusing notation.
Note that the initial data $y_0(\cdot)$ in both (\ref{aeq1}) and (\ref{aeq1*})
are chosen to be identical.

The purpose of this paper
 is to provide  a rigorous proof of the
 formal  model approximation derived above and,  more importantly,
  to establish a precise connection between the null controllability of the quasi-linear parabolic equation \eqref{aeq1} and that of the linear heat equation \eqref{aeq1*}.
  More precisely, for sufficiently small initial data in a suitable H\"older space,
  we construct controls $u$ and $v$ such that the associated systems
  \eqref{aeq1}) and \eqref{aeq1*} both reach the zero state at time $T$.
Moreover, we show that the gap between the corresponding solutions is  of
quadratic order  with respect to the size of  initial data.
Thus, in the small-data regime,
 the null controllability of the quasi-linear parabolic system \eqref{aeq1} can be accurately
  approximated by that of the linear parabolic system \eqref{aeq1*} with negligible error.
While controllability problems for quasi-linear parabolic equations
have been extensively studied in the literature (see \cite{MB}, \cite{DLZ}, \cite{f}, \cite{Liu}, \cite{Liux}, \cite{N}, \cite{Ni}, \cite{z} and the references therein), to the best of our knowledge, the relationship between the controllability of quasi-linear and linear systems has not been previously investigated.

Next, we introduce some standard notation.  For any $k,
\ell\in \dbN$,  we denote by $C^{k,\ell}(\overline{Q})$ the set of
 functions defined on $\overline{Q}$ with continuous partial derivatives
 up to order $k$ in
 the spatial  variable and up to order $\ell$ in
 time.
   Let $C^k(\overline{\Omega})$ be the set of
functions  defined on $\overline{\Omega}$ with
 continuous partial derivatives
up to order $k$.  Furthermore, for  $\delta\in (0, 1)$, we use
$ C^{k+\delta}(\overline{\Omega})$ and
$C^{k+\delta, \frac{k+\delta}{2}}(\overline{Q})$ to denote the corresponding
H\"older spaces defined on $\overline{\Omega}$ and $\overline{Q}$, respectively.
Let $C_0^{k+\delta}(\Omega)$ denote the subspace
 of  functions in $C^{k+\delta}(\overline{\Omega})$ with compact support in $\Omega$.
Recall that $C^{k+\delta, \frac{k+\delta}{2}}(\overline{Q})$ is a Banach space
 equipped  with the norm
$|h|_{C^{k+\delta, \frac{k+\delta}{2}}(\overline{Q})}=
\sum\limits_{0\leq 2r+|\sigma|\leq k} \sup\limits_{(x, t)\in\overline{Q}}|\partial_x^\sigma\partial_t^r h(x, t)|+
[h]_{k, \delta},$
where
$$
[h]_{k, \delta}\!=\!
\left\{\!
\begin{array}{ll}
\displaystyle\sum\limits_{2r+|\sigma|=k} \!
\sup\limits_{(x_1,t_1)\neq(x_2,t_2)}\!
\frac
{|\partial^\sigma_x\partial^{r}_t
	h(x_1,t_1)\!-\!\partial^\sigma_x\partial_t^{r}
	h(x_2,t_2)|}{(|x_1-x_2|\!+\!|t_1-t_2|^{1/2})^\delta},\  \ \  \  \mbox{ if } k \mbox{ is even};&\\[5mm]
	\displaystyle\sum\limits_{2r+|\sigma|=k}
	\sup\limits_{(x_1,t_1)\neq(x_2,t_2)}
	\frac
	{|\partial^\sigma_x\partial^{r}_t
		h(x_1,t_1)-\partial^\sigma_x\partial_t^{r}
		h(x_2,t_2)|}{(|x_1-x_2|+|t_1-t_2|^{1/2})^\delta}&\\
	\displaystyle \quad+
	\sum\limits_{2r+|\sigma|=k-1} \sup\limits_{x\in\overline{\Omega}}
	\sup\limits_{t_1\neq t_2}
	\frac
	{|\partial^\sigma_x\partial^{r}_t
		h(x,t_1)-\partial^\sigma_x\partial_t^{r}
		h(x,t_2)|}{|t_1-t_2|^{\frac{1+\delta}{2}}},\  \mbox{ if }k \mbox{ is odd},&
		\end{array}
		\right.
$$
with $\sigma=(\sigma_1, \cdots, \sigma_n)$ being a multi-index and $|\sigma|=\sigma_1+\cdots+\sigma_n$ (for detailed  definitions, see  \cite{La}). For any $k, \ell\in\dbN$ and $p\geq 2$, we use $W^{k, \ell}_p(Q)$ to denote the standard Sobolev space.

We require the following assumption:
 $$
\displaystyle{\bf (H) }\quad \quad
  f(0, {\bf 0})=0\quad\quad\mbox{and}\quad\quad \nabla f(0, {\bf 0})={\bf 0}.
$$
First,   we state the
 relationship between   (\ref{aeq1})
 and (\ref{aeq1*}) under identical  controls and initial values.
\begin{proposition}\label{th1}
Assume that ${\bf (H)}$ holds. Then there exists a positive constant $\rho_1$
such that for any  $y_0\in C^{3+\delta}(\overline{\Omega})$
and $u\in C^{1+\delta, \frac{1+\delta}{2}}(\overline{Q})$
satisfying the  compatibility conditions and
 $|y_0|_{C^{3+\delta}(\overline{\Omega})}+
|u|_{C^{1+\delta, \frac{1+\delta}{2}}(\overline{Q})}\leq \rho_1$,  the corresponding solutions
$y$ and $w$ to $(\ref{aeq1})$ and $(\ref{aeq1*})$ with $v=u$ satisfy
$$
|y-w|_{C^{3+\delta, \frac{3+\delta}{2}}(\overline{Q})}
\leq C\left(|y_0|^2_{C^{3+\delta}(\overline{\Omega})}+
|u|^2_{C^{1+\delta, \frac{1+\delta}{2}}(\overline{Q})}\right).
$$
Here and in what follows,
  $C$ denotes a generic
    positive constant depending only on $n, T, \Omega, \omega, \omega_0,
\delta$, $f$ and $a^{i j}$, which may vary from line to line.
\end{proposition}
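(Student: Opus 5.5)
The plan has three steps. First, control the size of $y$ and $w$ in $C^{3+\delta,\frac{3+\delta}{2}}(\overline{Q})$ linearly in the data. Then write the equation satisfied by $z\triangleq y-w$, whose right-hand side is quadratic in $(y,\nabla y)$. Finally, apply the linear Schauder estimate to that equation.

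\emph{Step 1 (linear bounds).} For the linear problem (\ref{aeq1*}) with $v=u$, the classical Schauder theory for constant-coefficient parabolic equations (see \cite{La}) gives
$$|w|_{C^{3+\delta,\frac{3+\delta}{2}}(\overline{Q})}\le C\big(|y_0|_{C^{3+\delta}(\overline{\Omega})}+|u|_{C^{1+\delta,\frac{1+\delta}{2}}(\overline{Q})}\big).$$
This uses the compatibility conditions and $\xi_0 u\in C^{1+\delta,\frac{1+\delta}{2}}$.

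For the quasi-linear problem (\ref{aeq1}), I would obtain existence and the same bound by a fixed point on the ball
$$B_r=\{\,\hat y:\ |\hat y|_{C^{3+\delta,\frac{3+\delta}{2}}}\le r\,\}.$$
Given $\hat y\in B_r$, solve the linear problem
$$y_t-\sum_{i,j}\tilde a^{ij}y_{x_ix_j}=\xi_0u+F(\hat y),$$
with the same boundary and initial data. Here
$$F(\hat y)=\sum_{i,j}\big((a^{ij}(\hat y,\nabla\hat y)-\tilde a^{ij})\hat y_{x_i}\big)_{x_j}-f(\hat y,\nabla\hat y).$$
Because $a^{ij},f\in C^3$ and ${\bf (H)}$ holds, Taylor expansion together with the Banach algebra property of Hölder spaces gives
$$|F(\hat y)|_{C^{1+\delta,\frac{1+\delta}{2}}}\le C|\hat y|^2_{C^{3+\delta,\frac{3+\delta}{2}}}.$$
This needs $(a^{ij}-\tilde a^{ij})(\hat y,\nabla\hat y)=O(|\hat y|+|\nabla\hat y|)$ in $C^{2+\delta,\frac{2+\delta}{2}}$ and $f(\hat y,\nabla\hat y)=O(|\hat y|^2+|\nabla \hat y|^2)$ in $C^{1+\delta,\frac{1+\delta}{2}}$. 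The latter is exactly where $\nabla f(0,{\bf 0})={\bf 0}$ enters.

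The same argument gives the Lipschitz bound
$$|F(\hat y_1)-F(\hat y_2)|\le C r\,|\hat y_1-\hat y_2|.$$
So for $r=2C_0(|y_0|+|u|)$ and $\rho_1$ small, the map is a contraction on $B_r$. The fixed point is the (unique, by the Lipschitz bound and standard uniqueness for (\ref{aeq1})) solution $y$, and
$$|y|_{C^{3+\delta,\frac{3+\delta}{2}}}\le C(|y_0|+|u|).$$

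\emph{Step 2 (the difference).} The function $z=y-w$ satisfies
$$z_t-\sum_{i,j}\tilde a^{ij}z_{x_ix_j}=F(y)\ \text{in }Q,\qquad z=0\ \text{on }\Sigma,\qquad z(\cdot,0)=0.$$
Schauder estimates then give
$$|z|_{C^{3+\delta,\frac{3+\delta}{2}}}\le C|F(y)|_{C^{1+\delta,\frac{1+\delta}{2}}}\le C|y|^2\le C\big(|y_0|^2+|u|^2\big),$$
which is the claim.

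\emph{Main obstacle.} The delicate point is the compatibility conditions at $\Gamma\times\{0\}$ needed for $C^{3+\delta,\frac{3+\delta}{2}}$ regularity. These are the zeroth and first order conditions, since $3+\delta$ requires $\partial_t z$ to be continuous up to the corner. For $z$ the data vanish, so the first-order condition requires $F(y)(\cdot,0)=0$ on $\Gamma$.

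I would check this from the compatibility conditions assumed for $y_0,u$ in both systems: $y_0=0$ on $\Gamma$, and
$$\sum_{i,j}(a^{ij}(y_0,\nabla y_0)y_{0,x_i})_{x_j}-f(y_0,\nabla y_0)+\xi_0u(\cdot,0)=0=\sum_{i,j}\tilde a^{ij}y_{0,x_ix_j}+\xi_0u(\cdot,0)\quad\text{on }\Gamma.$$
Subtracting yields $F(y_0)=0$ on $\Gamma$. In the fixed-point map, I would keep the iteration inside the closed subset of $B_r$ with $\hat y(\cdot,0)=y_0$, so that $F(\hat y)(\cdot,0)=F(y_0)$ vanishes on $\Gamma$ and the linear problems are compatible at each step.

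The remaining work is routine bookkeeping: verifying the composition estimates for $a^{ij}(\hat y,\nabla\hat y)$ in the anisotropic Hölder norms, which is where $C^3$ regularity of $a^{ij},f$ is used.
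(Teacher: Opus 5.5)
Your argument is the paper's. You write the equation for $z=y-w$ with right-hand side $F(y)=\sum_{i,j}\big(\widehat{a}^{ij}(y,\nabla y)y_{x_i}\big)_{x_j}-f(y,\nabla y)$ and apply the Schauder estimate for the constant-coefficient operator. You then get the quadratic bound $|F(y)|_{C^{1+\delta,\frac{1+\delta}{2}}(\overline{Q})}\le C|y|^2_{C^{3+\delta,\frac{3+\delta}{2}}(\overline{Q})}$ from $\widehat{a}^{ij}(0,{\bf 0})=0$, ${\bf (H)}$ and the algebra property. You close with $|y|_{C^{3+\delta,\frac{3+\delta}{2}}(\overline{Q})}\le C\big(|y_0|_{C^{3+\delta}(\overline{\Omega})}+|u|_{C^{1+\delta,\frac{1+\delta}{2}}(\overline{Q})}\big)$. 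The paper cites local well-posedness of (\ref{aeq1}) for that last bound and does not discuss compatibility for the $z$-problem. Your derivation of $F(y_0)=0$ on $\Gamma$, by subtracting the two first-order compatibility conditions, is correct and fills a step the paper leaves implicit.

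The one step that does not go through as written is the contraction in your self-contained proof of the a priori bound. Under the paper's hypothesis $a^{ij}\in C^3$, the estimate $|F(\hat y_1)-F(\hat y_2)|_{C^{1+\delta,\frac{1+\delta}{2}}(\overline{Q})}\le Cr|\hat y_1-\hat y_2|_{C^{3+\delta,\frac{3+\delta}{2}}(\overline{Q})}$ is not available. After expanding the divergence, that norm contains the $\delta$-H\"older seminorm of $D^2a^{ij}(\Phi_1)-D^2a^{ij}(\Phi_2)$. Here $\Phi_k=(\hat y_k,\nabla\hat y_k)$ and $D^2a^{ij}$ is the Hessian in $(s,\zeta)$. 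This seminorm is controlled linearly by $\Phi_1-\Phi_2$ only if the third derivatives of $a^{ij}$ are H\"older continuous. The quadratic bound on $F$ itself is fine with $C^3$; it is the difference estimate that loses a derivative.

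The standard repair is to contract in the weaker norm of $C^{2+\delta,\frac{2+\delta}{2}}(\overline{Q})$. There the estimate $|F(\hat y_1)-F(\hat y_2)|_{C^{\delta,\frac{\delta}{2}}(\overline{Q})}\le Cr|\hat y_1-\hat y_2|_{C^{2+\delta,\frac{2+\delta}{2}}(\overline{Q})}$ does hold for $a^{ij},f\in C^3$. The difference problem has zero data and a right-hand side vanishing at $t=0$, so the $C^{2+\delta}$ Schauder estimate applies. Your set $\{\hat y\in B_r:\ \hat y(\cdot,0)=y_0\}$ is closed in that topology, by Arzel\`a--Ascoli and lower semicontinuity of H\"older seminorms. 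So Banach's theorem still gives the fixed point in $B_r$ with the linear bound. Alternatively, simply cite local well-posedness as the paper does.
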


\Cref{th1} demonstrates
 that the quasi-linear parabolic system \eqref{aeq1} can be accurately
 approximated by the associated linear system \eqref{aeq1}.
 More precisely, when $y_0$ and $u$ are sufficiently small,
 the difference between their corresponding solutions is a higher-order infinitesimal with respect to the data. The proof is deferred to the \Cref{chapter4}.
Importantly, this approximation property extends to the null controllability of the two equations, which constitutes the main result of this paper:
\begin{theorem}\label{th2}
Assume that ${\bf (H)}$ holds. Then there exists a positive constant $\rho_2$  such that
 for any initial value $y_0\in C^{3+\delta}(\overline\Omega)$  satisfying the  compatibility conditions and  $|y_0|_{C^{3+\delta}(\overline{\Omega})}\leq \rho_2$, one can  find
 controls $u^*, v^*\in C^{1+\delta, \frac{1+\delta}{2}}(\overline{Q})$ satisfying
 \begin{align*}
 	|u^*|_{C^{1+\delta, \frac{1+\delta}{2}}(\overline{Q})} + |v^*|_{C^{1+\delta, \frac{1+\delta}{2}}(\overline{Q})}
 	\leq C |y_0|_{C^{3+\delta}(\overline{\Omega})},
 \end{align*}
 so that the  corresponding solutions
$y$ and $w$ to $(\ref{aeq1})$ and $(\ref{aeq1*})$ satisfy
$$y(\cdot, T; u^*)=w(\cdot, T; v^*)=0 \text{ in }\Omega.$$
Moreover, the difference of the controls and the associated solutions satisfy the
following quadratic error estimate:
\begin{align}\label{approx_est}
	|u^*-v^*|_{C^{1+\delta, \frac{1+\delta}{2}}(\overline{Q})} +
	|y-w|_{C^{3+\delta,  \frac{3+\delta}{2}}(\overline{Q})}\leq C|y_0|^{2}_{C^{3+\delta}(\overline{\Omega})}.
\end{align}
\end{theorem}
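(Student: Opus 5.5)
I would follow the indirect time-splitting route described in the abstract, using \Cref{th1} as the same-control stability result together with a local null controllability result for \eqref{aeq1} and for its linearization. Fix $0<T_1<T_2<T$, say $T_1=T/3$, $T_2=2T/3$. Set $\varepsilon=|y_0|_{C^{3+\delta}(\overline\Omega)}$.

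\emph{Step 1 (linear control on $[0,T_1]$).} For the constant-coefficient heat operator in \eqref{aeq1*}, I would construct a null control $\hat v$ on $[0,T_1]$ with $\hat v\in C^{1+\delta,\frac{1+\delta}{2}}$, bounded by $C\varepsilon$, and vanishing near $t=0$ and near $t=T_1$, so that gluing it with zero keeps the compatibility conditions and the H\"older regularity. One way to get it is the standard Carleman/HUM construction for a weighted $L^2$ control, followed by a cutoff-in-time and parabolic bootstrap argument that upgrades the regularity using $y_0\in C^{3+\delta}$; the linear dependence on $y_0$ gives the bound. Define $v^*=\hat v$ on $(0,T_1)$ and $v^*=0$ on $(T_1,T)$. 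Then $w(\cdot,t)=0$ for $t\ge T_1$.

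\emph{Step 2 (same control, then free evolution).} I would take $u^*=\hat v$ on $(0,T_1)$ and $u^*=0$ on $(T_1,T_2)$. By \Cref{th1}, applied on $[0,T_2]$ (the constants there depend on the time horizon only through $T$), we get $|y-w|_{C^{3+\delta,\frac{3+\delta}{2}}(\overline\Omega\times[0,T_2])}\le C\varepsilon^2$. In particular $|y(\cdot,T_2)|_{C^{3+\delta}}\le C\varepsilon^2$, since $w(\cdot,T_2)=0$. The zero-control interval $(T_1,T_2)$ is there so that $y(\cdot,T_2)$ satisfies the compatibility conditions of \eqref{aeq1} at the restart time. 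These conditions hold automatically for a smooth solution with zero forcing near $T_2$.

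\emph{Step 3 (local nonlinear control on $[T_2,T]$).} Next I would invoke a local null controllability result for \eqref{aeq1} in these H\"older spaces. For data $z_0$ small in $C^{3+\delta}$ and compatible, it should give a control $\tilde u$ with $|\tilde u|_{C^{1+\delta,\frac{1+\delta}{2}}}\le C|z_0|_{C^{3+\delta}}$ that vanishes near the initial time, and a state bounded by $C|z_0|$ in $C^{3+\delta,\frac{3+\delta}{2}}$. I would prove this by linearizing around zero (using \textbf{(H)}, so the linearized lower-order terms vanish), applying the linear result of Step 1, and then running a Schauder/Kakutani fixed point in which the coefficients $a^{ij}(z,\nabla z)$ are frozen. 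This step, with a quantitative linear bound on the control, is the main obstacle. The Carleman estimates must be uniform for principal coefficients that are only $C^{1,1}$-close to $\tilde a^{ij}$, and the H\"older regularity of the control must be recovered. I would first try the known results of the cited literature in this setting, and otherwise prove Carleman estimates for $C^{1,1}$ principal coefficients. Applying this with $z_0=y(\cdot,T_2)$ and setting $u^*=\tilde u$ on $(T_2,T)$ gives $y(\cdot,T)=0$. The last-interval contribution is $|u^*|+|y|\le C\varepsilon^2$, while $v^*=0$ and $w=0$ there.

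\emph{Conclusion.} The differences split by interval. On $(0,T_2)$ we have $u^*-v^*=0$ and $|y-w|\le C\varepsilon^2$ by Step 2. On $(T_2,T)$ both differences are $O(\varepsilon^2)$ by Step 3. Since all the pieces vanish to high order at the junctions, the global H\"older norms on $\overline Q$ are controlled by the sum of the piecewise norms. This yields \eqref{approx_est} and the bound $|u^*|+|v^*|\le C\varepsilon$, provided $\rho_2$ is small enough that $\varepsilon\le\rho_1$ and $C\varepsilon^2$ lies within the smallness threshold of the local result.
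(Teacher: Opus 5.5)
Your proposal is correct and is essentially the paper's indirect proof in Section~\ref{chapter4}. The steps match: a linear null control on $[0,T/3]$ is applied to both systems, Proposition~\ref{th1} makes the nonlinear state $O(|y_0|^2)$ at the restart time, there is a zero-control interval, and then a small local null control for \eqref{aeq1} acts on $[2T/3,T]$, with the controllability inputs and their vanishing at the time endpoints taken from the literature as in Lemma~\ref{lemma1!} and Remark~\ref{remark2}. Your small deviations are harmless: you apply Proposition~\ref{th1} once on $[0,T_2]$, and the gluing of $y-w$ at $T_2$ actually follows from continuity of the derivatives across the junction (all H\"older exponents are below $1$) rather than from vanishing there.
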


Note that the null controllability results for the quasi-linear and linear parabolic equations
 stated in \Cref{th2} can be deduced from \cite{DLZ}; see Remark 1.5 and the proofs of Proposition 4.1 and Theorem 1.2 therein.
 However, the approximation estimate \eqref{approx_est} does not follow directly from \cite{DLZ}.
 The principal novelty of \Cref{th2} therefore lies in establishing this quadratic error estimate.
More precisely, the theorem guarantees the existence of controls such that the discrepancies in the corresponding states and controls are of quadratic order with respect to the initial data.
 Consequently, this provides  rigorous theoretical justification for approximating the quasi-linear controllability problem by its linearization in the small-data regime, thereby offering a quantitative refinement that surpasses the mere establishment of local null controllability.

We extend the approximation results  of \cite{LX} from  semi-linear equations to the present quasi-linear
setting,  featuring
 an improved estimate for the difference between the controls.
 This yields the error estimate of order $2$ in \eqref{approx_est},
 replacing  the order $3/2$ estimate  obtained in \cite[Theorem 1.1]{LX}.
	The article \cite{LX} investigated the relationship between the controllability of
	semi-linear and linear parabolic equations. In the present paper,
	we extend and improve this analysis to the more general
	and physically natural quasi-linear setting, in which
	 the principal diffusion coefficients depend on the state and its gradient.
	 In contrast to the semi-linear case, the approximation considered here
	  is not obtained merely by neglecting a nonlinear lower-order term; rather,
	  it requires replacing the nonlinear principal part by a constant-coefficient
	  linear operator, thereby  introducing new analytical difficulties.

\smallskip

There are two complementary methods  to obtain this approximation,
serving distinct functional   roles within the proof architecture.

\smallskip

{\it Modular Time-Splitting Route:}   This route provides the explicit,
global architecture of  transferring  from the linear system to the quasi-linear system.
Once Proposition \ref{th1} and  local
controllability results for the two systems are established,
 the control-level estimate follows in
three stages.
On $[0, T/3]$,  a null control for the linear system is  applied to the quasi-linear
system; Proposition \ref{th1} ensures
 the two states agree up to quadratic order at $T/3$. On $[T/3, 2T/3]$,
both controls are set to zero, allowing
 the linear state to remain  at zero while the nonlinear state evolves from
a quadratically small value.
On $[2T/3, T]$,  the linear control remains zero,
 and a small null control is
applied strictly  to the quasi-linear system.  Because
 both the state and the final control are already of
quadratic size,  the global control and state differences retain this order.
The central zero-control interval
 provides the necessary  regularity bridge  to
  seamlessly  concatenate the controls.
Thus, this route is
transparent and portable: it applies whenever same-control stability and local null controllability
are available. This approach  is developed in \Cref{chapter4}.
	
	\smallskip

{\it Direct  Linearized Route:}  This route supplies  the underlying
 analytic mechanism of
the transfer principle and does not require {\it a  prior}  controllability results
  for the quasi-linear
equation.   We first analyze the relationship between the controllability of the linearized system associated with \eqref{aeq1} and that of the linear equation \eqref{aeq1*},  deriving
 uniform
 higher-order estimates for the differences between their corresponding controls and states.
	A key ingredient in this analysis is a novel  Carleman estimate for linear parabolic equations.
	 Unlike the standard weights used in existing Carleman estimates (see \cite{Liux}),
	 our weight function is non-singular at $t=0$,  enabling
	  a direct  estimate of  the initial data for the adjoint equation.
	This type of weight has also been utilized
	 in  stochastic setting (e.g.,
	\cite{H, zxl}).  However, while
	 \cite{zxl} established   Carleman estimate for stochastic parabolic operators
	  under the assumption that the principal coefficients possess
	bounded second-order mixed space-time derivatives,
	 our  estimate  applies to
	  deterministic parabolic operators
	whose principal coefficients only satisfy  $a_{ij}\in C^{1,1}(\overline Q)$.
	This route is developed in \Cref{chapter2} and \Cref{chapter3}, providing  a detailed decomposition of the error into coefficients and initial-data contributions.

\smallskip

 These  two routes  form a unified   proof architecture. The time-splitting argument outlines
the abstract transfer principle in a concise and elegant manner.
 Consequently,  it is broadly applicable to
 other nonlinear evolution equations where
  the corresponding controllability and same-control approximation results are known.
Conversely,
this direct method provides
 structural insights into   how the nonlinearity affects the approximation problem.
 In particular, it explicitly demonstrates
  that the quadratic error estimate between the controls for the quasi-linear and the approximate linear system arises in equal  part from the initial values, and  from
   the principal coefficients $a^{ij}$ and nonlinear function $f$.
   It
    thereby yields  finer structural information
    regarding  on the control construction and  error decomposition.

\smallskip

Furthermore,  assume that $\tilde{u}\in C^{1+\delta, \frac{1+\delta}{2}}(\overline{Q})$ and
$\tilde{y}_0\in C_0^{3+\delta}(\Omega)$, and that  $\tilde{y}, \tilde{w}\in C^{3+\delta, \frac{3+\delta}{2}}(\overline{Q})$ satisfy, respectively,
the following quasi-linear parabolic equation:
\begin{eqnarray}\label{aeq1***}
	\left\{
	\begin{array}{ll}
		\tilde{y}_t-\sum\limits_{i, j=1}^n \left(a^{ij}(\tilde{y}, \nabla \tilde{y})\tilde{y}_{x_i}\right)_{x_j} + f(\tilde{y}, \nabla \tilde{y})=\xi_0 \tilde{u} &\mbox{ in }Q, \\ \ns
		\tilde{y}=0 &\mbox{ on }\Sigma, \\ \ns
		\tilde{y}(x, 0)=\tilde{y}_0(x) &\mbox{ in }\Omega,
	\end{array}
	\right.
\end{eqnarray}
and linear parabolic equation:
\begin{eqnarray}\label{aeq1****}
	\left\{
	\begin{array}{ll}
		\tilde{w}_t-\sum\limits_{i, j=1}^n \left(\tilde{a}^{ij}\tilde{w}_{x_i}\right)_{x_j}=\xi_0 \tilde{u} &\mbox{ in }Q, \\ \ns
		\tilde{w}=0 &\mbox{ on }\Sigma, \\ \ns
		\tilde{w}(x, 0)=\tilde{y}_0(x) &\mbox{ in }\Omega.
	\end{array}
	\right.
\end{eqnarray}
Then, as an immediate corollary to Theorem \ref{th1}, we have the following approximation result for the controllability of  trajectories.
\begin{corollary}\label{coro}
	Assume that ${\bf (H)}$ holds. Then there exists a positive constant $\rho_3$  such that
	for any  $y_0, \tilde{y}_0\in C^{3+\delta}(\overline\Omega)$  and $\tilde{u}\in C^{1+\delta, \frac{1+\delta}{2}}(\overline{Q})$ satisfying the  compatibility conditions  and
	$$|y_0|_{C^{3+\delta}(\overline{\Omega})}
	+|\tilde{y}_0|_{C^{3+\delta}(\overline{\Omega})}
	+|\tilde{u}|_{C^{1+\delta, \frac{1+\delta}{2}}(\overline{Q})}\leq \rho_3,$$
	one can  find
	controls $u,  v\in C^{1+\delta, \frac{1+\delta}{2}}(\overline{Q})$ so that the  corresponding solutions
	$y$ and $w$ to $(\ref{aeq1})$ and $(\ref{aeq1*})$ satisfy
	$$
	y(\cdot, T; u)=\tilde{y}(\cdot, T)\quad\mbox{ and }\quad w(\cdot, T; v)=\tilde{w}(\cdot, T)\quad
	\mbox{ in } \Omega.$$
	Moreover,
	$$
	|u-v|_{C^{1+\delta, \frac{1+\delta}{2}}(\overline{Q})}+
	|y-w|_{C^{3+\delta, \frac{3+\delta}{2}}(\overline{Q})}\leq
	C\Big(|y_0|^{2}_{C^{3+\delta}(\overline{\Omega})}+
	|\tilde{y}_0|^{2}_{C^{3+\delta}(\overline{\Omega})}+|\tilde{u}|^2_{C^{1+\delta, \frac{1+\delta}{2}}(\overline{Q})}\Big).
	$$
\end{corollary}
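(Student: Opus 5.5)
Corollary \ref{coro} follows by reduction to null controllability (Theorem \ref{th2}) plus same-control stability (Proposition \ref{th1}). A literal subtraction $z=y-\tilde y$ does not work, because the quasi-linear equation is not translation invariant. I would therefore use the time-splitting architecture of \Cref{chapter4}, with the target trajectories in place of zero. Split $[0,T]$ into $[0,T/3]$, $[T/3,2T/3]$ and $[2T/3,T]$.

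\emph{First interval.} By Theorem \ref{th2} (or rather its local version on $[0,T/3]$ for the linear system, which is what the modular route uses), pick a linear control $v_1$ with $|v_1|\le C(|y_0|+|\tilde y_0|+|\tilde u|)$. It steers $w-\tilde w$ from $y_0-\tilde y_0$ to $0$ at $T/3$. By linearity, $v=\tilde u+v_1$ gives $w(T/3)=\tilde w(T/3)$. Apply the same control $u=v$ to \eqref{aeq1} on $[0,T/3]$. Proposition \ref{th1}, applied twice (to the pair $(y,w)$ with data $(y_0,u)$, and to $(\tilde y,\tilde w)$ with data $(\tilde y_0,\tilde u)$), gives
\[
|y(T/3)-\tilde y(T/3)|_{C^{3+\delta}}\le C\,\varepsilon^2, \qquad \varepsilon:=|y_0|+|\tilde y_0|+|\tilde u|.
\]

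\emph{Middle and last intervals.} On $[T/3,T]$ set $v=\tilde u$, so that $w\equiv\tilde w$ there. For the quasi-linear system, use $u=\tilde u$ on $[T/3,2T/3]$. Local well-posedness and Lipschitz dependence of \eqref{aeq1} on initial data in $C^{3+\delta}$ keep $|y-\tilde y|\le C\varepsilon^2$ at $2T/3$. Parabolic smoothing on this interval also restores the compatibility conditions needed for the next gluing step.

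On $[2T/3,T]$, I need a local exact-controllability-to-trajectories result for \eqref{aeq1} around the small trajectory $\tilde y$. Its control $u=\tilde u+u_2$ must satisfy $|u_2|\le C|y(2T/3)-\tilde y(2T/3)|\le C\varepsilon^2$ and steer $y$ to $\tilde y(T)$. I would obtain it exactly as Theorem \ref{th2} is obtained: write $z=y-\tilde y$, which solves a quasi-linear equation with coefficients depending smoothly on $(\tilde y,\nabla\tilde y)$ and $(z,\nabla z)$. Its linearization at $z=0$ has $C^{1,1}$ principal coefficients close to $\tilde a^{ij}$. The Carleman estimate of \Cref{chapter2} together with the fixed-point argument of \Cref{chapter3} then give null controllability of $z$, with control cost linear in the initial datum.

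\emph{Estimates.} On $[0,T/3]$ we have $u-v=0$. On $[T/3,2T/3]$, $u-v=0$ as well. On $[2T/3,T]$, $u-v=u_2=O(\varepsilon^2)$. For the states, $y-w$ is $O(\varepsilon^2)$ on the first interval by Proposition \ref{th1}. On the later intervals, $y-w=(y-\tilde y)+(\tilde y-\tilde w)$, where the first term is $O(\varepsilon^2)$ by stability and the second is $O(\varepsilon^2)$ by Proposition \ref{th1} on all of $Q$.

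\emph{Gluing.} The concatenated controls must be $C^{1+\delta,\frac{1+\delta}{2}}$ across the junctions. I would handle this as in the paper's route: choose the interval controls vanishing to sufficient order near the junction times, e.g.\ by multiplying by smooth time cutoffs built into the controllability construction. Then the total control is $\tilde u$ plus a smooth perturbation, and the glued states lie in $C^{3+\delta,\frac{3+\delta}{2}}(\overline Q)$ with the stated bounds.

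\emph{Main obstacle.} The hardest step is the controllability to the trajectory $\tilde y$ on the last interval with cost linear in the quadratically small gap. It requires checking that the Carleman and fixed-point machinery is uniform over coefficients perturbed by a small $\tilde y$. The $C^{1,1}$ Carleman estimate is exactly what handles this. Everything else is bookkeeping using Proposition \ref{th1}.
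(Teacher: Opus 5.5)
\paragraph{Where the gap is.} Your observation that literal subtraction fails is correct, and it is sharper than the paper's own proof. The paper subtracts \eqref{aeq1***} from \eqref{aeq1} and \eqref{aeq1****} from \eqref{aeq1*}, applies Theorem~\ref{th2} to the ``difference equations'', and concludes with the triangle inequality and Proposition~\ref{th1}. Since $y-\tilde y$ does not solve \eqref{aeq1}, that step tacitly needs a trajectory version of Theorem~\ref{th2}, which is exactly the ingredient you isolate. Your bookkeeping on $[0,T/3]$ and $[T/3,2T/3]$ is sound.

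The gap is on the last interval. The glued control $u=\tilde u+u_2\chi_{[2T/3,T]}$ lies in $C^{1+\delta,\frac{1+\delta}{2}}(\overline Q)$ only if $u_2(\cdot,2T/3)=0$, and the tool you propose for $u_2$ cannot give this. The Carleman estimate of Section~\ref{chapter2} uses a weight $\gamma_0$ that is non-singular at the initial time; that is what produces the term $\int_\Omega\lambda^2\theta^2(x,0)p^2(x,0)\,dx$. Consequently the controls of Section~\ref{chapter3}, $u_\varepsilon=\theta^2\lambda^3\gamma_0^3\xi_0p_\varepsilon$, decay at the final time but not at the initial one: $\theta(\cdot,0)>0$ and $\xi_0p_\varepsilon(\cdot,0)\neq0$ in general. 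The remark closing Section~\ref{chapter3} says precisely that these controls cannot be used in the time-splitting scheme. With such a $u_2$, $u$ jumps at $t=2T/3$, hence so does $y_t$. You then lose both $u\in C^{1+\delta,\frac{1+\delta}{2}}(\overline Q)$ and the $C^{3+\delta,\frac{3+\delta}{2}}(\overline Q)$ bound on $y-w$. ``Multiplying by a time cutoff'' does not repair this. The control $\eta(t)u_2$ no longer steers $y$ to $\tilde y(T)$. Gluing a free solution $\bar y$ to a controlled one $\hat y$ through $\eta$ produces the source $\eta'(\hat y-\bar y)$, which is not supported in $\omega$.

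\paragraph{How to repair it.} If you keep your order, you need controls that vanish at the start of $[2T/3,T]$, i.e.\ a weight singular at both ends, as in the construction behind Lemma~\ref{lemma1!} and Remark~\ref{remark2}. For the $\tilde y$-dependent equation this requires a doubly singular Carleman estimate with $C^{1,1}$ coefficients, an energy estimate to recover the initial value, and the decay analysis of Remark~\ref{remark2}. That is substantial work, not bookkeeping.

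A cheaper fix is to reverse the two corrections.
\begin{enumerate}
\item On $[0,T/3]$, steer the quasi-linear state from $y_0$ to $\tilde y(T/3)$ with a control $u_1$ satisfying $|u_1-\tilde u|\le C|y_0-\tilde y_0|$. Build it with the machinery of Sections~\ref{chapter2}--\ref{chapter3} applied to $z=y-\tilde y$. Non-vanishing at $t=0$ is harmless there, and $u_1-\tilde u$ does decay at $t=T/3$.
\item Apply $v=u_1$ to \eqref{aeq1*}. Proposition~\ref{th1}, used twice, gives $|w(T/3)-\tilde w(T/3)|\le C\varepsilon^2$.
\item Afterwards keep $u=\tilde u$ and correct only the linear system, using the controls of Lemma~\ref{lemma1!}, which vanish at both ends. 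All differences are then $O(\varepsilon^2)$ by the bookkeeping you already have.
\end{enumerate}

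Alternatively, drop the splitting. Run the comparison of Theorem~\ref{t3} on $[0,T]$ between the linearized $z$-equation and the equation for $w-\tilde w$. Their coefficient discrepancies are $O(|\tilde y|_{C^{2,0}(\overline Q)}+|z|_{C^{2,0}(\overline Q)})$, using {\bf (H)} for the terms coming from $f$. This is what the paper's reduction really requires. It gives $|u-v|\le C(|\tilde y_0|+|\tilde u|+|y_0-\tilde y_0|)\,|y_0-\tilde y_0|$, which suffices for the stated bound.

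In both routes, first symmetrize the second-order part generated by $\sum_{i,j=1}^n\big([a^{ij}(\tilde y+z,\nabla\tilde y+\nabla z)-a^{ij}(\tilde y,\nabla\tilde y)]\tilde y_{x_i}\big)_{x_j}$. The Carleman estimate is stated only for symmetric $b^{ij}$.
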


The remainder  of this paper is organized as follows.   \Cref{chapter4} is devoted to providing  the
indirect method to prove Theorem   \ref{th2}.  \Cref{chapter2}  establishes  global  Carleman
estimates for linear parabolic equations with $C^{1, 1}$-principal part coefficients.  \Cref{chapter3} presents the direct method for  the alternate  proof of \Cref{th2}.

\section{The indirect method for  controllability approximation}\label{chapter4}
In this section, we provide  an indirect
 proof of Theorem \ref{th2}.
  The proof relies on the  local
   null controllability results for the quasi-linear system \eqref{aeq1}
    and the linear system \eqref{aeq1*},  alongside  the approximation result given
     in  \Cref{th1}.

    First, we give a proof of Proposition \ref{th1}.
    \begin{proof}[Proof of \Cref{th1}]
	Let $y$ and $w$ be the  solutions to the systems (\ref{aeq1}) and (\ref{aeq1*}) corresponding to $v=u$,  respectively.
	Set $z=y-w$ and $\widehat{a}^{i j}=a^{i j}-\tilde{a}^{i j}$. Then $z$ satisfies
	\begin{eqnarray*}
		\left\{
		\begin{array}{ll}
			z_t-\sum\limits_{i, j=1}^{n} (\tilde{a}^{i j} z_{x_i})_{x_j}
			=\sum\limits_{i, j=1}^{n} (\widehat{a}^{i j}(y, \nabla y) y_{x_i})_{x_j}-f(y, \nabla y) &\mbox{ in }Q, \\ \ns
			z=0 &\mbox{ on }\Sigma, \\ \ns
			z(x, 0)=0 &\mbox{ in }\Omega.\\ \ns
		\end{array}
		\right.
	\end{eqnarray*}
	
	By the well-posedness results  for linear parabolic equations (\cite[Theorem 5.2 on Page 320]{La} and \cite[Theorem 4.28 on Page 77]{Li}),  we have
	\begin{eqnarray*}
		&&|y-w|_{C^{3+\delta, \frac{3+\delta}{2}}(\overline{Q})}=|z|_{C^{3+\delta, \frac{3+\delta}{2}}(\overline{Q})}\\
		&&\leq C\big|\sum\limits_{i, j=1}^{n} (\widehat{a}^{i j}(y, \nabla y) y_{x_i})_{x_j} \big|_{C^{1+\delta, \frac{1+\delta}{2}}(\overline{Q})}
		+C\big|f(y, \nabla y)\big|_{C^{1+\delta, \frac{1+\delta}{2}}(\overline{Q})}.
	\end{eqnarray*}

	By the definition of H\"older's space,  it holds that
	$$
	|h_1 h_2|_{C^{1+\delta, \frac{1+\delta}{2}}(\overline{Q})}\leq C|h_1|_{C^{1+\delta, \frac{1+\delta}{2}}(\overline{Q})}
	|h_2|_{C^{1+\delta, \frac{1+\delta}{2}}(\overline{Q})}, \quad\forall\ h_1, h_2\in C^{1+\delta, \frac{1+\delta}{2}}(\overline{Q}).
	$$
	Noting that $f(0, {\bf 0})=0$,  $\nabla f(0, {\bf 0})={\bf 0}$ and $\widehat{a}^{i j}(0, {\bf 0})=0$ $(i, j=1, \cdots, n)$,
	we deduce
	
	\begin{eqnarray*}
		&&|y-w|_{C^{3+\delta, \frac{3+\delta}{2}}(\overline{Q})}\\
		&&\leq C\sum\limits_{i, j=1}^{n}
		\left[\big|\widehat{a}^{i j}(y, \nabla y) \big|_{C^{1+\delta, \frac{1+\delta}{2}}(\overline{Q})}
		\big|y_{x_i x_j} \big|_{C^{1+\delta, \frac{1+\delta}{2}}(\overline{Q})}
		+\big|\big(\widehat{a}^{i j}(y, \nabla y)\big)_{x_j} \big|_{C^{1+\delta, \frac{1+\delta}{2}}(\overline{Q})}
		\big|y_{x_i} \big|_{C^{1+\delta, \frac{1+\delta}{2}}(\overline{Q})}\right]\\
		&&\quad+C\bigg[
		\Big|\int^1_0 f_s(\tau y, \tau \nabla y)d\tau\Big|_{C^{1+\delta, \frac{1+\delta}{2}}(\overline{Q})}
		|y|_{C^{1+\delta, \frac{1+\delta}{2}}(\overline{Q})}\\
		&&\quad\quad\quad\quad
		+\Big|\int^1_0 \nabla_\zeta f(\tau y, \tau \nabla y)d\tau\Big|_{C^{1+\delta, \frac{1+\delta}{2}}(\overline{Q})}
		|\nabla y|_{C^{1+\delta, \frac{1+\delta}{2}}(\overline{Q})}\bigg]
	\end{eqnarray*}
	\begin{eqnarray*}
		&&\leq C|y|_{C^{3+\delta, \frac{3+\delta}{2}}(\overline{Q})}
		\Bigg\{\sum\limits_{i, j=1}^{n} \big|\widehat{a}^{i j}(y, \nabla y)-\widehat{a}^{i j}(0, {\bf 0}) \big|_{C^{1+\delta, \frac{1+\delta}{2}}(\overline{Q})}
		+\sum\limits_{i, j=1}^{n}\big|\widehat{a}^{i j}_s(y, \nabla y)y_{x_j} \big|_{C^{1+\delta, \frac{1+\delta}{2}}(\overline{Q})}\\
		&&\quad\quad\quad+
		\sum\limits_{i, j=1}^{n}\big|\nabla_{\zeta}\widehat{a}^{i j}(y, \nabla y)\cdot \nabla y_{x_j} \big|_{C^{1+\delta, \frac{1+\delta}{2}}(\overline{Q})}
		+\Big|\int^1_0 [f_s(\tau y, \tau \nabla y)-f_s(0, {\bf 0})] d\tau\Big|_{C^{1+\delta, \frac{1+\delta}{2}}(\overline{Q})}\\
		&&\quad\quad\quad+\Big|\int^1_0 [\nabla_\zeta f(\tau y, \tau \nabla y)- \nabla_{\zeta}
		f(0, {\bf 0})]d\tau\Big|_{C^{1+\delta, \frac{1+\delta}{2}}(\overline{Q})}\Bigg\}\\
		&&\leq  C|y|^2_{C^{3+\delta, \frac{3+\delta}{2}}(\overline{Q})}.
	\end{eqnarray*}

	Furthermore, by the local well-posedness results  for the quasi-linear parabolic equation (\ref{aeq1}),  it follows that
	$$
	|y|_{C^{3+\delta, \frac{3+\delta}{2}}(\overline{Q})}\leq
	C\Big(|y_0|_{C^{3+\delta}(\overline{\Omega})}+|u|_{C^{1+\delta, \frac{1+\delta}{2}}(\overline{Q})}\Big).
	$$
	Therefore,
	$$
	|y-w|_{C^{3+\delta, \frac{3+\delta}{2}}(\overline{Q})}
	\leq  C\Big(|y_0|^2_{C^{3+\delta}(\overline{\Omega})}+|u|^2_{C^{1+\delta, \frac{1+\delta}{2}}(\overline{Q})}\Big).
	$$
	This completes the proof of  Proposition \ref{th1}.
\end{proof}

Next,   we recall the controllability results for the quasi-linear and linear parabolic equations.
By    \cite[Remark 1.5]{DLZ} and the proofs of   \cite[Proposition 4.1 and Theorem 1.2]{DLZ}, we have the following
 local null controllability result for
the quasi-linear parabolic system  \eqref{aeq1}  and the linear parabolic system  \eqref{aeq1*}, with  controls in $C^{1+\delta,\frac{1+\delta}{2}}(\overline Q)$ for $\delta\in(0,1)$.
\begin{lemma}\label{lemma1!}
There exists a positive constant $\rho^*$ such that for any  $y_0\in C^{3+\delta}_0(\Omega)$ with
$|y_0|_{C^{3+\delta}(\overline{\Omega})}\leq \rho^*$,  one can find two controls
$u^*, v^*\in C^{1+\delta, \frac{1+\delta}{2}}(\overline{Q})$ so that the corresponding solutions $y$ and $w$
 to $(\ref{aeq1})$  and $(\ref{aeq1*})$
satisfy $y(\cdot, T; u^*)=w(\cdot, T; v^*)=0$ in $\Omega$. Moreover,
$$
|u^*|_{C^{1+\delta, \frac{1+\delta}{2}}(\overline{Q})}+|v^*|_{C^{1+\delta, \frac{1+\delta}{2}}(\overline{Q})}\leq C|y_0|_{L^2(\Omega)}.
$$
\end{lemma}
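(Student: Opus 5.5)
The plan is to prove the two assertions separately. The linear one comes from a Carleman/duality construction that produces smooth controls. The quasi-linear one then follows from a fixed-point argument built on a \emph{uniform} version of that construction for the linearized operators. Throughout I will take the compatibility conditions for granted, since $y_0\in C_0^{3+\delta}(\Omega)$.

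\emph{Step 1 (linear system with variable coefficients).} Consider
$w_t-\sum_{i,j}(b^{ij}w_{x_i})_{x_j}+c\cdot\nabla w+dw=\xi_0 v$, where the coefficients lie in a bounded set $\mathcal B$ of $C^{1+\delta,\frac{1+\delta}{2}}(\overline Q)$ and $(b^{ij})$ is uniformly elliptic. A global Carleman estimate for the adjoint $p$ with weight $\theta=e^{\lambda\varphi}/(t(T-t))$ gives an observability inequality, and the constant is uniform over $\mathcal B$. I will then choose $v$ by minimizing a weighted functional, which yields the representation
\[
v=-\xi_0\,\theta^{-m}e^{-2s\alpha}p
\]
for a suitable power $m$, with $p$ solving the adjoint equation. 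Observability gives
\[
\|\theta^{-m/2}e^{-s\alpha}p\|_{L^2(\omega\times(0,T))}\le C|y_0|_{L^2(\Omega)}.
\]
To reach the H\"older class, I will use local parabolic regularity for $p$ on $\omega\times(0,T)$, bootstrapping $W^{2,1}_q$ estimates and then Schauder estimates on shrinking cylinders. Because the weight $e^{-2s\alpha}$ and all its derivatives vanish to infinite order at $t=0$ and $t=T$, this absorbs the polynomial blow-up of the local estimates near the time endpoints. The outcome is $|v|_{C^{1+\delta,\frac{1+\delta}{2}}(\overline Q)}\le C|y_0|_{L^2(\Omega)}$, uniformly in $\mathcal B$. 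With constant coefficients $\tilde a^{ij}$ this gives $v^*$ for \eqref{aeq1*}.

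\emph{Step 2 (quasi-linear system).} Fix $R=K\rho^*$ and let $\mathcal K$ be the closed convex set of $z$ with $|z|_{C^{3+\delta,\frac{3+\delta}{2}}(\overline Q)}\le R$, regarded as a subset of $C^{3+\delta',\frac{3+\delta'}{2}}(\overline Q)$ for some $\delta'<\delta$. For $z\in\mathcal K$, expand the principal part and write
\[
f(z,\nabla z)=\Big(\int_0^1 f_s(\tau z,\tau\nabla z)\,d\tau\Big)z+\Big(\int_0^1\nabla_\zeta f(\tau z,\tau\nabla z)\,d\tau\Big)\cdot\nabla z .
\]
This defines a linear operator with coefficients $b^{ij}=a^{ij}(z,\nabla z)$ and lower-order terms, all lying in a fixed set $\mathcal B$. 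Step 1 then gives the unique minimizing control $u_z$ with $|u_z|\le C|y_0|_{L^2}$. By Schauder theory (\cite[Theorem 5.2 on Page 320]{La}) the associated state satisfies $|y_z|_{C^{3+\delta,\frac{3+\delta}{2}}}\le C(|y_0|_{C^{3+\delta}}+|u_z|)\le C\rho^*$, so $y_z\in\mathcal K$ once $\rho^*$ is small relative to $K$. Define $\Lambda(z)=y_z$. Then $\Lambda(\mathcal K)$ is compact in the $\delta'$-topology because $C^{3+\delta}$ embeds compactly into $C^{3+\delta'}$. A fixed point of $\Lambda$, obtained from Schauder's theorem, solves \eqref{aeq1} with $u^*=u_{y}$ and satisfies $y(\cdot,T)=0$.

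\emph{Main obstacle.} The hardest step is the continuity of $z\mapsto u_z$, together with the uniformity in Step 1. For continuity, I would argue as follows. Take $z_k\to z$ in $C^{3+\delta'}$. The minimizers are bounded uniformly, so they converge along a subsequence, for instance weakly in the weighted $L^2$ space and, after the regularity step, strongly in a lower H\"older norm. The limit is admissible for $z$. By strict convexity and uniqueness of the minimizer, it must equal $u_z$, so the whole sequence converges. Uniformity requires tracking that the Carleman parameters $s,\lambda$ and the interior Schauder constants depend only on the $C^{1+\delta}$ bounds of the coefficients and on $\rho_0$, which is exactly why $\mathcal B$ is taken bounded in that norm.
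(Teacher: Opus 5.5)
Your Step~1 follows the same pattern as the construction in \cite{DLZ} that the paper invokes for this lemma: weights vanishing to infinite order at $t=0$ and $t=T$, a weighted optimal control, and a regularity bootstrap in which that vanishing absorbs the endpoint blow-up (cf.\ Remark~\ref{remark2}). As a sketch it is acceptable.

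The genuine gap is the continuity of $z\mapsto u_z$ in Step~2. Schauder's theorem needs it, because $\Lambda$ must be single-valued and continuous. From $z_k\to z$ you correctly extract $u_{z_k}\to\bar u$ with $\bar u$ \emph{admissible} for $z$: it obeys the bound and its $z$-state vanishes at $T$. But uniqueness of the minimizer identifies $\bar u$ with $u_z$ only if $\bar u$ \emph{minimizes} the $z$-functional, and admissibility does not give that. Lower semicontinuity yields $J_z(\bar u)\le\liminf_k J_{z_k}(u_{z_k})$. To conclude you also need $\limsup_k J_{z_k}(u_{z_k})\le J_z(u_z)$, that is, a recovery sequence of $z_k$-admissible controls converging to $u_z$ with converging cost. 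The admissible set $\{u:\ y^z_u(T)=0\}$ is an affine subspace of infinite codimension that moves with $z$, so $u_z$ is not admissible for $z_k$. Correcting it is itself a null-controllability problem for the $z_k$-system, with source equal to the difference of the two linearized operators (including second-order terms) applied to the $z$-state of $u_z$. Solving it with vanishing cost requires weighted decay of that state near $t=T$, which you have not established. In the dual formulation the same difficulty appears as a minimizer living in a completion space that depends on $z$.

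The repair is cheap, and it is the route of the references the paper relies on; the fixed point in Section~\ref{chapter3} follows \cite{Liu,Liux}. Apply Kakutani's theorem to the set-valued map that sends $z\in\mathcal K$ to the set of states of all $z$-admissible controls $u$ with $|u|_{C^{1+\delta,\frac{1+\delta}{2}}(\overline Q)}\le C|y_0|_{L^2(\Omega)}$. Its values are nonempty by Step~1, convex because the $z$-system is affine in $u$, and compact. The only continuity Kakutani needs is a closed graph, and that is exactly what your compactness argument proves.

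Alternatively, use penalized problems: drop the constraint, add $\frac1{2\varepsilon}|y(T)|^2_{L^2(\Omega)}$, and work on a fixed Hilbert space, where the minimizer does depend continuously on $z$. Obtain fixed points with $\varepsilon$-uniform $C^{1+\delta,\frac{1+\delta}{2}}$ bounds on the controls and let $\varepsilon\to0$. This matches the description $u_\varepsilon\to u^*$ in Remark~\ref{remark2}.

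A minor point: uniformity of the Carleman constants needs a bound on $\partial_t b^{ij}$ (the paper's estimate uses $b^{ij}\in C^{1,1}(\overline Q)$). A bound in $C^{1+\delta,\frac{1+\delta}{2}}(\overline Q)$ does not give this. It does hold for $b^{ij}=a^{ij}(z,\nabla z)$ with $|z|_{C^{3+\delta,\frac{3+\delta}{2}}(\overline Q)}\le R$, so define $\mathcal B$ accordingly.
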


\begin{remark}\label{remark1}
The requirement $y_0\in C^{3+\delta}_0(\Omega)$ can be weakened  to
$y_0\in C^{3+\delta}(\overline{\Omega})$ with
 the  compatibility conditions needed  for the well-posedness of the system
$(\ref{aeq1})$  in the space  $C^{3+\delta, \frac{3+\delta}{2}}(\overline{Q})$.
\end{remark}
\begin{remark}\label{remark2}
The controls $u^*$ $($similarly, $v^*)$ in Lemma $\ref{lemma1!}$ is  indeed
  the limit of a sequence of uniformly bounded functions $u_{\varepsilon}$
 $($similarly, $v_\varepsilon)$ in $C^{1+\delta, \frac{1+\delta}{2}}(\overline{Q})$.
 This sequence $\{u_{\varepsilon}\}_{\varepsilon>0}$
  is the product of  a $C^{1+\delta, \frac{1+\delta}{2}}(\overline{Q})$-function $m_\varepsilon^{N^*}$ and  an infinitely differentiable function $e^{2\lambda\nu-(\lambda+\delta_{N^*})\nu_0}\beta^7 \beta_0^{-7}\xi_0$ $($which can be found on Page $3799$ in $\cite{DLZ})$, whose derivatives  of all orders decay exponentially to zero near time $0$  and $T$.
Therefore, when the control functions $u_\varepsilon$ and  $v_\varepsilon$
 are extended by zero  to  the time interval $[-\tau,  T+\tau]$ for a $\tau>0$ $($still denoted by the same notations$)$,
they still remain uniformly bounded  in the $C^{1+\delta, \frac{1+\delta}{2}}$-space, i.e.,
\begin{eqnarray*}
&&|u_\varepsilon|_{C^{1+\delta, \frac{1+\delta}{2}}(\overline{\Omega}\times [-\tau, T+\tau])}+
|v_\varepsilon|_{C^{1+\delta, \frac{1+\delta}{2}}(\overline{\Omega}\times [-\tau, T+\tau])}\\
&&\leq C(|u_\varepsilon|_{C^{1+\delta, \frac{1+\delta}{2}}(\overline{\Omega}\times [0, T])}
+|v_\varepsilon|_{C^{1+\delta, \frac{1+\delta}{2}}(\overline{\Omega}\times [0, T])}) \leq C|y_0|_{L^2(\Omega)}.
\end{eqnarray*}
Consequently,
 there exist controls $u^*$ and $v^*$ in
 $C^{1+\delta, \frac{1+\delta}{2}}(\overline{\Omega}\times [-\tau, T+\tau])$ that
 vanish  on  $\Omega\times([-\tau, 0]\cup[T, T+\tau])$ such that
$$u_{\varepsilon}\rightarrow u^*\quad\mbox{and}\quad v_{\varepsilon}\rightarrow v^*\quad
\mbox{in }\quad C^{1+\delta, \frac{1+\delta}{2}}(\overline{\Omega}\times [-\tau, T+\tau]).$$
\end{remark}

Now, we give a proof of Theorem \ref{th2}.

\begin{proof}[Proof of \Cref{th2}]
	We have already concluded about the null controllability for the control systems \eqref{aeq1} and \eqref{aeq1*}. Therefore, in order to establish the \Cref{th2}, we only need to prove the error estimate \eqref{approx_est}.
	For that, we divide the  time interval $[0, T]$ into three segments.

	\medskip
	
	{\bf Step 1. }  The time  interval  $ [0, T/3]$.
	
	\medskip
	
	First,
	by the null controllability result for  the linear parabolic equation (\ref{aeq1*})   in Lemma
	\ref{lemma1},  there exists a control
	$v_1\in C^{1+\delta, \frac{1+\delta}{2}}(\overline{\Omega}\times[0, T/3])$ such that
	the corresponding solution  $w$
	to $(\ref{aeq1*})$
	satisfies $w(\cdot, T/3; v_1)=0$ in $\Omega$, with
	\begin{equation}\label{ena1}
		|v_1|_{C^{1+\delta, \frac{1+\delta}{2}}(\overline{\Omega}\times[0, T/3])}\leq C|y_0|_{L^2(\Omega)}.
	\end{equation}

	Next, for the quasi-linear parabolic equation (\ref{aeq1}),
	we choose $u=v_1$ on $\Omega\times[0, T/3]$.  By  (\ref{ena1}),  there exists a  $\rho_2>0$,  such that
	\begin{eqnarray*}
		|y_0|_{C^{3+\delta}(\overline{\Omega})}+|u|_{C^{1+\delta, \frac{1+\delta}{2}}(\overline{\Omega}\times[0, T/3])}=
		|y_0|_{C^{3+\delta}(\overline{\Omega})}+|v_1|_{C^{1+\delta, \frac{1+\delta}{2}}(\overline{\Omega}\times[0, T/3])}\leq C\rho_2\leq \rho_1.
	\end{eqnarray*}
	Then,  by Proposition \ref{th1} and (\ref{ena1}), the corresponding solutions
	$y$ and $w$ to $(\ref{aeq1})$ and $(\ref{aeq1*})$ with the same control $u=v=v_1$ on $\Omega\times[0, T/3]$ satisfy
	\begin{equation}\label{ena2}
		|y-w|_{C^{3+\delta, \frac{3+\delta}{2}}(\overline{\Omega}\times[0, T/3])}
		\leq C\Big(|y_0|^2_{C^{3+\delta}(\overline{\Omega})}+
		|v_1|^2_{C^{1+\delta, \frac{1+\delta}{2}}(\overline{\Omega}\times[0, T/3])}\Big)\leq C
		|y_0|^2_{C^{3+\delta}(\overline{\Omega})}.
	\end{equation}
	This  implies that
	\begin{equation}\label{ena3}
		|y(\cdot, T/3)|_{C^{3+\delta}(\overline{\Omega})}=
		|y(\cdot, T/3)-w(\cdot, T/3)|_{C^{3+\delta}(\overline{\Omega})}\leq C
		|y_0|^2_{C^{3+\delta}(\overline{\Omega})},
	\end{equation}
	and
	\begin{equation}\label{ena3*}
		|y(\cdot, T/3)|_{C^{3+\delta}(\overline{\Omega})}\leq
		C|y_0|_{C^{3+\delta}(\overline{\Omega})}\leq C\rho_2.
	\end{equation}

	{\bf Step 2. }The time interval  $[T/3,  2T/3]$.
	
	\medskip

	On this interval,
	we set  the controls $u$ and $v$ to zero for both
	the quasi-linear and linear parabolic equations. By Remark \ref{remark2}, for the control $v_1\in C^{1+\delta, \frac{1+\delta}{2}}(\overline{\Omega}\times[0, T/3])$, the zero-extended control
	belongs to  $C^{1+\delta, \frac{1+\delta}{2}}(\overline{\Omega}\times[0, 2T/3])$.

	Next, for the quasi-linear parabolic equation (\ref{aeq1}),
	since $y_0\in C^{3+\delta}_0(\Omega)$ and $v_1\in C^{1+\delta, \frac{1+\delta}{2}}(\overline{\Omega}\times[0, T/3])$ with supp$(\xi_0 v_1)\subseteq \Omega\times [0, T/3]$,
	the associated solution $y$ to
	(\ref{aeq1}) belongs to  $C^{3+\delta, \frac{3+\delta}{2}}(\overline{\Omega}\times[0, T/3])$
	and hence,
	$y(\cdot, T/3)\in C^{3+\delta}(\overline{\Omega})$.  Moreover,
	as the initial value of the solution to
	(\ref{aeq1}) on $\Omega\times[T/3, 2T/3]$, it also satisfies the the  compatibility conditions, since
	the support of the non-homogenous term  is always contained in $\Omega\times[0, T]$.
	
	By the local well-posedness result for  quasi-linear parabolic equations, together with (\ref{ena3}) and (\ref{ena3*}),
	we obtain
	\begin{equation}\label{ena3!}
		|y|_{C^{3+\delta, \frac{3+\delta}{2}}(\overline{\Omega}\times[T/3, 2T/3])}+|y(\cdot, 2T/3)|_{C^{3+\delta}(\overline{\Omega})}\leq C|y(\cdot, T/3)|_{C^{3+\delta}(\overline{\Omega})}\leq C
		|y_0|^2_{C^{3+\delta}(\overline{\Omega})},
	\end{equation}
	and
	\begin{equation}\label{ena3*!}
		|y(\cdot, 2T/3)|_{C^{3+\delta}(\overline{\Omega})}\leq
		C\rho_2.
	\end{equation}

	\medskip

	{\bf Step 3. } The  time interval  $[2T/3, T]$.
	
	\medskip

	First,  for the linear parabolic equation (\ref{aeq1*}), we
	choose $v=0$ on $\Omega\times[2T/3, T]$.
	Since $w(\cdot, T/3)=0$ in $\Omega$, the associated solution $w$ satisfies $w=0$ on $\Omega\times[T/3, T]$, and thus, $w(\cdot, T)=0$ in $\Omega$.
	
	\smallskip

	Next,  by (\ref{ena3*!}), and the local null controllability result for the quasi-linear parabolic equation (\ref{aeq1}) in Lemma \ref{lemma1},   for a sufficiently small
	$\rho_2$,
	there exists a control $u_2\in
	C^{1+\delta, \frac{1+\delta}{2}}(\overline{\Omega}\times[2T/3, T])$, such that  the associated solution $y$ to (\ref{aeq1}) satisfies
	$y\in C^{3+\delta, \frac{3+\delta}{2}}(\overline{\Omega}\times[2T/3, T])$ and
	$y(\cdot, T)=0$ in $\Omega$. Moreover,  by (\ref{ena3!}), it holds that
	\begin{equation}\label{ena4}
		|u_2|_{C^{1+\delta, \frac{1+\delta}{2}}(\overline{\Omega}\times[2T/3, T])}
		\leq C|y(\cdot,  2T/3)|_{L^2(\Omega)}\leq C|y_0|^2_{C^{3+\delta}(\overline{\Omega})}.
	\end{equation}
	From (\ref{ena3!}) and (\ref{ena4}),  the corresponding solutions
	$y$ and $w$ to $(\ref{aeq1})$ and $(\ref{aeq1*})$  satisfy
	\begin{eqnarray}\label{ena5}
		\begin{array}{rl}
			&\displaystyle|y-w|_{C^{3+\delta, \frac{3+\delta}{2}}(\overline{\Omega}\times[2T/3, T])}=
			|y|_{C^{3+\delta, \frac{3+\delta}{2}}(\overline{\Omega}\times[2T/3, T])}\\[2mm]
			&\displaystyle\leq C\Big(|y(\cdot, 2T/3)|_{C^{3+\delta}(\overline{\Omega})}+
			|u_2|_{C^{1+\delta, \frac{1+\delta}{2}}(\overline{\Omega}\times[2T/3, T])}\Big)\leq
			C|y_0|^2_{C^{3+\delta}(\overline{\Omega})}.
		\end{array}
	\end{eqnarray}
	
	Moreover, by Remark \ref{remark2}, for the control $u_2\in C^{1+\delta, \frac{1+\delta}{2}}(\overline{\Omega}\times[2T/3, T])$, the zero-extended control
	belongs to the space $C^{1+\delta, \frac{1+\delta}{2}}(\overline{\Omega}\times[T/3, T])$.

	\medskip
	
	{\bf Step 4. }
	In conclusion, we define the global controls as
	$$
	u^*(x, t)=\left\{
	\begin{array}{ll}
		v_1 &(x, t)\in \Omega\times[0, T/3],\\
		0 &(x, t)\in \Omega\times[T/3, 2T/3],\\
		u_2 &(x, t)\in \Omega\times [2T/3, T],
	\end{array}
	\right.
	\quad
	v^*(x, t)=\left\{
	\begin{array}{ll}
		v_1 &(x, t)\in \Omega\times[0, T/3],\\
		0 &(x, t)\in \Omega\times [T/3, T].
	\end{array}
	\right.
	$$
By Remark \ref{remark2},
	$u^*, v^*\in C^{1+\delta, \frac{1+\delta}{2}}(\overline{\Omega}\times[0, 2T/3])\cap
	C^{1+\delta, \frac{1+\delta}{2}}(\overline{\Omega}\times[T/3, T])$, which indicates
	$u^*, v^*\in C^{1+\delta, \frac{1+\delta}{2}}(\overline{Q})$, satisfying
	\begin{eqnarray*}
		&&|u^*|_{C^{1+\delta, \frac{1+\delta}{2}}(\overline{Q})}\leq C\big(|v_1|_{C^{1+\delta, \frac{1+\delta}{2}}(\overline{\Omega}\times[0, T/3])}+|u_2|_{C^{1+\delta, \frac{1+\delta}{2}}(\overline{\Omega}\times[2T/3, T])}\big),\\
		&&\mbox{and }\ |v^*|_{C^{1+\delta, \frac{1+\delta}{2}}(\overline{Q})}\leq C
		|v_1|_{C^{1+\delta, \frac{1+\delta}{2}}(\overline{\Omega}\times[0, T/3])}.
	\end{eqnarray*}
	Therefore,   the corresponding solutions
	$y$ and $w$ to $(\ref{aeq1})$ and $(\ref{aeq1*})$  satisfy $y,  w\in C^{3+\delta, \frac{3+\delta}{2}}(\overline{Q})$. Moreover,
	$$
	y(\cdot, T; u^*)=w(\cdot, T; v^*)=0\quad\mbox{ in }\Omega.
	$$
	Furthermore, (\ref{ena2}), (\ref{ena3!}), (\ref{ena4}) and (\ref{ena5}) imply
	\begin{eqnarray*}
		&&|y-w|_{C^{3+\delta, \frac{3+\delta}{2}}(\overline{Q})}\\
		&&\leq C
		\Big(|y-w|_{C^{3+\delta, \frac{3+\delta}{2}}(\overline{\Omega}\times[0, T/3])}
		+|y-w|_{C^{3+\delta, \frac{3+\delta}{2}}(\overline{\Omega}\times[T/3, 2T/3])}+
		|y-w|_{C^{3+\delta, \frac{3+\delta}{2}}(\overline{\Omega}\times[2T/3, T])}\Big)\\
		&&\leq C|y_0|^2_{C^{3+\delta}(\overline{\Omega})},
	\end{eqnarray*}
	and
	$$
	|u^*-v^*|_{C^{1+\delta, \frac{1+\delta}{2}}(\overline{Q})}\leq C|u_2|_{C^{1+\delta, \frac{1+\delta}{2}}(\overline{\Omega}\times[2T/3, T])}\leq C|y_0|^2_{C^{3+\delta}(\overline{\Omega})}.
	$$
	This finishes the proof of Theorem \ref{th2}.
\end{proof}

Finally, we give a proof of Corollary \ref{coro}.

\begin{proof}[Proof of \Cref{coro}]
First, we consider the differences between the quasi-linear parabolic  equations (\ref{aeq1}) and (\ref{aeq1***}), and between the linear parabolic equations (\ref{aeq1***}) and (\ref{aeq1****}), respectively.

We apply the null controllability approximation result from Theorem \ref{th2} to  the resulting difference equations. Then, there exists a $\rho_3>0$ such that for any initial value $y_0, \tilde{y}_0\in
C^{3+\delta}(\overline\Omega)$ satisfying the  compatibility conditions  and
  $|y_0-\tilde{y}_0|_{C^{3+\delta}(\overline{\Omega})}\leq \rho_3$, one can find two controls
 $u, v\in C^{1+\delta, \frac{1+\delta}{2}}(\overline{Q})$ so that
 $$
 y(\cdot, T; u)=\tilde{y}(\cdot, T)\quad \mbox{and} \quad
 w(\cdot, T; u)=\tilde{w}(\cdot, T)\quad\mbox{ in }\Omega.
 $$
Moreover,
$$
|u-v|_{C^{1+\delta, \frac{1+\delta}{2}}(\overline{Q})}
+|(y-\tilde{y})-(w-\tilde{w})|_{C^{3+\delta, \frac{3+\delta}{2}}(\overline{Q})}\leq C
|y_0-\tilde{y}_0|^2_{C^{3+\delta}(\overline{\Omega})},
$$
which, together with  Proposition \ref{th1},   indicates
\begin{eqnarray*}
&&|y-w|_{C^{3+\delta, \frac{3+\delta}{2}}(\overline{Q})}\leq
|\tilde{y}-\tilde{w}|_{C^{3+\delta, \frac{3+\delta}{2}}(\overline{Q})}+
C
|y_0-\tilde{y}_0|^2_{C^{3+\delta}(\overline{\Omega})}\\
&&\leq C\Big(|\tilde{y}_0|^2_{C^{3+\delta}(\overline{\Omega})}
+|\tilde{u}|^2_{C^{1+\delta, \frac{1+\delta}{2}}(\overline{Q})}
+|y_0-\tilde{y}_0|^2_{C^{3+\delta}(\overline{\Omega})}\Big)\\
&&\leq C\Big(|y_0|^2_{C^{3+\delta}(\overline{\Omega})}+
|\tilde{y}_0|^2_{C^{3+\delta}(\overline{\Omega})}
+|\tilde{u}|^2_{C^{1+\delta, \frac{1+\delta}{2}}(\overline{Q})}
\Big).
\end{eqnarray*}
This finishes the proof of Corollary \ref{coro}.
\end{proof}

\section{Carleman estimates for linear parabolic equations}\label{chapter2}
To establish the connection between the controllability of \eqref{aeq1} and \eqref{aeq1*} via a
direct method, we
 first develop global Carleman estimates for linear parabolic equations with $C^{1, 1}$ principal
 part coefficients. To this end, we consider the following  parabolic equation:
\begin{eqnarray}\label{e1}
	\left\{
	\begin{array}{ll}
		p_t+\sum\limits_{i, j=1}^n \left(b^{i j} p_{x_i}\right)_{x_j}
		=F   &\mbox{ in   }Q,\\[2mm]
		p=0  &\mbox{ on  }\Sigma,\\[2mm]
		p(x, T)=p_T(x)  &\mbox{ in  }\Omega,
	\end{array}
	\right.
\end{eqnarray}
where   $p_T\in L^2(\Omega)$,
$F\in L^2(Q)$,
and
$b^{i j}\in C^{1, 1}(\overline{Q})$ with
$b^{i j}=b^{j i}$ $(i, j=1, \cdots, n)$ satisfying
\begin{align}\label{b_ij_cond}
	\sum\limits_{i, j=1}^{n} b^{i j}(x, t)\eta_{i}\eta_{j}
	\geq \rho_0 |\eta|^2,\quad\quad\forall\ (x, t, \eta)=(x, t, \eta_1, \cdots, \eta_n)\in
	\overline{Q}\times\mathbb{R}^n,
\end{align}
for the positive constant $\rho_0$.



In order to establish Carleman estimates, let us first introduce some required weight functions.
By Lemma 5.1 in \cite{TZ}  and  arguments similar to those in \cite{FI, WW}, there exists a function $\psi\in C^4(\overline{\Omega})$ such that
\begin{align}\label{psi}
	0<\psi(x)\leq 1  \   \mbox{ in  }\Omega, \quad
	\psi(x)=0  \    \mbox{ on  }\Gamma,   \quad
	\mbox{ and  } \quad
	|\nabla\psi(x)|>0\  \mbox{ in }\overline{\Omega\setminus\tilde{\omega}_0},
\end{align}
where $\tilde{\omega}_0$ is an open subset  satisfying
$\overline{\tilde{\omega}_0}\subseteq \omega_0$.
Without loss of generality, assume that  $T<1$.
For any $\lambda, \mu\geq 1$ and  $\sigma_0=\lambda\mu^2 e^{2\mu}$, define the weight function $\gamma_0\in C^2([0, T))$ by
\begin{eqnarray}\label{gamma}
\gamma_0(t)=\left\{
\begin{array}{ll}
\displaystyle
1+\Big(1-\frac{4t}{T}\Big)^{\sigma_0} &t\in [0, T/4],\\
\displaystyle 1 &t\in [T/4, T/2],\\
\displaystyle\frac{1}{T-t} &t\in [3T/4,  T),
\end{array}
\right.
\end{eqnarray}
such that $\gamma_0$ is monotonically increasing on $[T/2,  3T/4]$,  and is independent of $\lambda$
and $\mu$ on this interval.
Note that the first-order derivative $\gamma_0'$ of it is given by
\begin{eqnarray}\label{gamma'_first}
	\gamma_0'(t)
	= \left\{
	\begin{array}{ll}
	\displaystyle -\frac{4}{T}\sigma_0 \Big(1-\frac{4t}{T}\Big)^{\sigma_0-1} &t\in [0,T/4], \\
		0 & t\in [T/4,T/2], \\
		\displaystyle\frac{1}{(T-t)^2} & t\in [3T/4,T),
	\end{array}\right.
\end{eqnarray}
 $\gamma_0'(t)\leq 0$ for $t\in [0,T/4]$,  and $\gamma_0'(t)\geq0$ for $t\in(T/4,T)$.
Moreover, there exists a positive constant $C$, independent of $\lambda$ and $\mu$, such that
on $[0, T)$,
\begin{align}\label{gamma_prop}
\begin{cases}
	\quad \gamma_0(t) \geq 1, &\\
	\quad \left|\gamma_0'(t)\right| \leq C \sigma_0\gamma_0^2(t) = C\lambda\mu^2 e^{2\mu}\gamma_0^2(t). &
\end{cases}
\end{align}
We introduce the  weight functions:
$$
\alpha(x, t)= \big\{e^{\mu[\psi(x)+6]}-
\mu e^{12\mu}\big\}\gamma_0(t),\quad
\varphi(x,t) = e^{\mu[\psi(x)+6]}\gamma_0(t)\quad\mbox{and}\quad
\theta(x, t)=e^{\lambda\alpha(x, t)},
$$
and set $\Lambda:= \sum\limits_{i, j=1}^{n}   |b^{i j}|^2_{C^{1, 1}(\overline{Q})}$.

Let us now state the  first  Carleman estimate for  the adjoint system \eqref{e1}.
\begin{proposition}\label{lemma1}
	There exist  positive constants $\mu_0$,   $\lambda_0$ and $C_0$ such that for any $\mu\geq \mu_0$ and $\lambda\geq \lambda_0$, any solution $p$ to $(\ref{e1})$  satisfies
\begin{eqnarray}\label{carleman-1}
\begin{array}{rl}
	&\displaystyle \int_\Omega
	\lambda^2\mu^3 e^{14\mu}
	\theta^2(x, 0) p^2(x, 0) dx
	+\int_Q \theta^2\Big(
	\lambda^3\mu^4\varphi^3 p^2+\lambda\mu^2\varphi |\nabla p|^2\Big) dxdt\\
	&\quad\displaystyle +\int_Q \theta^2 \lambda^{-1}\varphi^{-1}\Big(
	|p_t|^2+ \sum\limits_{i, j=1}^{n}\big|p_{x_i x_j}\big|^2\Big) dxdt\\
	&\displaystyle
	 \leq C_0
	\Big(\int^{T}_{0}\int_{\omega_0}\theta^2\lambda^3\mu^4 e^{2\mu}\varphi^3 p^2dxdt+
	\int_Q\theta^2 F^2dxdt\Big),
\end{array}
\end{eqnarray}
for all $p_T\in L^2(\Omega)$, where the constant $C_0$ depends on $\rho_0$,  $\Omega$,  $\omega_0$, $n$, $T$, $\lambda_0$, $\mu_0$ and $\Lambda$, but is independent of $\lambda$ and $\mu$.
\end{proposition}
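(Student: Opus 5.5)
The plan is the standard conjugation argument of Fursikov--Imanuvilov type, adapted to the weight $\gamma_0$, which is nonsingular at $t=0$. Set $\ell=\lambda\alpha$, $\theta=e^{\ell}$ and $h=\theta p$. Since $\alpha<0$ (because $e^{\mu(\psi+6)}\le e^{7\mu}<\mu e^{12\mu}$) and $\gamma_0(t)\to+\infty$ as $t\to T$, the function $h$ and its derivatives vanish at $t=T$ for any $p_T\in L^2(\Omega)$, after a density argument with smooth $p_T$. Thus the only time-boundary contribution comes from $t=0$. I would write
$$\theta\Big(p_t+\sum_{i,j}(b^{ij}p_{x_i})_{x_j}\Big)=I_1+I_2+R,$$
where
$$I_1=h_t-2\sum_{i,j}b^{ij}\ell_{x_i}h_{x_j}-2\sum_{i,j}(b^{ij}\ell_{x_i})_{x_j}h,\qquad I_2=\sum_{i,j}(b^{ij}h_{x_i})_{x_j}+\sum_{i,j}b^{ij}\ell_{x_i}\ell_{x_j}h-\ell_t h,$$
and $R$ collects the lower-order terms. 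Then $\int_Q|\theta F|^2\ge 2\int_Q I_1I_2-C\int_Q R^2$, and I expand $2\int_Q I_1I_2$ by integration by parts.

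Only first derivatives of $b^{ij}$ in $x$ and $t$ should appear, with the exception of the terms $(b^{ij}\ell_{x_i})_{x_j}$. These involve $\nabla b^{ij}$, which is Lipschitz, and they are differentiated once more only against $h^2$, where a weak (Lipschitz) derivative suffices. This is exactly why $b^{ij}\in C^{1,1}(\overline Q)$ is enough: I would arrange every cross term so that no second mixed space-time derivative of $b^{ij}$ is ever needed, which is the point of difference from \cite{zxl}.

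The interior terms are treated as usual. The choice of $\psi$ in \eqref{psi} gives the dominant positive contributions
$$\lambda^3\mu^4\varphi^3 h^2\quad\text{and}\quad \lambda\mu^2\varphi|\nabla h|^2$$
outside $\tilde\omega_0$; the missing part on $\omega_0$ is moved to the right-hand side. The spatial boundary terms on $\Sigma$ have a sign because $h=0$ there and $\partial_\nu\psi\le 0$. The time-derivative terms of the weight are controlled through \eqref{gamma_prop}. For example, $|\ell_t|\le C\lambda\mu e^{12\mu}\sigma_0\gamma_0^2=C\lambda^2\mu^3e^{14\mu}\gamma_0^2$, whose square is dominated by $\lambda^3\mu^4\varphi^3\ge\lambda^3\mu^4e^{18\mu}\gamma_0^3$ once $\mu$ is large. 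Near $t=0$ the second derivative satisfies $|\gamma_0''|\le C\sigma_0^2$, producing terms of size $\lambda^3\mu^5e^{16\mu}$, which are again absorbed by $\lambda^3\mu^4e^{18\mu}$. I expect this bookkeeping to be the main obstacle: checking that every term involving $\gamma_0'$ and $\gamma_0''$ is absorbable uniformly in $\lambda$ and $\mu$, since $\sigma_0$ itself depends on $\lambda$ and $\mu$. The ordering $e^{18\mu}$ versus $e^{16\mu}$ is what makes it work, and I would first verify each such term against this scale.

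The term at $t=0$ coming from $\int_Q h_t\,(-\ell_t h)$ and related pieces equals
$$\tfrac12\int_\Omega\ell_t(x,0)\,h^2(x,0)\,dx-(\text{gradient terms at }t=0).$$
Since $\gamma_0'(0)=-4\sigma_0/T$ and $e^{\mu(\psi+6)}-\mu e^{12\mu}\approx-\mu e^{12\mu}$, we get
$$\ell_t(x,0)\ge c\,\lambda\mu e^{12\mu}\sigma_0=c\,\lambda^2\mu^3e^{14\mu}>0,$$
which yields the first term on the left of \eqref{carleman-1}. The accompanying term $\int_\Omega\sum_{i,j} b^{ij}h_{x_i}h_{x_j}(x,0)\,dx$ must either have a favorable sign or be avoided by choosing which terms to integrate by parts in $t$; I would try to keep it with a good sign.

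Returning from $h$ to $p$ gives the first line of \eqref{carleman-1}. Finally, the weighted second-order terms
$$\lambda^{-1}\varphi^{-1}\theta^2\Big(|p_t|^2+\sum_{i,j}|p_{x_ix_j}|^2\Big)$$
are recovered as follows. The $p_t$ term comes directly from $|I_1|^2\le C\int_Q\theta^2F^2+\dots$. The $p_{x_ix_j}$ terms come from $I_2$ by multiplying with $\lambda^{-1}\varphi^{-1}$, using the equation together with an $H^2$ elliptic estimate for $\sum_{i,j}(b^{ij}\cdot_{x_i})_{x_j}$ with Lipschitz coefficients applied to $\lambda^{-1/2}\varphi^{-1/2}h$. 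The commutators created there are bounded by the terms already controlled.
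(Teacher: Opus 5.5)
There is a genuine gap, and it sits in the step you yourself flag as the main obstacle. Your skeleton matches the paper's: conjugation $h=\theta p$, the splitting into $I_1,I_2,R$, positivity from \eqref{psi} off $\tilde\omega_0$, and the initial term from $\ell_t(x,0)\ge c\lambda^2\mu^3e^{14\mu}$. The term $\int_\Omega\sum_{i,j}b^{ij}h_{x_i}h_{x_j}(x,0)\,dx$ in fact enters $2\int_Q I_1I_2$ with a favorable sign, so that worry is unnecessary.

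The problem is that on $[0,T/4]$ the $\gamma_0'$-terms cannot be absorbed by size, because there $|\gamma_0'|$ reaches $\frac4T\sigma_0=\frac4T\lambda\mu^2e^{2\mu}$. Your sample check already fails: $(\lambda^2\mu^3e^{14\mu}\gamma_0^2)^2=\lambda^4\mu^6e^{28\mu}\gamma_0^4$ is not dominated by $\lambda^3\mu^4e^{18\mu}\gamma_0^3$. The decisive term is $2\int_Q\sum_{i,j}(b^{ij}\ell_{x_i})_{x_j}\,\ell_t\,h^2\,dxdt$, which your expansion of $2\int_QI_1I_2$ produces. Near $t=0$ we have $\ell_t\approx\frac4T\lambda^2\mu^3e^{14\mu}$ and $\sum_{i,j}(b^{ij}\ell_{x_i})_{x_j}=\lambda\mu^2\varphi\sum_{i,j}b^{ij}\psi_{x_i}\psi_{x_j}+\lambda\mu\varphi\sum_{i,j}(b^{ij}\psi_{x_i})_{x_j}$. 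So this term has a leading part of size $\lambda^3\mu^5e^{14\mu}\varphi$ and an unsigned part of size $\lambda^3\mu^4e^{14\mu}\varphi$. Since $\varphi^2\le 4e^{14\mu}$ on $[0,T/4]$, both are at least comparable to the main term $\lambda^3\mu^4\varphi^3$, and much larger near $\Gamma$. Your comparison of $e^{18\mu}$ with $e^{16\mu}$ only disposes of $\ell_{tt}$.

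What is missing is the sign. Since $\gamma_0'\le 0$ on $[0,T/4]$ and $\alpha<0$, we get $\ell_t\ge 0$ there. The leading part is therefore an additional positive term $\lambda^2\mu^2\varphi|\nabla\psi|^2\big(\mu e^{12\mu}-e^{\mu(\psi+6)}\big)(-\gamma_0')h^2$ on the left. Your choice of putting $-2\sum_{i,j}(b^{ij}\ell_{x_i})_{x_j}h$ into $I_1$ is exactly what gives it coefficient $+2$ rather than $-2$. The unsigned part carries one fewer power of $\mu$, so it is absorbed by this positive term outside $\tilde\omega_0$. On $\tilde\omega_0$ it must be moved to the right-hand side via $\lambda^2\mu^2e^{12\mu}\varphi(-\gamma_0')\le C\lambda^3\mu^4e^{14\mu}\varphi\le C\lambda^3\mu^4e^{2\mu}\varphi^3$. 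This is precisely the origin of the factor $e^{2\mu}$ in the observation term of \eqref{carleman-1}, which your scheme as described never generates.

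The same large $\ell_t$ defeats your recovery of the second-order terms. Returning from $h$ to $p$ through $\theta p_t=h_t-\ell_t h$ costs $\int_Q\lambda^{-1}\varphi^{-1}\ell_t^2h^2\,dxdt$; so does extracting $\sum_{i,j}(b^{ij}h_{x_i})_{x_j}$ from $I_2$, which contains $-\ell_t h$. Near $t=0$, $\lambda^{-1}\varphi^{-1}\ell_t^2\approx\lambda^3\mu^6e^{28\mu}\varphi^{-1}$, which exceeds even the new positive weight $\lambda^3\mu^5e^{14\mu}\varphi$ by a factor of order at least $\mu$.

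The paper avoids this conversion and works with $p$ directly:
\begin{itemize}
\item it writes $p_t^2+|\sum_{i,j}(b^{ij}p_{x_i})_{x_j}|^2=F^2-2p_t\sum_{i,j}(b^{ij}p_{x_i})_{x_j}$ against the weight $\theta^2\lambda^{-1}\varphi^{-1}$;
\item it integrates by parts in $x$ and then in $t$, discarding the nonpositive boundary term at $t=0$;
\item it uses that $(\theta^2\lambda^{-1}\varphi^{-1})_t\ge 0$ on $[0,T/4]$, again the sign of $\gamma_0'$;
\item elliptic regularity then yields the $p_{x_ix_j}$ terms.
\end{itemize}

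A smaller point: in this paper $C^{1,1}(\overline Q)$ means one continuous derivative in $x$ and one in $t$, not Lipschitz gradients. So differentiating $\sum_{i,j}(b^{ij}\ell_{x_i})_{x_j}$ once more is not available. It suffices to keep only $-2\lambda\mu^2\varphi\sum_{i,j}b^{ij}\psi_{x_i}\psi_{x_j}h$ in $I_1$, which still produces the good sign, and move the remainder into $R$.
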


\begin{proof}
	Throughout the proof, $C_0$ denotes a generic positive constant, which may change from line to line,  but is independent of the  parameters $\lambda$ and $\mu$.
	
	Following an argument similar to that in Proposition 2.1 of \cite{Liux}, we derive the desired Carleman estimate \eqref{carleman-1}. The main difference arises from the terms involving $p(\cdot,0)$ and the derivatives of $\gamma_0$, which are important for obtaining the estimate at the initial time. In particular, from \eqref{gamma'_first}, we have
	$0\leq \gamma_0'(t)\leq \gamma_0^2(t), \text{ for } t\in(T/4,T),$
	whereas in \cite{Liux}, the same estimate holds on the whole interval $(0,T)$. On the other hand,  we have $\gamma_0'(t)\leq 0$, for $t\in[0,T/4]$.
	Thus, the argument in \cite{Liux} needs to be modified for $t\in[0,T/4]$.

	First, let $q=e^{\lambda\alpha}p$ and  $\ell=\lambda\alpha$. By Lemma 2.1 in \cite{Liux}, we have the following pointwise identity:
	\begin{eqnarray}\label{pt_est}
	\begin{array}{rl}
		&\displaystyle \frac{1}{2} e^{2\lambda\alpha} \Big[p_t+\sum_{i,j=1}^n \left(b^{ij}p_{x_i}\right)_{x_j}\Big]^2 \\
		&\displaystyle\geq \sum_{i,j=1}^n \Big[
		b^{ij}q_{x_i}q_t - \sum_{i'j'=1}^n \Big( 2b^{ij} b^{i'j'} \ell_{x_{i'}}q_{x_i}q_{x_{j'}} -b^{ij} b^{i'j'} \ell_{x_i} q_{x_{i'}} q_{x_{j'}}\Big) - E b^{ij}\ell_{x_i}q^2
		\Big]_{x_j} \\
		& \displaystyle\quad-\frac{1}{2} \Big(\sum_{i,j=1}^n b^{ij}q_{x_i}q_{x_j} - Eq^2\Big)_t + \sum_{i,j=1}^n c^{ij} q_{x_i} q_{x_j}
		-2 \sum_{i,j=1}^n \sum_{i',j'=1}^n \left(b^{ij}q_{x_i}\right)_{x_j} b^{i'j'} \ell_{x_{i'}x_{j'}}q
		+ F q^2,
	\end{array}
	\end{eqnarray}
	where
	\begin{eqnarray*}
	\left\{\!\!\begin{array}{rl}
		&\displaystyle\Psi  = -\sum_{i,j=1}^n \left(b^{ij}_{x_j}\ell_{x_i} + 2b^{ij} \ell_{x_ix_j}\right),\q	E = -\ell_t + \sum_{i, j=1}^n \left(b^{ij}\ell_{x_i}\ell_{x_j}-b^{ij}_{x_j}\ell_{x_i}-b^{ij}\ell_{x_ix_j}\right) - \Psi,\\
		&\displaystyle
		F  = - \frac{1}{2}E_t + \sum_{i, j=1}^n \left[\left(Eb^{ij}\ell_{x_i}\right)_{x_j} - \frac{1}{2} \left(b^{ij}_{x_j}\ell_{x_i}\right)^2 - 2 E b^{ij} \ell_{x_ix_j}\right],\\
		&\displaystyle
		c^{ij} = \frac{1}{2} b^{ij}_t + \sum_{i',j'=1}^{n} \left[ 2b^{ij'} \left(b^{i'j}\ell_{x_{i'}}\right)_{x_{j'}} - \left(b^{ij}b^{i'j'}\ell_{x_{i'}}\right)_{x_{j'}}
		\right].
		\end{array}\right.
	\end{eqnarray*}
	
	Using an argument similar to that in the proof of Proposition 2.1 in \cite{Liux}, and taking into account the differences arising from the choice of $\gamma_0$ discussed at the beginning of the proof, we obtain the following lower bound for $F$:
	\begin{eqnarray}\label{1}
	\begin{array}{rl}
	&\displaystyle F\geq \rho_0^2 |\nabla \psi|^4 \lambda^3\mu^4\varphi^3+
	\rho_0|\nabla \psi|^2  \lambda^2\mu^2  \varphi [\mu e^{12\mu}-e^{\mu(\psi+6)}](-\gamma'_0)\chi_{[0, T/4]}\\[4mm]
	&\displaystyle \quad\quad-C_0\lambda^3\mu^3 \varphi^3
	-C_0\lambda^2\mu  \varphi [\mu e^{12\mu}-e^{\mu(\psi+6)}](-\gamma'_0)\chi_{[0, T/4]},
	\end{array}
	\end{eqnarray}
	where $\chi_{[0, T/4]}$ denotes the characteristic function on the
	interval  $[0, T/4]$. Furthermore,
	\begin{eqnarray}\label{2}
	\begin{array}{rl}
		&\displaystyle \int_Q\left[
		 \sum_{i,j=1}^n c^{ij} q_{x_i} q_{x_j}
		-2 \sum_{i,j=1}^n \sum_{i',j'=1}^n \left(b^{ij}q_{x_i}\right)_{x_j} b^{i'j'} \ell_{x_{i'}x_{j'}}q \right]dxdt\\[6mm]
		&\displaystyle
		 \geq \int_Q \rho_0^2 |\nabla\psi|^2 \lambda\mu^2\varphi |\nabla q|^2 dxdt-C_0\int_Q \lambda\mu\varphi|\nabla q|^2 dxdt
		 -C_0\int_Q \lambda\mu^3\varphi |q||\nabla q| dxdt.
		 \end{array}
	\end{eqnarray}

	Next, using the property \eqref{psi} and the boundary condition $q=0$ on $\Sigma$, the spatial boundary terms give a non-negative contribution as
	\begin{align*}
		\int_\Sigma \sum_{i, j=1}^n \sum_{i', j'=1}^n \left(
		-2 b^{i j} b^{i' j'} \ell_{x_{i'}} q_{x_i}q_{x_{j'}} + b^{i j} b^{i' j'} \ell_{x_i} q_{x_{i'}} q_{x_{j'}} \right) \nu_{j}dSdt\geq 0,
	\end{align*}
	where $\nu_j$ is the $j$-th component of the outward unit normal vector to $\Gamma$.
	Consequently,
	\begin{align}\label{3}
		\int_Q \sum_{i, j=1}^n \left[
		b^{ij}q_{x_i}q_t - \sum_{i', j'=1}^n \left( 2b^{i j} b^{i' j'}
		\ell_{x_{i'}} q_{x_i}q_{x_{j'}} -b^{i j} b^{i' j'} \ell_{x_i} q_{x_{i'}} q_{x_{j'}} - E b^{ij} \ell_{x_i}q^2\right)
		\right]_{x_j}dxdt \geq 0.
	\end{align}
	
	Note that $q(T,x)=0$ for $x\in \Omega$, and therefore the divergence term with respect to the time variable $t$ can be estimated as
	\begin{eqnarray*}
	&&-\frac{1}{2}\int_Q \left[\sum\limits_{i, j=1}^{n} b^{i j}q_{x_i}q_{x_j} dxdt + E q^2 dxdt\right]_t\\
	&&=\frac{1}{2} \int_\Omega \sum\limits_{i, j=1}^{n} b^{i j}(x, 0)q_{x_i}(x, 0)q_{x_j}(x, 0) dx
	-\frac{1}{2} \int_\Omega E(x, 0)q^2(x, 0)dx\\
	&&\geq \frac{1}{2} \int_\Omega \rho_0 |\nabla q(x, 0)|^2 dx-\frac{1}{2}\int_\Omega E(x, 0)q^2(x, 0)dx.
	\end{eqnarray*}
	At $t=0$, we have
	\begin{eqnarray*}
	&&-E(\cdot, 0)\geq
	-\frac{4}{T}\lambda[e^{\mu(\psi+6)}-\mu e^{12\mu}]\cdot \lambda\mu^2 e^{2\mu}	
	-C_0\lambda^2\mu^2 e^{2\mu(\psi+6)}\geq C_0\lambda^2\mu^3 e^{14\mu},
	\end{eqnarray*}
	for sufficiently large $\mu_0$. Hence, the divergence term with respect to $t$ satisfies
	\begin{equation}\label{tt}
	\int_Q  \left[-\frac{1}{2}\sum\limits_{i, j=1}^{n} b^{i j}q_{x_i}q_{x_j} dxdt
	+ E q^2 dxdt\right]_t\geq  C_0\int_\Omega \lambda^2\mu^3 e^{14\mu} q^2(x, 0)dx.	
	\end{equation}

	Integrating the pointwise estimate \eqref{pt_est} over $Q$ and employing estimates \eqref{1}--\eqref{tt}, we obtain
	\begin{eqnarray*}
		&&\int_Q |\nabla \psi|^4 \lambda^3\mu^4\varphi^3 q^2 dxdt
		+ \int_Q  |\nabla\psi|^2 \lambda\mu^2\varphi |\nabla q|^2 dxdt\\
		&&+\int_\Omega \lambda^2 \mu^3 e^{14\mu} q^2(x, 0)dx
		+\int^{T/4}_0\!\!\int_\Omega |\nabla \psi|^2  \lambda^2\mu^2  \varphi \Big[\mu e^{12\mu}-e^{\mu(\psi+6)}\Big](-\gamma'_0)q^2 dxdt\\
		&&\leq C_0 \int_Q \lambda^3\mu^3 \varphi^3 q^2 dxdt+
		C_0\int_Q \lambda\mu\varphi |\nabla q|^2 dxdt\\
		&&\quad+ C_0\int^{T/4}_0\!\!\int_\Omega
		  \lambda^2\mu  \varphi \Big(\mu e^{12\mu}-e^{\mu(\psi+6)}\Big)(-\gamma'_0)q^2 dxdt+C_0\int_Q
		 \theta^2 F^2 dxdt.
	\end{eqnarray*}
	We use the property \eqref{psi} concerning the gradient to conclude the following
 	 \begin{eqnarray*}
		&&\int_Q \lambda^3\mu^4\varphi^3 q^2 dxdt
		+ \int_Q   \lambda\mu^2\varphi |\nabla q|^2 dxdt
		+\int_\Omega \lambda^2 \mu^3 e^{14\mu} q^2(x, 0)dx\\
		&&\leq C_0\int^T_0\int_{\tilde{\omega}_0}  \lambda^3\mu^4 \varphi^3 q^2 dxdt+
		C_0\int^T_0\int_{\tilde{\omega}_0}  \lambda\mu^2\varphi |\nabla q|^2 dxdt\\
		&&\quad+ C_0\int^{T/4}_0\!\!\int_{\tilde{\omega}_0}
		 \lambda^2\mu  \varphi \Big(\mu e^{12\mu}-e^{\mu(\psi+6)}\Big)(-\gamma'_0)q^2 dxdt
		 +C_0\int_Q
		 \theta^2 F^2 dxdt,
	\end{eqnarray*}
	for  sufficiently large $\mu_0$.
	Let us now note that
	$$
	\displaystyle
	\lambda^2\mu  \varphi \cdot \mu e^{12\mu} (-\gamma'_0)
	\leq C_0\lambda^3\mu^4\varphi e^{14\mu}\leq C_0\lambda^3\mu^4 \varphi^3 e^{2\mu} \quad \text{ on } (0,T/4)\times \Omega.
	$$
	Using this in the previous estimate gives
	\begin{eqnarray*}
		&&\int_Q \lambda^3\mu^4\varphi^3 q^2 dxdt
		+ \int_Q   \lambda\mu^2\varphi |\nabla q|^2 dxdt
		+\int_\Omega \lambda^2 \mu^3 e^{14\mu} q^2(x, 0)dx\\
		&&\leq  C_0\int^{T/4}_0\!\!\int_{\tilde{\omega}_0}
		\lambda^3\mu^4 \varphi^3 e^{2\mu} q^2 dxdt+
		C_0\int^T_{T/4}\int_{\tilde{\omega}_0}  \lambda^3\mu^4 \varphi^3 q^2 dxdt\\
		&&\quad+
		C_0\int^T_0\int_{\tilde{\omega}_0}  \lambda\mu^2\varphi |\nabla q|^2 dxdt
		 +C_0\int_Q
		 \theta^2 F^2 dxdt.
	\end{eqnarray*}
	
	Substituting  $q=e^{\lambda\alpha}p$ into this inequality yields	
		\begin{eqnarray}\label{Car1}
		\begin{array}{rl}
		&\displaystyle\int_Q \theta^2\lambda^3\mu^4\varphi^3 p^2 dxdt
		+ \int_Q  \theta^2 \lambda\mu^2\varphi |\nabla p|^2 dxdt
		+\int_\Omega \lambda^2 \mu^3 e^{14\mu} \theta^2(x, 0) p^2(x, 0)dx\\
		&\displaystyle\leq  C_0\int^{T/4}_0\!\!\int_{\tilde{\omega}_0}
		\theta^2\lambda^3\mu^4 \varphi^3 e^{2\mu} p^2 dxdt+
		C_0\int^T_{T/4}\int_{\tilde{\omega}_0} \theta^2 \lambda^3\mu^4 \varphi^3 p^2 dxdt\\
		&\displaystyle\quad+
		C_0\int^T_0\int_{\tilde{\omega}_0}  \theta^2\lambda\mu^2\varphi |\nabla p|^2 dxdt
		 +C_0\int_Q
		 \theta^2 F^2 dxdt.
		 \end{array}
	\end{eqnarray}
	To remove the local gradient term on the right-hand side,  we introduce a smooth cutoff function
	 $\xi\in C_0^\infty(\omega_0)$ such that $\xi\geq 0$ on $\omega_0$ and $\xi = 1$ on $\tilde{\omega}_0$.
	 Multiplying  \eqref{e1} by $\xi \varphi \theta^2 p$ and integrating over $Q$, standard integration by parts gives
	 \begin{align*}
	\displaystyle\int^T_0\int_{\tilde{\omega}_0}  \theta^2\lambda\mu^2\varphi |\nabla p|^2 dxdt
	 \!\leq\! \int_Q  \theta^2\lambda\mu^2\varphi \xi|\nabla p|^2 dxdt  \!\leq\! C_0\int_0^T\int_{\omega_0} \xi \theta^2 \lambda^3\mu^4 \varphi^3 p^2 dxdt
	\!+\! C_0\int_Q \theta^2 F^2 dxdt.
	\end{align*}
Combining this with  (\ref{Car1})  yields
\begin{eqnarray}\label{Car1**}
		\begin{array}{rl}
		&\displaystyle\int_Q \theta^2\lambda^3\mu^4\varphi^3 p^2 dxdt
		+ \int_Q  \theta^2 \lambda\mu^2\varphi |\nabla p|^2 dxdt
		+\int_\Omega \lambda^2 \mu^3 e^{14\mu} \theta^2(x, 0) p^2(x, 0)dx\\
		&\displaystyle\leq  C_0\int^{T/4}_0\!\!\int_{\omega_0}
\theta^2\lambda^3\mu^4 \varphi^3 e^{2\mu} p^2 dxdt+
		C_0\int^T_{T/4}\int_{\omega_0} \theta^2 \lambda^3\mu^4 \varphi^3 p^2 dxdt+
		C_0\int_Q
		 \theta^2 F^2 dxdt.
		 \end{array}
	\end{eqnarray}

	Finally,  to bound   the   $p_t$ and $p_{x_i x_j}$ terms,
	we use the identity $a^2+b^2=(a+b)^2-2ab$:
	\begin{eqnarray}\label{car_id!}
	\begin{array}{rl}
		&\displaystyle\int_Q \theta^2 \lambda^{-1}\varphi^{-1} \left[
		p_t^2+\big|\sum\limits_{i, j=1}^{n} (b^{i j}p_{x_i})_{x_j}\big|^2\right]dxdt \\
	&\displaystyle= \int_Q \theta^2 \lambda^{-1}\varphi^{-1}  F^2dxdt-
		2\int_Q \theta^2 \lambda^{-1}\varphi^{-1} p_t\cdot \sum\limits_{i, j=1}^{n} (b^{i j}p_{x_i})_{x_j} dxdt.
		\end{array}
	\end{eqnarray}
Integration by parts then yields	
\begin{eqnarray*}
&&-2\int_Q \theta^2 \lambda^{-1}\varphi^{-1} p_t\cdot \sum\limits_{i, j=1}^{n} (b^{i j}p_{x_i})_{x_j} dxdt
=2\int_Q \sum\limits_{i, j=1}^{n}
 (\theta^2 \lambda^{-1}\varphi^{-1} p_t)_{x_j}  b^{i j}p_{x_i} dxdt\\
 &&=\int_Q \theta^2 \lambda^{-1}\varphi^{-1} \sum\limits_{i, j=1}^{n} (b^{i j}p_{x_i}p_{x_j})_tdxdt
 -\int_Q \theta^2 \lambda^{-1}\varphi^{-1} \sum\limits_{i, j=1}^{n} b^{i j}_t p_{x_i}p_{x_j} dxdt\\
 &&\quad+2\int_Q \sum\limits_{i, j=1}^{n}  b^{i j}(\theta^2 \lambda^{-1}\varphi^{-1})_{x_j} p_{x_i} p_t dxdt\\
 &&\leq -\int_Q (\theta^2 \lambda^{-1}\varphi^{-1})_t
  \sum\limits_{i, j=1}^{n} b^{i j}p_{x_i}p_{x_j} dxdt
 -\int_Q \theta^2 \lambda^{-1}\varphi^{-1} \sum\limits_{i, j=1}^{n} b^{i j}_t p_{x_i}p_{x_j} dxdt\\
&&\quad +2\int_Q \sum\limits_{i, j=1}^{n}  b^{i j}(\theta^2 \lambda^{-1}\varphi^{-1})_{x_j} p_{x_i} p_t dxdt\\
&&\leq \frac{1}{2} \int_Q \theta^2 \lambda^{-1}\varphi^{-1} p_t^2 dxdt
+C_0\int_Q \theta^2 \lambda\mu^2\varphi |\nabla p|^2dxdt.
\end{eqnarray*}
Inserting this estimate  into (\ref{car_id!}) gives
$$
\displaystyle \int_Q \theta^2 \lambda^{-1}\varphi^{-1} \left[
		p_t^2+\big|\sum\limits_{i, j=1}^{n} (b^{i j}p_{x_i})_{x_j}\big|^2\right]dxdt
		\leq C_0\int_Q \theta^2 F^2 dxdt
+C_0\int_Q \theta^2 \lambda\mu^2\varphi |\nabla p|^2dxdt.
$$
This, together with (\ref{Car1**}),  yields   	
 the estimate
 \begin{eqnarray*}
\begin{array}{rl}
	&\displaystyle \int_\Omega
	\lambda^2\mu^3 e^{14\mu}
	\theta^2(x, 0) p^2(x, 0) dx
	+\int_Q \theta^2\left(
	\lambda^3\mu^4\varphi^3 p^2+\lambda\mu^2\varphi |\nabla p|^2\right) dxdt\\
	&\quad\displaystyle +\int_Q \theta^2 \lambda^{-1}\varphi^{-1}\left(
	|p_t|^2+ \big|\sum\limits_{i, j=1}^{n} (b^{i j}p_{x_i})_{x_j}\big|^2\right) dxdt\\
	&\displaystyle
	 \leq C_0
	\left(\int^{T/4}_{0}\!\!\!\int_{\omega_0}\theta^2\lambda^3\mu^4 e^{2\mu}\varphi^3 p^2dxdt+
	\int^{T}_{T/4}\!\int_{\omega_0}
	\theta^{2} \lambda^3\mu^4\varphi^{3}  p^2dxdt
	+\int_Q\theta^2 F^2dxdt\right).
\end{array}
\end{eqnarray*}

On the other hand,   the elliptic regularity estimates further  imply
\begin{eqnarray*}
&&\sum\limits_{i, j=1}^n \int_Q \theta^2\lambda^{-1}\varphi^{-1} |p_{x_i x_j}|^2dxdt \\
&&\leq C_0\sum\limits_{i, j=1}^n \int^T_0 |\theta \lambda^{-1/2}\varphi^{-1/2} p|^2_{H^2(\Omega)}dt+
C_0\int_Q \theta^2\Big(\lambda^3\mu^4\varphi^3 p^2
+\lambda\mu^2\varphi |\nabla p|^2\Big)dxdt\\
&&\leq C_0\int_Q  \theta^2 \lambda^{-1}\varphi^{-1} \big|\sum\limits_{i, j=1}^n
 (b^{i j} p_{x_i})_{x_j}\big|^2dxdt+
C_0\int_Q \theta^2\Big(\lambda^3\mu^4\varphi^3 p^2
+\lambda\mu^2\varphi |\nabla p|^2\Big)dxdt\\
&&
	 \leq C_0
	\left(\int^{T/4}_{0}\!\!\!\int_{\omega_0}\theta^2\lambda^3\mu^4 e^{2\mu}\varphi^3 p^2dxdt+
	\int^{T}_{T/4}\!\int_{\omega_0}
	\theta^{2} \lambda^3\mu^4\varphi^{3}  p^2dxdt
	+\int_Q\theta^2 F^2dxdt\right).
\end{eqnarray*}
This completes the proof of Proposition \ref{lemma1}.
\end{proof}

\begin{remark}
	Let us highlight some important features of the Carleman estimate \eqref{carleman-1} $($in Proposition $\ref{lemma1})$ in comparison with some existing results.
	\begin{enumerate}
		\item [$(a)$] The Carleman estimate \eqref{carleman-1} explicitly provides a weighted $L^2$-estimate for the initial value $p(\cdot,0)$ on the left-hand side. Similar estimates have been obtained for stochastic parabolic equations in $\cite{zxl}$, where stronger regularity assumptions were imposed on the principal coefficients $b^{ij}$ than the $C^{1,1}(\overline Q)$ regularity assumed here.
		
		\item [$(b)$] A Carleman estimate for linear parabolic operators with $C^{1,1}$ principal coefficients has also been established in the article $\cite{fu}$. However, the weight function used there is singular at both $t=0$ and $t=T$, and therefore does not directly provide a weighted $L^2$-estimate for the initial value.
		By combining that estimate with suitable energy estimates, one can obtain a bound for the initial value in terms of the non-homogeneous term. Our estimate, in contrast, directly controls the initial value by the weighted non-homogeneous term.
	\end{enumerate}	
\end{remark}

As a direct corollary of Proposition \ref{lemma1},
fixing  $\mu$ as a positive constant yields the following corollary.
  \begin{corollary}\label{corollary1*}
There exist  positive constants $\mu_0$,   $\lambda_0$ and $C_1$
such that for any $\mu\geq \mu_0$ and
$\lambda\geq \lambda_0$,
any solution $p$ to $(\ref{e1})$  satisfies
\begin{eqnarray}\label{carleman-11}
\begin{array}{ll}
	&\displaystyle \int_Q \theta^2 \left[ \lambda^{-1}\gamma_0^{-1}\left(
	|p_t|^2+ \sum\limits_{i, j=1}^{n} \big|p_{x_i x_j}\big|^2\right)
	+ \lambda\gamma_0 |\nabla p|^2 + \lambda^3\gamma_0^3 |p|^2 \right] dxdt\\[6mm]
	&\displaystyle\quad+ \int_\Omega \lambda^2 \theta^2(x, 0) p^2(x, 0) dx\\[2mm]
	& \displaystyle\leq C_1
	\left(\int^T_0\int_{\omega_0} \theta^2\lambda^3\gamma_0^3 p^2dxdt
	+\int_Q\theta^2 F^2dxdt\right),  \quad \forall\  p_T\in L^2(\Omega),
	\end{array}
\end{eqnarray}
where the constant $C_1$ depends  on $\rho_0$,  $\Omega$,  $\omega_0$, $n$, $T$, $\lambda_0$, $\mu_0$, $\Lambda$ and $\mu$, but is independent of $\lambda$.
\end{corollary}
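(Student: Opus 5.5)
The plan is to derive \eqref{carleman-11} from \eqref{carleman-1} in Proposition \ref{lemma1} by fixing $\mu=\mu_0$ (or any fixed $\mu\ge\mu_0$) and absorbing every $\mu$-dependent factor into the constant. Once $\mu$ is fixed, the spatial factor $e^{\mu[\psi(x)+6]}$ in $\varphi$ satisfies
$$e^{6\mu}\le e^{\mu[\psi(x)+6]}\le e^{7\mu},$$
because $0\le\psi\le 1$ on $\overline\Omega$ by \eqref{psi}. Hence $c_\mu\gamma_0\le\varphi\le C_\mu\gamma_0$ on $Q$, with positive constants $c_\mu,C_\mu$ that depend only on $\mu$. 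Consequently $\varphi^3$ and $\gamma_0^3$, $\varphi$ and $\gamma_0$, and $\varphi^{-1}$ and $\gamma_0^{-1}$ are pairwise comparable, with constants independent of $\lambda$.

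The steps are as follows. On the left of \eqref{carleman-1}, bound each weight from below:
\begin{itemize}
\item $\lambda^3\mu^4\varphi^3\ge \mu^4c_\mu^3\lambda^3\gamma_0^3$;
\item $\lambda\mu^2\varphi\ge \mu^2c_\mu\lambda\gamma_0$;
\item $\lambda^{-1}\varphi^{-1}\ge C_\mu^{-1}\lambda^{-1}\gamma_0^{-1}$;
\item $\lambda^2\mu^3e^{14\mu}\ge\lambda^2$.
\end{itemize}
On the right, bound the weights from above, using $\lambda^3\mu^4e^{2\mu}\varphi^3\le \mu^4e^{2\mu}C_\mu^3\lambda^3\gamma_0^3$. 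The factor $\theta^2$ appears identically on both sides and is left untouched; it still depends on $\mu$, but it is the same weight in both estimates. Dividing through by the minimum of the lower-bound constants then gives \eqref{carleman-11}. The resulting $C_1$ depends on $C_0$ and $\mu$, hence on $\rho_0,\Omega,\omega_0,n,T,\lambda_0,\mu_0,\Lambda,\mu$, but not on $\lambda$.

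There is essentially no obstacle. The only point to check is that the $\varphi$-to-$\gamma_0$ comparison holds uniformly in $t\in[0,T)$, including near $t=T$ where $\gamma_0$ blows up. This is immediate because the factorization $\varphi=e^{\mu(\psi+6)}\gamma_0$ is exact and the comparison constants involve only the $x$-dependent factor.
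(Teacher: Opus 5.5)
Your proposal is correct and is the same argument the paper has in mind: the paper obtains this corollary from Proposition~\ref{lemma1} simply by fixing $\mu$. Your explicit comparison $e^{6\mu}\gamma_0\le\varphi\le e^{7\mu}\gamma_0$, followed by absorbing the resulting $\mu$-dependent factors into $C_1$, is exactly what that step amounts to, and it is uniform in $t$ and $\lambda$ because of the exact factorization $\varphi=e^{\mu(\psi+6)}\gamma_0$.
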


Similar to the arguments in \cite{IY},  Corollary \ref{corollary1*} yields
 the following weight-indexed  Carleman estimate.
\begin{corollary}\label{corollary2*}
	For any $m\in\mathbb{R}$, there exist constants positive constants $\mu_{0,m}$, $\lambda_{0,m}$ and $\hat{C}_m$ such that for any $\mu\geq \mu_{0,m}$ and $\lambda\geq \lambda_{0,m}$, the solution $p$ to $(\ref{e1})$  satisfies
	\begin{eqnarray}\label{carleman-111}
		\begin{array}{rl}
			&\displaystyle \int_\Omega
			\lambda^{2+m} \gamma_0^m
			\theta^2(x, 0) p^2(x, 0) dx
			+\int_Q \theta^2\left( \lambda^{3+m}\gamma_0^{3+m} |p|^2
			+\lambda^{1+m}\gamma_0^{1+m} \varphi |\nabla p|^2\right) dxdt\\
			&\quad\displaystyle +\int_Q \theta^2 \lambda^{-1+m}\gamma_0^{-1+m}\left(
			|p_t|^2+ \sum\limits_{i, j=1}^{n}\big|p_{x_i x_j}\big|^2\right) dxdt\\
			&\displaystyle
			 \leq \hat{C}_m \left( \int^T_0\int_{\omega_0} \theta^2\lambda^{3+m}\gamma_0^{3+m} |p|^2dxdt
			+\int_Q\theta^2\lambda^m\gamma_0^m |F|^2dxdt \right),  \quad\quad \forall\  p_T\in L^2(\Omega),
		\end{array}
	\end{eqnarray}
where $\hat{C}_m$ depends  on  $\rho_0$, $m$, $\Omega$,  $\omega_0$, $n$, $T$, $\lambda_{0,m}$, $\mu_{0,m}$,  $\Lambda$ and $\mu$, but is independent of $\lambda$.
\end{corollary}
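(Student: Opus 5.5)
The plan is to derive Corollary~\ref{corollary2*} from Corollary~\ref{corollary1*} by the usual weight-shifting trick of \cite{IY}. We apply \eqref{carleman-11} not to $p$ itself but to the rescaled function $\tilde p=\lambda^{m/2}\gamma_0^{m/2}p$. Note that $\gamma_0$ depends only on $t$, so $\tilde p$ still vanishes on $\Sigma$ and $\tilde p(\cdot,T)$ makes sense in the limiting sense. To avoid the singularity of $\gamma_0$ at $t=T$, we work on $(0,T-\varepsilon)$ and let $\varepsilon\to0$ at the end; since $\theta$ decays like $e^{-c\lambda/(T-t)}$, these endpoint contributions vanish. The function $\tilde p$ satisfies
\begin{equation*}
\tilde p_t+\sum_{i,j=1}^n\bigl(b^{ij}\tilde p_{x_i}\bigr)_{x_j}=\lambda^{m/2}\gamma_0^{m/2}F+\tfrac m2\lambda^{m/2}\gamma_0^{m/2-1}\gamma_0'\,p .
\end{equation*}
Feeding this into \eqref{carleman-11}, the left-hand side contains $\lambda^{3+m}\gamma_0^{3+m}\theta^2p^2$, $\lambda^{1+m}\gamma_0^{1+m}\theta^2|\nabla p|^2$, and $\lambda^{2+m}\gamma_0^m(0)\theta^2(x,0)p^2(x,0)$. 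The right-hand side contains the localized term $\lambda^{3+m}\gamma_0^{3+m}\theta^2 p^2$ on $\omega_0$ and $\lambda^m\gamma_0^m\theta^2F^2$, which are exactly the desired terms. Two error sources remain: the extra forcing from $\gamma_0'$, and the commutator terms in $p_t$.

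The main step is absorbing the extra forcing term $C\lambda^m\gamma_0^{m-2}|\gamma_0'|^2\theta^2p^2$. Here $\mu$ is fixed, so by \eqref{gamma_prop} we have $|\gamma_0'|\le C_\mu\lambda\gamma_0^2$. Unfortunately this bound gives $\lambda^{m+2}\gamma_0^{m+2}p^2$, which is only a factor $\lambda\gamma_0$ below the left-hand term $\lambda^{3+m}\gamma_0^{3+m}p^2$; the factor $\sigma_0$ costs a full power of $\lambda$. This is the obstacle I expect.

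I would handle it by splitting in time. On $[T/4,T)$, $\gamma_0'$ is independent of $\lambda$ and satisfies $|\gamma_0'|\le C\gamma_0^2$. The error is then bounded by $C\lambda^m\gamma_0^{m+2}p^2$, which is absorbed by $\lambda^{3+m}\gamma_0^{3+m}p^2$ once $\lambda$ is large.

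On $[0,T/4]$, $\gamma_0\in[1,2]$, so we can instead use the sharper structure of $\gamma_0$ there. Since $(1-4t/T)^{\sigma_0}$ is a power, we have $\gamma_0^{m/2}=(1+s^{\sigma_0})^{m/2}$ with $s=1-4t/T$, and its derivative is concentrated near $t=0$. The safer route is to not rescale by $\gamma_0$ on $[0,T/4]$ at all: the weights $\gamma_0^{k}$ are comparable to $1$ there, up to the constant $2^{|m|}$. Concretely, I would replace $\gamma_0^{m/2}$ by a function $\hat\gamma^{m/2}$ with $\hat\gamma$ equal to $1$ on $[0,T/2]$, equal to $\gamma_0$ on $[3T/4,T)$, and independent of $\lambda$. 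Then $\hat\gamma\sim\gamma_0$ uniformly, $|\hat\gamma'|\le C\hat\gamma^2$ with $C$ independent of $\lambda$, and the extra term is absorbed for $\lambda\ge\lambda_{0,m}$. The discrepancy between $\hat\gamma$ and $\gamma_0$ only changes constants by a factor $2^{|m|}$, which is absorbed into $\hat C_m$. This is where the $m$-dependence of $\hat C_m$ and $\lambda_{0,m}$ enters.

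Finally, for the second-order terms we have $\tilde p_t=\lambda^{m/2}\hat\gamma^{m/2}p_t+\tfrac m2\lambda^{m/2}\hat\gamma^{m/2-1}\hat\gamma'p$. Hence the bound on $\theta^2\lambda^{-1}\gamma_0^{-1}|\tilde p_t|^2$ controls $\theta^2\lambda^{-1+m}\gamma_0^{-1+m}|p_t|^2$ up to $C\lambda^{m-1}\gamma_0^{m+1}\theta^2p^2$, which is again dominated by the zeroth-order term already estimated. The terms $\tilde p_{x_ix_j}$ carry no commutator at all.

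The gradient term in \eqref{carleman-111} carries an extra factor $\varphi$, which is bounded above and below by $\gamma_0$ times constants depending on the fixed $\mu$. It therefore follows from the $\lambda\gamma_0|\nabla\tilde p|^2$ bound, at the cost of enlarging $\hat C_m$. Collecting these bounds and letting $\varepsilon\to0$ yields \eqref{carleman-111}.
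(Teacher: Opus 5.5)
Your route is the paper's: its entire proof applies Corollary~\ref{corollary1*} to $P=\lambda^{m/2}\gamma_0^{m/2}p$.

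The difficulty you single out on $[0,T/4]$ is not a real one. With $\mu$ fixed, \eqref{gamma_prop} gives $|\gamma_0'|\le C\lambda\gamma_0^2$, with $C$ depending on $\mu$ but not on $\lambda$. So the extra forcing adds at most $Cm^2\int_Q\theta^2\lambda^{m+2}\gamma_0^{m+2}p^2\,dxdt$ to the right-hand side. The left-hand side contains $\int_Q\theta^2\lambda^{m+3}\gamma_0^{m+3}p^2\,dxdt$. Losing one full power of $\lambda$ is exactly what absorption for $\lambda\ge\lambda_{0,m}$ needs. The commutator in $p_t$ is handled the same way, since it contributes $Cm^2\lambda^{m+1}\gamma_0^{m+1}\theta^2p^2$. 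Your $\hat\gamma$ also works (e.g.\ $\hat\gamma=1$ on $[0,T/4]$ and $\hat\gamma=\gamma_0$ on $[T/4,T)$, so that $\hat\gamma\le\gamma_0\le2\hat\gamma$). It is, however, an unnecessary detour.

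The genuine problem is your last step. Since $0\le\psi\le1$, we have $e^{6\mu}\gamma_0\le\varphi\le e^{7\mu}\gamma_0$. Hence the gradient term $\lambda^{1+m}\gamma_0^{1+m}\varphi|\nabla p|^2$ in \eqref{carleman-111} is comparable to $\lambda^{1+m}\gamma_0^{2+m}|\nabla p|^2$. But \eqref{carleman-11} applied to $\tilde p$ yields only $\lambda\gamma_0|\nabla\tilde p|^2\approx\lambda^{1+m}\gamma_0^{1+m}|\nabla p|^2$.

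The missing factor $\gamma_0$ blows up as $t\to T$, so it cannot be absorbed into $\hat C_m$. Interpolating between the $p^2$ and $p_{x_ix_j}$ terms does not recover it either, since the geometric mean of their weights is again $\lambda^{1+m}\gamma_0^{1+m}$.

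The paper's one-line argument gives exactly the same output. What the rescaling proves is \eqref{carleman-111} with gradient term $\lambda^{1+m}\gamma_0^{1+m}|\nabla p|^2$, consistent with \eqref{carleman-11} and \eqref{carleman-111**}. The $\varphi$ in the displayed statement appears to be a leftover from the $\lambda\mu^2\varphi|\nabla p|^2$ term of Proposition~\ref{lemma1}. You should state the conclusion in that form, or flag the discrepancy. Do not assert that the $\varphi$-weighted bound follows by enlarging the constant.
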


\begin{proof}
Set $P=\lambda^{\frac{m}{2}}\gamma_0^{\frac{m}{2}}p$. Applying
 the Carleman estimate in Corollary \ref{corollary1*} to $P$, we immediately  obtain
  the desired estimate (\ref{carleman-111}).
 \end{proof}

 Next,   consider the following parabolic equation:
\begin{eqnarray}\label{e1**}
	\left\{
	\begin{array}{ll}
		p_t+\sum\limits_{i, j=1}^n \left(b^{i j} p_{x_i}\right)_{x_j}
		=F+\mbox{div} {\bf G}   &\mbox{ in   }Q,\\[2mm]
		p=0  &\mbox{ on  }\Sigma,\\[2mm]
		p(x, T)=p_T(x)  &\mbox{ in  }\Omega,
	\end{array}
	\right.
\end{eqnarray}
where   $p_T\in L^2(\Omega)$,
$F\in L^2(Q)$ and ${\bf G}\in (L^2(Q))^n$.

 Analogous to  Proposition 2.1 in \cite{LX},  we have the following Carleman estimate for (\ref{e1**}).
  \begin{corollary}\label{corollary3*}
There exist  positive constants $\mu_1$,   $\lambda_1$ and $C_2$
such that for any $\mu\geq \mu_1$ and
$\lambda\geq \lambda_1$,
any solution $p$ to $(\ref{e1**})$  satisfies
\begin{eqnarray}\label{carleman3**}
\begin{array}{rl}
	&\displaystyle \int_\Omega
	\lambda^2
	\theta^2(x, 0) p^2(x, 0) dx
	+\int_Q \theta^2\left(
	\lambda^3\gamma_0^3 p^2+\lambda\gamma_0 |\nabla p|^2\right) dxdt\\
	&\displaystyle
	 \leq C_2
	\left(\int^T_0\int_{\omega_0} \theta^2\lambda^3\gamma_0^3 p^2dxdt
	+\int_Q\theta^2 F^2dxdt+\int_Q\theta^2 \lambda^2\gamma_0^2  |{\bf G}|^2dxdt\right),  \quad \forall\  p_T\in L^2(\Omega),
\end{array}
\end{eqnarray}
where  $C_2$ depends  on $\rho_0$,  $\Omega$,  $\omega_0$, $n$, $T$, $\lambda_1$, $\mu_1$, $\Lambda$ and $\mu$, but is independent of $\lambda$.
\end{corollary}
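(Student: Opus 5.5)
The plan is to deduce Corollary \ref{corollary3*} from Corollary \ref{corollary2*} (with $m=-2$) through a duality, or Imanuvilov--Yamamoto type, splitting of the divergence source. The difficulty is that $\mbox{div}\,{\bf G}$ belongs only to $L^2(0,T;H^{-1}(\Omega))$, so the term $\int_Q\theta^2F^2$ in \eqref{carleman-111} cannot be applied to it directly. We therefore write $p=p_1+p_2$. Here $p_2$ is a weighted ``corrector'' that absorbs $\mbox{div}\,{\bf G}$ and satisfies a weighted $H^1$-type bound; $p_1$ then solves \eqref{e1} with the $L^2$ source $F$ (together with possible harmless lower-order terms), and we apply Corollary \ref{corollary2*} to it.

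\textbf{Step 1 (the corrector).} Following \cite{IY} and Proposition 2.1 in \cite{LX}, we look for $p_2$ solving
\[
p_{2,t}+\sum_{i,j=1}^n(b^{ij}p_{2,x_i})_{x_j}=\mbox{div}\,{\bf G}+\theta^{-2}\lambda^{3}\gamma_0^{3}\chi_{\omega_0}h \ \mbox{ in } Q,\qquad p_2=0 \mbox{ on } \Sigma,\qquad p_2(\cdot,T)=p_T,
\]
chosen as the minimizer of a weighted functional of the form $\frac12\int_Q\theta^2\lambda^{-1}\gamma_0^{-1}(\ldots)$. Alternatively, and more simply, we take $p_2$ to be the solution of the above problem with $h=0$, multiply by $\theta^{2}\lambda^{a}\gamma_0^{b}p_2$, and run a weighted energy estimate backward in time. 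That energy identity produces terms involving $(\theta^2)_t$ and $\nabla\theta^2$, which are of size $\lambda\gamma_0^2\theta^2$ by \eqref{gamma_prop}. These must be absorbed, which is the reason for the factor $\lambda^2\gamma_0^2|{\bf G}|^2$ in \eqref{carleman3**}. The bound we aim for is
\[
\int_\Omega\lambda^2\theta^2(x,0)p_2^2(x,0)\,dx+\int_Q\theta^2\bigl(\lambda^3\gamma_0^3p_2^2+\lambda\gamma_0|\nabla p_2|^2\bigr)\,dxdt\le C\int_Q\theta^2\lambda^2\gamma_0^2|{\bf G}|^2\,dxdt .
\]

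\textbf{Step 2 (the remainder).} The difference $p_1=p-p_2$ solves \eqref{e1} with $p_1(\cdot,T)=0$ (if $p_2$ carried $p_T$) and source $F$. Applying Corollary \ref{corollary1*} to $p_1$ gives the left-hand side of \eqref{carleman3**} for $p_1$, bounded by $\int_{\omega_0}\theta^2\lambda^3\gamma_0^3p_1^2+\int_Q\theta^2F^2$. Since $p_1^2\le 2p^2+2p_2^2$ on $\omega_0$, Step 1 controls the $p_2$ contribution on the observation set. Adding the estimates for $p_1$ and $p_2$ and using the triangle inequality then gives \eqref{carleman3**}.

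The main obstacle is Step 1, namely getting a weighted estimate for $p_2$ with exactly the powers $\lambda^3\gamma_0^3$ and $\lambda^2$ at $t=0$. A plain energy estimate, whose gain comes from $-(\theta^2)_t$, only yields good signs where $\gamma_0'$ has the right sign. For that reason, rather than relying on the energy estimate, we would use the duality construction of \cite{IY}: $p_2$ is defined through the adjoint forward problem, and its weighted bound comes from Corollary \ref{corollary2*} with $m=-2$ applied to the dual variable, paired against $\mbox{div}\,{\bf G}$ after one integration by parts. This is where the weight $\lambda^{2}\gamma_0^{2}$ on ${\bf G}$ appears, since $\lambda^{-1+m}\gamma_0^{-1+m}$ bounds the dual gradient with $m$ shifted accordingly. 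The interior observation term in that dual estimate is handled with the $h$-term through the standard penalized or HUM construction.
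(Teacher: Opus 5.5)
\textbf{The gap is Step 1, and better technique will not close it.} With $p_2(\cdot,T)=p_T$ the bound you aim for is false. For ${\bf G}=0$ and $p_T\neq0$, the left side is positive near $t=T$ while the right side vanishes, so $p_T$ has to go into $p_1$.

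Even with $p_2(\cdot,T)=0$ and $h=0$, the bound still fails, already for $b^{ij}=\delta_{ij}$. Take ${\bf G}={\bf G}_0(x)\chi_{(T-2\epsilon,T-\epsilon)}(t)$ with $\int_\Omega{\bf G}_0\cdot\nabla e_1\,dx\neq0$, where $e_1$ is the first Dirichlet eigenfunction. The right-hand side is $O(e^{-c\lambda/\epsilon})$, because $\theta$ is super-exponentially small on that time interval. Meanwhile $p_2$ propagates backward and is of order $\epsilon$ on $(T/4,T/2)$, where $\theta^2\lambda^3\gamma_0^3\geq\lambda^3e^{-2\lambda\mu e^{12\mu}}$. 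So a control on $\omega_0$ is unavoidable.

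Once a control is present, Step 2 is no longer ``harmless''. Now $p_1$ has source $F-\theta^{-2}\lambda^3\gamma_0^3\chi_{\omega_0}h$, and Corollary \ref{corollary1*} produces the term $\int_0^T\int_{\omega_0}\theta^{-2}\lambda^6\gamma_0^6h^2$. You never bound this by the right side of (\ref{carleman3**}). Bounding it is a weighted control-cost estimate for the corrector. By duality, the HUM multiplier $z$ solves a \emph{forward} equation $z_t-\sum_{i,j}(b^{ij}z_{x_i})_{x_j}=\theta^2\lambda^3\gamma_0^3p_2$. The estimate therefore amounts to an observability inequality of the form $\int_Q\theta^{-2}\lambda^{-2}\gamma_0^{-2}|\nabla z|^2\leq C\big(\int_Q\theta^{-2}\lambda^{-3}\gamma_0^{-3}|z_t-\sum_{i,j}(b^{ij}z_{x_i})_{x_j}|^2+\int_0^T\int_{\omega_0}\theta^{-2}z^2\big)$, for the forward operator with the inverted weight $\theta^{-1}$. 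Corollary \ref{corollary2*} is an estimate for the backward operator with weight $\theta$ and gives nothing of this kind. Moreover $\theta^{-1}$ is largest near $\Gamma$ and at $t=T$ and smallest near $\omega_0$, which is the wrong orientation for a Carleman weight. So ``Corollary \ref{corollary2*} applied to the dual variable'' has nothing to act on.

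\textbf{What does work.} The paper gives no proof beyond citing Proposition 2.1 of \cite{LX}. The \cite{IY} duality you mention is the right tool, but you should apply it to $p$ itself rather than use it to build a corrector.

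As in Lemma \ref{lemma2}, solve a penalized null-control problem for the forward equation $z_t-\sum_{i,j}(b^{ij}z_{x_i})_{x_j}=\lambda^3\gamma_0^3\theta^2p+\chi_{\omega_0}h$, with $z(\cdot,0)=\lambda^2\theta^2(\cdot,0)p(\cdot,0)$ and $z(\cdot,T)\approx0$. Arrange signs so the pairing below is positive, and use the cost $\int_Q\theta^{-2}z^2+\int_Q\theta^{-2}\lambda^{-3}\gamma_0^{-3}h^2$.

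Corollary \ref{corollary1*}, applied to the backward adjoint of this problem, gives $\int_Q\theta^{-2}z^2+\int_Q\theta^{-2}\lambda^{-3}\gamma_0^{-3}h^2\leq C\big(\int_Q\lambda^3\gamma_0^3\theta^2p^2+\lambda^2\int_\Omega\theta^2(x,0)p^2(x,0)dx\big)$. There is no $e^{C\lambda}$ loss, because the initial datum is paired against the $\lambda^2\theta^2(\cdot,0)$ term of the Carleman estimate. A weighted energy estimate, as in Corollary \ref{corollary2}, then adds $\int_Q\theta^{-2}\lambda^{-2}\gamma_0^{-2}|\nabla z|^2$ to the left side.

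Pair $z$ with (\ref{e1**}), integrate $\mbox{div}\,{\bf G}$ by parts once, and apply Cauchy--Schwarz against these weights. This yields the $p^2$ and $p(\cdot,0)$ terms of (\ref{carleman3**}), with exactly the weight $\theta^2\lambda^2\gamma_0^2$ on ${\bf G}$.

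The gradient term then follows from the energy identity with multiplier $\theta^2\lambda\gamma_0p$. The contributions of $(\theta^2\lambda\gamma_0)_t$ and $\nabla(\theta^2\lambda\gamma_0)$ are bounded by $C\int_Q\lambda^3\gamma_0^3\theta^2p^2$ plus absorbable cross terms. So their sign no longer matters once the $L^2$ bound is in hand. That sign problem is exactly what stopped you when you tried the energy estimate first.
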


 Similar to the arguments in \cite{IY} again,  Corollary \ref{corollary3*} yields
 the following Carleman estimate.
 \begin{corollary}\label{corollary4*}
	For any  $m\in\mathbb{R}$, there exist constants positive constants $\mu_{1,m}$,
	  $\lambda_{1,m}$ and $\tilde{C}_m$ such that
	  for any $\mu\geq \mu_{1,m}$ and $\lambda\geq \lambda_{1,m}$,
	the solution $p$ to $(\ref{e1**})$  satisfies
	\begin{eqnarray}\label{carleman-111**}
\begin{array}{rl}
	&\displaystyle \int_\Omega
	\lambda^{2+m} \gamma_0^m
	\theta^2(x, 0) p^2(x, 0) dx
	+\int_Q \theta^2\left(
	\lambda^{3+m}\gamma_0^{3+m} p^2+\lambda^{1+m}\gamma_0^{1+m} |\nabla p|^2\right) dxdt\\
	&\displaystyle
	 \leq \tilde{C}_m
	\left(\int^T_0\int_{\omega_0} \theta^2\lambda^{3+m}\gamma_0^{3+m} p^2dxdt
	+\int_Q\theta^2\lambda^m\gamma_0^m F^2dxdt\right.\\
	&\left.\displaystyle \quad\quad\quad\quad+\int_Q\theta^2 \lambda^{2+m}\gamma_0^{2+m}  |{\bf G}|^2dxdt\right),  \quad\quad\forall\  p_T\in L^2(\Omega),
\end{array}
\end{eqnarray}
where $\tilde{C}_m$ depends on $\rho_0$,  $m$, $\Omega$,  $\omega_0$, $n$, $T$, $\lambda_{1,m}$, $\mu_{1,m}$,  $\Lambda$ and $\mu$, but is independent of $\lambda$.
\end{corollary}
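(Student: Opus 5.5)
We derive \eqref{carleman-111**} from Corollary \ref{corollary3*} through a change of unknown, in the same way Corollary \ref{corollary2*} was obtained from Corollary \ref{corollary1*}. Put $P=\lambda^{\frac m2}\gamma_0^{\frac m2}p$. Since $\gamma_0$ depends only on $t$, $\mbox{div}$ and $\gamma_0^{m/2}$ commute. Hence $P$ solves \eqref{e1**} with the right-hand side $\lambda^{\frac m2}\gamma_0^{\frac m2}F+\frac m2\lambda^{\frac m2}\gamma_0^{\frac m2-1}\gamma_0' p+\mbox{div}\big(\lambda^{\frac m2}\gamma_0^{\frac m2}{\bf G}\big)$, with the same Dirichlet condition. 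The terminal data needs care because $\gamma_0$ blows up at $t=T$. For $m\le 0$ we have $P(\cdot,T)=0$. In general we work on $(0,T-\varepsilon)$ or with a truncated weight, and use that $\theta^2$ vanishes to infinite order at $t=T$, because $\alpha<0$ and $\gamma_0\to\infty$. This lets us pass to the limit, exactly as in the standard argument of \cite{IY}.

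Next we apply \eqref{carleman3**} to $P$. On the left-hand side, $\lambda^2\theta^2(x,0)P^2(x,0)=\lambda^{2+m}\gamma_0(0)^m\theta^2(x,0)p^2(x,0)$ and $\theta^2\lambda^3\gamma_0^3P^2=\theta^2\lambda^{3+m}\gamma_0^{3+m}p^2$. For the gradient we write $\nabla P=\lambda^{m/2}\gamma_0^{m/2}\nabla p$, so $\lambda\gamma_0|\nabla P|^2=\lambda^{1+m}\gamma_0^{1+m}|\nabla p|^2$ exactly. On the right-hand side, the observation term becomes $\theta^2\lambda^{3+m}\gamma_0^{3+m}p^2$ on $\omega_0$. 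The $F$-term gives $\theta^2\lambda^m\gamma_0^mF^2$, and the ${\bf G}$-term gives $\theta^2\lambda^{2+m}\gamma_0^{2+m}|{\bf G}|^2$. These are exactly the terms in \eqref{carleman-111**}.

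The main obstacle is the extra zeroth-order source $\frac m2\lambda^{m/2}\gamma_0^{m/2-1}\gamma_0'p$, which must be absorbed. On $[T/4,T)$ we have $|\gamma_0'|\le C\gamma_0^2$, where $C$ does not depend on $\lambda$ because $\mu$ is fixed. So its weighted square is at most $Cm^2\lambda^m\gamma_0^{m+2}p^2$, which is smaller than $\lambda^{3+m}\gamma_0^{3+m}p^2$ by a factor $\lambda^{-3}$. On $[0,T/4]$, \eqref{gamma_prop} gives $|\gamma_0'|\le C\lambda\mu^2e^{2\mu}\gamma_0^2$, which yields a bound $C_\mu m^2\lambda^{m+2}\gamma_0^{m+2}p^2$. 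This is still one power of $\lambda$ below the left-hand side term $\lambda^{3+m}\gamma_0^{3+m}p^2$, since $\gamma_0\ge1$. Therefore, choosing $\lambda_{1,m}$ large in terms of $m$ and $\mu$ absorbs this term into the left-hand side. If the $\lambda$-dependence of $\sigma_0$ at $t=0$ turned out to be too strong, we would instead keep $\mu$-dependence in the constants and enlarge $\mu_{1,m}$ as well. This gives \eqref{carleman-111**}, with $\tilde C_m$ independent of $\lambda$.
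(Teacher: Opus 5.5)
Your proposal is correct and is essentially the paper's argument: the paper obtains this corollary from Corollary \ref{corollary3*} in the same way Corollary \ref{corollary2*} is obtained from Corollary \ref{corollary1*}, namely by applying the base estimate to $P=\lambda^{m/2}\gamma_0^{m/2}p$, and the paper calls this step immediate. You also supply details the paper omits: you absorb the extra source $\frac m2\lambda^{m/2}\gamma_0^{m/2-1}\gamma_0'p$, and with $\mu$ fixed, as in the paper, $|\gamma_0'|\le C\lambda\gamma_0^2$ leaves one power of $\lambda$ to spare, so your closing hedge about enlarging $\mu$ is unnecessary; you also treat the terminal data, where only $\theta P\to 0$ as $t\to T$ is actually used (note that $P(\cdot,T)=0$ holds for $m<0$, not for $m=0$, but the case $m=0$ is trivial).
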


\section{The direct method for  controllability approximation}\label{chapter3}
This section is devoted to proving Theorem \ref{th2} via a direct approach  developed in \cite{LX}. More precisely, we establish the null controllability of both the quasi-linear parabolic system \eqref{aeq1} (for sufficiently small initial data) and the linear parabolic system \eqref{aeq1*}. More importantly, we provide a quantitative estimate for the difference between their associated solutions. In what follows, the parameter $\mu$ appearing in the Carleman estimates of \Cref{chapter2} is fixed.

First, we establish the null controllability of the linearized system associated with \eqref{aeq1} using control functions in $C^{1+\delta,\frac{1+\delta}{2}}(\overline Q)$ for $\delta\in(0,1)$. To this end, we define the set
$$
K=\Big\{
z\in C^{3+\delta,\frac{3+\delta}{2}}(\overline Q)
\ \Big|\
|z|_{C^{3+\delta,\frac{3+\delta}{2}}(\overline Q)}\leq1
\Big\},
$$
and, for each $z\in K$, we consider the following linearized system associated with \eqref{aeq1}:
\begin{eqnarray}\label{old1}
	\left\{
	\begin{array}{ll}
		y_t-\sum\limits_{i, j=1}^n \left(a^{ij}(z, \nabla z)y_{x_i}\right)_{x_j} + \tilde{a}(z)y+\tilde{B}(z)\cdot\nabla y=\xi_0 u &\mbox{ in }Q, \\ \ns
		y=0 &\mbox{ on }\Sigma, \\ \ns
		y(x, 0)=y_0(x) &\mbox{ in }\Omega,
	\end{array}
	\right.
\end{eqnarray}
where
\begin{equation}\label{***?}
\displaystyle \tilde{a}(z)=\int^1_0 f_s(\tau z, \tau\nabla z)d\tau\quad \mbox{ and }\quad
\tilde{B}(z)=\int^1_0 \nabla_\zeta f(\tau z, \tau\nabla z)d\tau.
\end{equation}

By the definitions of H\"older spaces,   for any $g\in C^2(\mathbb{R}^{1+n})$ and $z\in
K$, we have
$g(z, \nabla z)\in C^{1+\delta, \frac{1+\delta}{2}}(\overline{Q})$. Moreover,
$$
|g(z, \nabla z)|_{C^{1+\delta, \frac{1+\delta}{2}}(\overline{Q})}
\leq C\left(\!|g|_{C(B_1)}+
|g|_{C^1(B_1)} |z|_{C^{3+\delta, \frac{3+\delta}{2}}(\overline{Q})}
+|g|_{C^2(B_1)} |z|^2_{C^{3+\delta, \frac{3+\delta}{2}}(\overline{Q})}\!\right)\leq C|g|_{C^2(B_1)},
$$
where $B_1=\big\{\ (s, \zeta)\in\mathbb R\times\mathbb R^n\ \big|\
|s|\leq 1, |\zeta|\leq 1\ \big\}$.
Hence, $a^{i j}(z, \nabla z),   a^{i j}_s(z, \nabla z),  \ \tilde{a}(z)\in C^{1+\delta, \frac{1+\delta}{2}}(\overline{Q})$,
$ \nabla_\zeta a^{i j}(z, \nabla z), \ \tilde{B}(z)\in (C^{1+\delta,
 \frac{1+\delta}{2}}(\overline{Q}))^n$ and $a^{i j}(z, \nabla z)\in C^{1, 1}(\overline{Q})
$ for any $z\in K$. Furthermore,
\begin{eqnarray}\label{old2}
\begin{array}{rl}
&\displaystyle\sum\limits_{i, j=1}^{n}\left[
|a^{i j}(z, \nabla z)|_{C^{1+\delta, \frac{1+\delta}{2}}(\overline{Q})}
+|\left(a^{i j}(z, \nabla z)\right)_{x_j}|_{C^{1+\delta, \frac{1+\delta}{2}}(\overline{Q})}\right]\\
&\quad+|\tilde{a}(z)|_{C^{1+\delta, \frac{1+\delta}{2}}(\overline{Q})}+|\tilde{B}(z)|_{\big(C^{1+\delta, \frac{1+\delta}{2}}(\overline{Q})\big)^n}+|a^{i j}(z, \nabla z)|_{C^{1, 1}(\overline{Q})}\\
&\leq C\left(|f|_{C^3(B_1)}+\sum\limits_{i, j=1}^{n} |a^{i j}|_{C^3(B_1)}
\right)\leq C.\end{array}\end{eqnarray}
Here,   $C$ denotes a generic    positive constant, independent of $z$,
which depends  on
$\rho_0, n, T, \Omega, \omega, \omega_0$, $\delta$, $f$ and  $a^{i j}$,
and may vary  from line to line.
By the local well-posedness  for linear parabolic equations, for any
$y_0\in C^{3+\delta}(\overline\Omega)$
and $u\in C^{1+\delta, \frac{1+\delta}{2}}(\overline{Q})$ satisfying
the  compatibility conditions, (\ref{old1})
admits a unique solution $y\in C^{3+\delta, \frac{3+\delta}{2}}(\overline{Q})$, with
$
|y|_{C^{3+\delta, \frac{3+\delta}{2}}(\overline{Q})}\leq C\big(|y_0|_{C^{3+\delta}(\Omega)}+
|u|_{C^{1+\delta, \frac{1+\delta}{2}}(\overline{Q})}\big).
$

The main result of this section establishes the null controllability of the linearized system \eqref{old1},  and provides a quantitative relationship between its null controls and those of the linear system \eqref{aeq1*}. More precisely, we have the following result.
\begin{theorem}\label{t3}
Assume that ${\bf (H)}$ holds.
Then for any $y_0\in C^{3+\delta}(\overline\Omega)$ satisfying the compatibility conditions, there exist two controls  $u^*,  v^*\in
 C^{1+\delta, \frac{1+\delta}{2}}(\overline{Q})$ with
 $$
 	|u^*|_{C^{1+\delta, \frac{1+\delta}{2}}(\overline{Q})}
 	+|v^*|_{C^{1+\delta, \frac{1+\delta}{2}}(\overline{Q})}\leq
 	C
 	|y_0|_{C^{2}(\overline\Omega)},$$
 such that the associated solutions $y^*$
 and $w^*$ to $(\ref{old1})$  and $(\ref{aeq1*})$ satisfy
 $$
 	 y^*(\cdot, T; u^*) = w^*(\cdot, T; v^*) = 0 \text{ in }\Omega.$$
Moreover, the difference between the controls $u^*$ and $v^*$ can be estimated as
\begin{equation}
	|u^*-v^*|_{C^{1+\delta, \frac{1+\delta}{2}}(\overline{Q})}
	\leq C |z|^{1/2}_{C^{2, 0}(\overline{Q})} |y_0|_{C^{2}(\overline\Omega)}.
\end{equation}
\end{theorem}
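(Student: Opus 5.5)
Following the direct approach of \cite{LX}, I would build both controls by a penalized (Fursikov--Imanuvilov type) variational construction, with the same Carleman weight $\theta$ for both systems, and then compare the two optimality systems.

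\textbf{Step 1 (construction of the controls).} For the linear system \eqref{aeq1*}, minimize
$$J_0(w,v)=\frac12\int_Q \theta^{-2}|w|^2\,dxdt+\frac12\int_Q \theta^{-2}\lambda^{-3}\gamma_0^{-3}\xi_0|v|^2\,dxdt$$
over pairs solving \eqref{aeq1*} with $w(\cdot,T)=0$. For \eqref{old1}, minimize the analogous functional $J_z(y,u)$. Since $\theta$ is non-singular at $t=0$, existence and the bound
$$J_0(w^*,v^*)+J_z(y^*,u^*)\le C|y_0|^2_{L^2(\Omega)}$$
follow from duality with Corollary \ref{corollary1*}. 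That corollary applies to the adjoint of \eqref{old1}, whose principal coefficients are $b^{ij}=a^{ij}(z,\nabla z)\in C^{1,1}(\overline Q)$ with $\Lambda\le C$ uniformly in $z\in K$ by \eqref{old2}. The lower-order terms $\tilde a(z)$ and $\tilde B(z)\cdot\nabla$ are absorbed, writing $\tilde B\cdot\nabla p$ as a divergence plus a zero-order term and using Corollary \ref{corollary3*}.

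The optimality conditions give $u^*=-\lambda^3\gamma_0^3\theta^2\xi_0 p$ and $v^*=-\lambda^3\gamma_0^3\theta^2\xi_0\hat p$. Here $p$ solves the adjoint of \eqref{old1} and $\hat p$ solves $\hat p_t+\sum(\tilde a^{ij}\hat p_{x_i})_{x_j}=\theta^{-2}w^*$, with $p$ solving the analogous equation with $y^*$.

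\textbf{Step 2 (H\"older regularity).} I would bootstrap regularity for $\theta^{-1}$-weighted versions of $y^*$ and $\theta\lambda^{3}\gamma_0^{3}p$ using the weight-indexed Corollaries \ref{corollary2*} and \ref{corollary4*}, applied to the equations satisfied by their time and space derivatives. Parabolic $W^{2,1}_p$ estimates, Sobolev embedding and Schauder estimates then yield $u^*,v^*\in C^{1+\delta,\frac{1+\delta}{2}}(\overline Q)$ with norm bounded by $C|y_0|_{C^2}$. The $C^2$ norm of $y_0$ enters here, since it is needed to differentiate the state equation near $t=0$. Because $\theta$ is finite at $t=0$, the controls need not vanish there, but the factor $\theta^2$ decays exponentially as $t\to T$.

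\textbf{Step 3 (comparison).} Set $\bar y=y^*-w^*$ and $\bar p=p-\hat p$. Then $\bar p$ solves the constant-coefficient adjoint equation with source
$$\theta^{-2}\bar y-\sum\big((a^{ij}(z,\nabla z)-\tilde a^{ij})p_{x_i}\big)_{x_j}+\text{lower-order terms}.$$
Likewise, $\bar y$ solves the constant-coefficient equation with source $\xi_0(u^*-v^*)$, plus the divergence of $(a^{ij}(z,\nabla z)-\tilde a^{ij})y^*_{x_i}$, plus the $\tilde a,\tilde B$ terms, and $\bar y(0)=0$. By (H) and $\tilde a^{ij}=a^{ij}(0,\mathbf 0)$, all these coefficient differences are $O(|z|_{C^{1,0}})$; the divergence form is used so that only $|z|_{C^{2,0}}$ is needed.

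I would then take the pair $(\bar y,\bar p)$ in a duality identity (multiply the $\bar y$ equation by $\bar p$ and integrate). Combined with Corollary \ref{corollary4*} for $\bar p$, with the $\mathbf G$ term carrying the coefficient differences, this bounds the weighted norms of $\bar p$ and $\theta^{-1}\bar y$. One factor is $O(|z|)$ times the Step~1 energy, and the other is the unknown itself. The result has the form $X^2\le C|z|\,|y_0|\,X+\dots$, which gives weighted bounds for $\bar p$ only of order $|z|^{1/2}|y_0|$; this is where the exponent $1/2$ arises.

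Finally I would lift this weighted $L^2$ bound to $C^{1+\delta,\frac{1+\delta}{2}}$ for $u^*-v^*=-\lambda^3\gamma_0^3\theta^2\xi_0\bar p$ by repeating the bootstrap of Step~2. The main obstacle is this lift to the H\"older norm while retaining the factor $|z|^{1/2}$. The bootstrap must be done at the level of the differences, and interpolation between the $O(1)$ H\"older bounds and the small weighted $L^2$ bounds may be required.
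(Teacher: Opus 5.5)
Your architecture is the paper's. It uses one Carleman weight $\theta$ for both systems, controls equal to $\lambda^3\gamma_0^3\theta^2\xi_0$ times an adjoint state, a duality identity between the state and adjoint differences closed with Corollary \ref{corollary4*}, and a regularity bootstrap. Imposing $w(\cdot,T)=0$ directly instead of penalizing and letting $\varepsilon\to0$ is immaterial.

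Your Step 3 is misdescribed, though. An inequality $X^2\le C|z||y_0|X+\cdots$ gives $X\le C|z||y_0|$, not $|z|^{1/2}|y_0|$. This is what the paper obtains in \eqref{diff_est_final}, and it then proves \eqref{524} with the full power of $|z|_{C^{2,0}(\overline{Q})}$. That implies the stated bound because $|z|_{C^{2,0}(\overline{Q})}\le 1$ on $K$, and the full power is what the direct proof of Theorem \ref{th2} uses. Also, pairing with $\bar p$ itself leaves mismatched powers of $\gamma_0$. The term $\int_Q\bar y\,\widehat{a}^{ij}(z,\nabla z)p_{x_ix_j}\,dxdt$ needs $\int_Q\theta^{-2}\gamma_0\bar y^2\,dxdt$, but your identity yields only $\int_Q\theta^{-2}\bar y^2\,dxdt$. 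Meanwhile \eqref{old3} controls second derivatives only with the weight $\theta^2\lambda^{-1}\gamma_0^{-1}$, and gradients only with $\theta^2\lambda\gamma_0$; the latter is also why the paper applies Corollary \ref{corollary4*} with $m=-1$. The paper therefore pairs the state difference with $\gamma_0^{-1}(p_\varepsilon-q_\varepsilon)$, and absorbs the resulting term $I_4$ (which contains $\gamma_0'$) by taking $\lambda$ large.

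The genuine gap is the step you flag yourself: transferring smallness from weighted $L^2$ norms to $C^{1+\delta,\frac{1+\delta}{2}}(\overline{Q})$. This is the bulk of the paper's proof, and interpolation cannot do it. Combining an $O(|y_0|)$ bound in $C^{k+\delta'}$ with an $O(|z||y_0|)$ bound in $L^2(Q)$ (parabolic dimension $n+2$) gives only $|z|^{\kappa}|y_0|$, where $\kappa=(k+\delta'-1-\delta)/(k+\delta'+\frac{n+2}{2})$. For $k=1$ this is below $2/(n+6)$. It reaches $1/2$ only if $k+\delta'\ge 2+2\delta+\frac{n+2}{2}$, a threshold that grows with $n$ and exceeds the regularity available from the data. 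With your $|z|^{1/2}$ weighted bound the exponent halves again.

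The missing idea is that the bootstrap can be run linearly on the differences, so no power of $|z|$ is lost. The paper sets $U^k_\varepsilon=e^{(\lambda+s_k)\beta_0}\gamma_0^3p_\varepsilon$ and $V^k_\varepsilon=e^{(\lambda+s_k)\beta_0}\gamma_0^3q_\varepsilon$, with $\beta_0$ depending only on $t$ and $\lambda/8<s_1<s_2<\cdots<\lambda/4$. It then applies $W^{2,1}_{r_{k-1}}$ estimates to \eqref{510} for $W^k_\varepsilon=U^k_\varepsilon-V^k_\varepsilon$. In that equation:
\begin{itemize}
\item The coefficient terms are bounded by $C|z|_{C^{2,0}(\overline{Q})}|U^k_\varepsilon|_{W^{2,1}_{r_{k-1}}(Q)}$, where $|U^k_\varepsilon|_{W^{2,1}_{r_{k-1}}(Q)}\le C|y_0|_{C^2(\overline{\Omega})}$ comes from the undifferenced bootstrap.
\item $g^k_\varepsilon-h^k_\varepsilon$ is a bounded multiple of $W^{k-1}_\varepsilon$, because $s_k>s_{k-1}$.
\item The state-difference source $\tilde g^k_\varepsilon-\tilde h^k_\varepsilon=L(y_\varepsilon-w_\varepsilon)$ is estimated through its own parabolic equation. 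This needs the extra energy bound $\int_Q e^{-2\hat{c}\gamma_0}\theta^{-2}|\nabla(y_\varepsilon-w_\varepsilon)|^2\,dxdt\le C|z|^2_{C^{2,0}(\overline{Q})}|y_0|^2_{L^2(\Omega)}$.
\end{itemize}
Iterating until $r_k>(n+2)/(1-\delta)$ gives \eqref{524} uniformly in $\varepsilon$, after which one lets $\varepsilon\to0$. Without this linear propagation your argument stops at a weighted $L^2$ estimate. In particular, you need a way to raise the regularity of the weighted state difference, since it feeds the adjoint difference at each stage.
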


To prove \Cref{t3}, we first require the following approximate null controllability results for the linearized system \eqref{old1} and the linear system \eqref{aeq1*}.
\begin{lemma}\label{lemma2}
For any $y_0\in C^{3+\delta}(\overline\Omega)$ satisfying the  compatibility conditions and
$\varepsilon>0$,  there  exist  controls
$u_\varepsilon,  v_\varepsilon\in L^2(Q)$
such that the corresponding solutions
$y_\varepsilon$ and $w_\varepsilon$  to $(\ref{old1})$ and $(\ref{aeq1*})$,
 respectively,  satisfy
\begin{eqnarray}\label{old*}
\begin{array}{ll}
&\displaystyle\int_Q  \theta^{-2}\lambda^{-3}\gamma_0^{-3} \left(u_{\varepsilon}^2+v_{\varepsilon}^2\right) dxdt
+\int_Q   \theta^{-2} \left(y_{\varepsilon}^2+w_{\varepsilon}^2\right) dxdt+\displaystyle\frac{1}{\varepsilon}\int_\Omega \left[y_{\varepsilon}^2(x, T)+w_{\varepsilon}^2(x, T)\right]dx\\[4mm]
&\leq
 Ce^{C\lambda} |y_0|^2_{L^2(\Omega)},
 \end{array}
\end{eqnarray}
for sufficiently large $\lambda$, where $C$ is a positive constant depending on $\rho_0$, $\Omega$, $\omega_0$, $n$, $T$, $\lambda_0$, $\mu_0$, $\Lambda$ and $\mu$, but is independent of $\lambda$.
\end{lemma}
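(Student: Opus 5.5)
We prove Lemma \ref{lemma2} by the standard penalized HUM (Fursikov--Imanuvilov) variational construction, treating \eqref{old1} and \eqref{aeq1*} separately but identically. The only structural inputs are the Carleman estimate of Corollary \ref{corollary1*} (or \ref{corollary3*} when lower order terms are written in divergence form) and the uniform coefficient bounds \eqref{old2}, which make every constant independent of $z\in K$.

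\textbf{Step 1: the adjoint system.} For \eqref{old1} the adjoint system is
\[
p_t+\sum_{i,j=1}^n (a^{ij}(z,\nabla z)p_{x_i})_{x_j}-\tilde a(z)p+\mbox{div}\big(\tilde B(z)p\big)=0
\]
in $Q$, with $p=0$ on $\Sigma$ and $p(T)=p_T$. We apply Corollary \ref{corollary3*} with $b^{ij}=a^{ij}(z,\nabla z)$, $F=\tilde a(z)p$ and ${\bf G}=-\tilde B(z)p$. This is legitimate since $b^{ij}\in C^{1,1}(\overline Q)$ with $\Lambda\le C$ by \eqref{old2}, and ellipticity follows from \eqref{a_ij_cond}. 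The right-hand side terms $\int_Q\theta^2(F^2+\lambda^2\gamma_0^2|{\bf G}|^2)\le C\int_Q\theta^2\lambda^2\gamma_0^2p^2$ are absorbed into $\lambda^3\gamma_0^3p^2$ for $\lambda$ large, since $\gamma_0\ge1$. This gives the observability-type inequality
\[
\lambda^2\!\int_\Omega\theta^2(0)p^2(0)+\int_Q\theta^2\lambda^3\gamma_0^3p^2\le C\int_0^T\!\!\int_{\omega_0}\theta^2\lambda^3\gamma_0^3p^2 ,
\]
and likewise, trivially, for the constant-coefficient adjoint of \eqref{aeq1*}.

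\textbf{Step 2: the penalized functional.} For each $\varepsilon>0$ we minimize the strictly convex, coercive functional
\[
J_\varepsilon(u)=\frac12\int_Q\theta^{-2}y^2+\frac12\int_Q\theta^{-2}\lambda^{-3}\gamma_0^{-3}u^2+\frac1{2\varepsilon}\int_\Omega y^2(T)
\]
over $L^2(Q)$ (weighted), where $y$ solves \eqref{old1}. Since $\theta^{-2}$ is bounded, $J_\varepsilon$ is finite. Let $u_\varepsilon$ be the minimizer. The Euler--Lagrange system introduces $p_\varepsilon$ solving the adjoint equation with source $\theta^{-2}y_\varepsilon$ and terminal value $p_\varepsilon(T)=\varepsilon^{-1}y_\varepsilon(T)$, and yields $u_\varepsilon=-\xi_0\theta^2\lambda^3\gamma_0^3p_\varepsilon$.

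\textbf{Step 3: energy identity and absorption.} Multiplying the state equation by $p_\varepsilon$ and integrating by parts gives
\[
\int_Q\theta^{-2}y_\varepsilon^2+\int_Q\theta^{-2}\lambda^{-3}\gamma_0^{-3}u_\varepsilon^2+\frac1\varepsilon\int_\Omega y_\varepsilon^2(T)=-\int_\Omega y_0\,p_\varepsilon(0).
\]
We apply the Carleman estimate to $p_\varepsilon$, now with the extra source $F=\theta^{-2}y_\varepsilon$. Its weighted contribution is $\int_Q\theta^2F^2=\int_Q\theta^{-2}y_\varepsilon^2$, which is exactly a left-hand term. The local term satisfies $\int_{\omega_0}\theta^2\lambda^3\gamma_0^3p_\varepsilon^2\le\int\theta^{-2}\lambda^{-3}\gamma_0^{-3}u_\varepsilon^2$, using $\xi_0=1$ on $\omega_0$. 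Hence
\[
\lambda^2\int_\Omega\theta^2(0)p_\varepsilon^2(0)\le C\,(\text{LHS}).
\]
Cauchy--Schwarz then gives
\[
\Big|\int_\Omega y_0\,p_\varepsilon(0)\Big|\le \Big(\lambda^{-2}\int_\Omega\theta^{-2}(0)y_0^2\Big)^{1/2}\big(C\,\text{LHS}\big)^{1/2},
\]
so $\text{LHS}\le C\lambda^{-2}\sup_\Omega\theta^{-2}(\cdot,0)\,|y_0|^2_{L^2}$. Now $\theta^{-2}(x,0)=e^{-2\lambda\alpha(x,0)}$ with $\alpha(x,0)=2(e^{\mu(\psi+6)}-\mu e^{12\mu})<0$ and $|\alpha(x,0)|\le C$ since $\mu$ is fixed. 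So $\sup\theta^{-2}(\cdot,0)\le e^{C\lambda}$, which is exactly \eqref{old*}. The same computation handles $(v_\varepsilon,w_\varepsilon)$ and the estimates are added.

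\textbf{Main obstacle.} The delicate point is justifying the duality computation rigorously. First, $\theta$ vanishes at $t=T$ (because $\gamma_0\to\infty$ and $\alpha<0$), so the weights $\theta^{-2}$ and $\theta^{-2}\lambda^{-3}\gamma_0^{-3}$ degenerate there. One must check that the Carleman estimate holds for adjoint solutions with a source $\theta^{-2}y_\varepsilon\in L^2$ and data $p_T\in L^2$, and that the lower order absorption is uniform in $z$. I would handle this by working with the minimizer's optimality system directly and, if needed, truncating the weight near $T$ and passing to the limit. The uniformity in $z$ is guaranteed because all constants depend only on $\Lambda$ and the bounds \eqref{old2}.
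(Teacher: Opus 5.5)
Your proposal is correct in substance and follows the paper's proof essentially step for step: the same penalized weighted problem $(P_\varepsilon)$, the optimality relation $u_\varepsilon=\pm\theta^2\lambda^3\gamma_0^3\xi_0p_\varepsilon$, the Carleman estimate with $\lambda^2\int_\Omega\theta^2(x,0)p_\varepsilon^2(x,0)\,dx$ on the left, the duality identity, and $\sup_\Omega\theta^{-2}(\cdot,0)\le e^{C\lambda}$. The only difference is that you invoke Corollary~\ref{corollary3*} with the drift in divergence form, whereas the paper uses Corollary~\ref{corollary1*}, which also yields the second-order bounds in \eqref{old3} that are reused in the proof of Theorem~\ref{t3}.

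The claim in Step~2 that $\theta^{-2}$ is bounded is wrong. Because $\gamma_0(t)=1/(T-t)$ and $\alpha<0$, $\theta^{-2}$ blows up as $t\to T$, as your own last paragraph says. So $J_\varepsilon$ is not finite on all of $L^2(Q)$ (e.g.\ $J_\varepsilon(0)=+\infty$ whenever $y(\cdot,T;0)\neq0$), and existence of the minimizer and of its optimality system is not automatic. The paper disposes of this point only by ``standard arguments,'' so you are at the same level of rigor once that sentence is removed. Note, however, that naively truncating $\gamma_0$ near $T$ is not a free fix: the proof of Proposition~\ref{lemma1} uses $\theta(\cdot,T)=0$ to discard the boundary term at $t=T$.
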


\begin{proof}
 For any $\varepsilon>0$ and $\lambda\geq\lambda_0$, consider the  optimal control problem associated with \eqref{old1}:
 $$
{\bf (P_\varepsilon)} \quad  \quad
\min\limits_{u\in  \mathcal V} \left\{\
 \frac{1}{2}\int_Q  \theta^{-2}  y^2 dxdt+\frac{1}{2}\int_Q  \theta^{-2}\lambda^{-3}\gamma_0^{-3} u^2dxdt
 +\frac{1}{2\varepsilon} \int_\Omega  y^2(x, T)dx\  \right\},
 $$
 where
 $\displaystyle
 \mathcal V=\Big\{\   u\in L^2(Q)\  \Big|\  \int_Q  \theta^{-2}\gamma_0^{-3} u^2dxdt<+\infty\  \Big\}$
 and $y$ is the  solution to (\ref{old1}) corresponding to the control $u\in \mc{V}$.
 By standard arguments, ${\bf(P_\varepsilon)}$ admits a unique optimal solution
$(u_\varepsilon, y_\varepsilon)\in \mathcal{V}\times L^2(0, T; H^1_0(\Omega))$. Furthermore,
the optimal control $u_\varepsilon$ is characterized by
$
u_\varepsilon=\theta^2\lambda^3\gamma_0^3\xi_0 p_\varepsilon$
 in  $ Q$,
where $p_\varepsilon$ solves the following adjoint system:
\begin{eqnarray}\label{54}
\left\{
\begin{array}{ll}
p_{\varepsilon,  t}+\sum\limits_{i, j=1}^n \left(a^{ij}(z, \nabla z)p_{\varepsilon, x_i}\right)_{x_j}
-\tilde{a}(z)p_\varepsilon+\mbox{div}\left(\tilde{B}(z)p_\varepsilon\right) =
\theta^{-2}  y_\varepsilon  &\mbox{ in   }Q,\\[2mm]
p_\varepsilon=0  &\mbox{ on  }\Sigma,\\[2mm]
p_\varepsilon(x, T)=-\displaystyle\frac{1}{\varepsilon}y_\varepsilon(x, T)
&\mbox{ in  }\Omega.
\end{array}
\right.
\end{eqnarray}

Applying  Corollary \ref{corollary1*}  with
$F=\tilde{a}(z)p_\varepsilon+\theta^{-2}  y_\varepsilon-\mbox{div}(\tilde{B}(z)p_\varepsilon)$ and using \eqref{old2}, we obtain
\begin{align*}
& \int_Q \theta^2\Big(\lambda\gamma_0|\nabla p_\varepsilon|^2+
\lambda^{3}\gamma_0^{3} p_\varepsilon^2\Big)dxdt
+\int_\Omega \lambda^2\theta^2(x, 0)p_\varepsilon^2(x, 0)dx\\
&+\int_Q \theta^2 \lambda^{-1}\gamma_0^{-1}\Big(
	|p_{\varepsilon, t}|^2+ \sum\limits_{i, j=1}^{n} \big|p_{\varepsilon, x_i x_j}\big|^2\Big) dxdt\\
&\leq C
\left[
\int^T_0\int_{\omega_0}\theta^{2}\lambda^{3} \gamma_0^{3}  p_\varepsilon^2dxdt
\!+\!\int_Q \theta^{-2} y^2_\varepsilon dxdt\!+\!
\int_Q \theta^2
(p^2_\varepsilon+|\nabla p_\varepsilon|^2) dxdt\right],
\end{align*}
for every $\lambda\geq\lambda_0$.
 Throughout this proof, $C$ denotes a generic positive constant that may vary from line to line. Moreover,  the constant $C$ is independent of $\lambda$,  and depends only on $\rho_0$, $\Omega$, $\omega_0$, $n$, $T$, $\lambda_0$, $\mu_0$, $\Lambda$ and $\mu$.
Choosing $\lambda$ sufficiently large, the last two terms on the right-hand side can be absorbed into the left-hand side, yielding
\begin{eqnarray}\label{old3}
\begin{array}{rl}
&\displaystyle \int_Q \theta^2\left(\lambda\gamma_0|\nabla p_\varepsilon|^2+
\lambda^{3}\gamma_0^{3} p_\varepsilon^2\right)dxdt
+\int_\Omega \lambda^2\theta^2(x, 0)p_\varepsilon^2(x, 0)dx\\
&\displaystyle\quad+\int_Q \theta^2 \lambda^{-1}\gamma_0^{-1}\left(
|p_{\varepsilon, t}|^2+ \sum\limits_{i, j=1}^{n} \big|p_{\varepsilon, x_i x_j}\big|^2\right) dxdt\\
&\displaystyle\leq C
\left(
\int^T_0\int_{\omega_0}\theta^{2}\lambda^{3} \gamma_0^{3}  p_\varepsilon^2dxdt
\!+\!\int_Q \theta^{-2} y^2_\varepsilon dxdt\right).
\end{array}
\end{eqnarray}

Using the duality relation between \eqref{old1} and \eqref{54},  and noting that
$\theta^2(x, 0)\geq e^{-2\lambda \mu e^{12\mu}}$ in $\Omega$,  together with the estimate \eqref{old3}, we get
\begin{eqnarray*}
&&\displaystyle\frac{1}{\varepsilon}\int_\Omega y_\varepsilon^2(x, T)dx+
\int_Q \xi^2_{0} \theta^2 \lambda^3 \gamma_0^3 p_\varepsilon^2dxdt
+\int_Q \theta^{-2}  y_\varepsilon^2 dxdt=-\int_\Omega y_0(x)p_\varepsilon(x, 0)dx\\
&&
\leq |y_0|_{L^2(\Omega)}|p_\varepsilon(\cdot, 0)|_{L^2(\Omega)}\leq Ce^{C\lambda}|y_0|_{L^2(\Omega)} \left(
\int^T_0\int_{\omega_0}\theta^{2}\lambda^{3} \gamma_0^{3}  p_{\varepsilon}^2dxdt
+\int_Q \theta^{-2}  y^2_{\varepsilon} dxdt\right)^{1/2},
\end{eqnarray*}
 which  implies
\begin{eqnarray}\label{55}
\begin{array}{ll}
&\displaystyle\frac{1}{\varepsilon}\int_\Omega y_\varepsilon^2(x, T)dx+
\int_Q \xi^2_{0} \theta^2 \lambda^3 \gamma_0^3 p_\varepsilon^2dxdt
+\int_Q \theta^{-2} y_\varepsilon^2 dxdt\leq
 Ce^{C\lambda}|y_0|^2_{L^2(\Omega)}.
\end{array}
\end{eqnarray}

Next,  we apply  similar arguments for  the linear controlled system \eqref{aeq1*}. For any $\varepsilon>0$,  define the control
$$
v_\varepsilon=\theta^2\lambda^3\gamma_0^3\xi_0 q_\varepsilon
\quad\quad\quad \mbox{ in  }\   Q,
$$
where $q_\varepsilon$ solves the following adjoint system
\begin{eqnarray}\label{56}
\left\{
\begin{array}{ll}
q_{\varepsilon,  t}+\sum\limits_{i, j=1}^n \left(\tilde{a}^{ij} q_{\varepsilon, x_i}\right)_{x_j}=
\theta^{-2} w_\varepsilon  &\mbox{ in   }Q,\\[2mm]
q_\varepsilon=0  &\mbox{ on  }\Sigma,\\[2mm]
q_\varepsilon(x, T)=-\displaystyle\frac{1}{\varepsilon}w_\varepsilon(x, T)
&\mbox{ in  }\Omega,
\end{array}
\right.
\end{eqnarray}
and $w_\varepsilon$ is the solution to (\ref{aeq1*}) corresponding to $v=v_\varepsilon$.
Proceeding as in the derivations of \eqref{old3} and \eqref{55}, we respectively obtain
\begin{eqnarray}\label{old3*&}
\begin{array}{rl}
&\displaystyle \int_Q \theta^2\left(\lambda\gamma_0|\nabla q_\varepsilon|^2+
\lambda^{3}\gamma_0^{3} q_\varepsilon^2\right)dxdt
+\int_\Omega \lambda^2\theta^2(x, 0)q_\varepsilon^2(x, 0)dx\\
&\displaystyle\q+\int_Q \theta^2 \lambda^{-1}\gamma_0^{-1}\left(
	|q_{\varepsilon, t}|^2+ \sum\limits_{i, j=1}^{n} \big|q_{\varepsilon, x_i x_j}\big|^2\right) dxdt\\
&\displaystyle\leq C
\left(
\int^T_0\int_{\omega_0}\theta^{2}\lambda^{3} \gamma_0^{3}  q_\varepsilon^2dxdt
\!+\!\int_Q \theta^{-2} w^2_\varepsilon dxdt\right),
\end{array}
\end{eqnarray}
and
\begin{eqnarray}\label{55*}
\begin{array}{ll}
&\displaystyle\frac{1}{\varepsilon}\int_\Omega w_\varepsilon^2(x, T)dx+
\int_Q \xi^2_{0} \theta^2 \lambda^3 \gamma_0^3 q_\varepsilon^2dxdt
+\int_Q \theta^{-2}  w_\varepsilon^2 dxdt\leq
 Ce^{C\lambda} |y_0|^2_{L^2(\Omega)}.
\end{array}
\end{eqnarray}
Consequently, combining (\ref{55}) and (\ref{55*}) yields
 the desired estimate (\ref{old*}).
\end{proof}

Using  \Cref{lemma2}, one can obtain the following estimate on the gradient of  $y_\varepsilon$ and $w_\varepsilon$.
\begin{corollary}\label{corollary2}
	For any $y_0\in C^{3+\delta}(\overline\Omega)$ satisfying the
	compatibility conditions and
	$\varepsilon>0$, the solutions $y_\varepsilon$ and $w_\varepsilon$ to the systems \eqref{old1} and \eqref{aeq1*} with controls $u_\varepsilon,  v_\varepsilon\in L^2(Q)$, respectively, satisfy the following estimate
	\begin{align}\label{grad_est}
		\int_Q \gamma_0^{-2} \theta^{-2} \left( \left|\nabla y_\varepsilon\right|^2 + \left|\nabla w_\varepsilon\right|^2\right)dxdt \leq Ce^{C\lambda} \left|y_0\right|^2_{L^2(\Omega)},
	\end{align}
	where $C>0$ is a positive constant, independent of $\lambda$.
\end{corollary}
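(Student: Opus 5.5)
We prove \eqref{grad_est} with a weighted energy estimate for the state equations \eqref{old1} and \eqref{aeq1*}. The weight is $\eta:=\gamma_0^{-2}\theta^{-2}$, and the right-hand side is controlled by the bounds already obtained in \eqref{old*}. We give the argument for $y_\varepsilon$; the one for $w_\varepsilon$ is identical and simpler, since there are no lower-order terms and the coefficients are constant.

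\emph{Step 1 (weighted multiplier).} Multiply the first equation of \eqref{old1} (with $u=u_\varepsilon$, $y=y_\varepsilon$) by $\eta\, y_\varepsilon$ and integrate over $Q$. Since $y_\varepsilon=0$ on $\Sigma$, this gives
\begin{equation*}
\frac12\int_\Omega \eta(x,T)y_\varepsilon^2(x,T)dx-\frac12\int_\Omega\eta(x,0)y_0^2dx-\frac12\int_Q\eta_t y_\varepsilon^2dxdt+\int_Q\eta\sum_{i,j=1}^n a^{ij}(z,\nabla z)y_{\varepsilon,x_i}y_{\varepsilon,x_j}dxdt
\end{equation*}
\begin{equation*}
=-\int_Q\sum_{i,j=1}^n a^{ij}(z,\nabla z)y_{\varepsilon,x_i}\eta_{x_j}y_\varepsilon dxdt-\int_Q\eta\big(\tilde a(z)y_\varepsilon^2+\tilde B(z)\cdot\nabla y_\varepsilon\, y_\varepsilon\big)dxdt+\int_Q\eta\,\xi_0u_\varepsilon y_\varepsilon dxdt .
\end{equation*}
Near $t=T$ we have $\gamma_0\to\infty$ and $\theta^{-2}=e^{-2\lambda\alpha}$ with $\alpha<0$, so $\eta$ blows up there; $\eta(\cdot,T)$ should be read as a limit, or one works on $(0,T-h)$ and lets $h\to0$. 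Because of its sign, the term at $t=T$ can simply be dropped. The term at $t=0$ is at most $Ce^{C\lambda}|y_0|_{L^2(\Omega)}^2$, since $\gamma_0(0)=2$ and $\theta^{-2}(\cdot,0)\le e^{C\lambda}$. By \eqref{a_ij_cond}, the diffusion term is bounded below by $\rho_0\int_Q\eta|\nabla y_\varepsilon|^2$.

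\emph{Step 2 (derivatives of the weight).} We have $\nabla\eta=-2\lambda\eta\nabla\alpha$, so $|\nabla\eta|\le C\lambda\gamma_0\eta$. Likewise $\eta_t=\eta(-2\gamma_0'/\gamma_0-2\lambda\alpha_t)$, and \eqref{gamma_prop} gives $|\eta_t|\le C\lambda\gamma_0^2\eta$. By Young's inequality, the cross term is at most $\frac{\rho_0}{4}\int_Q\eta|\nabla y_\varepsilon|^2+C\lambda^2\int_Q\eta\gamma_0^2y_\varepsilon^2$, and $\tilde B(z)$ is handled in the same way using \eqref{old2}. Since $\eta\gamma_0^2=\theta^{-2}$, every zero-order term is bounded by $C\lambda^2\int_Q\theta^{-2}y_\varepsilon^2$, and this is at most $Ce^{C\lambda}|y_0|^2_{L^2(\Omega)}$ by \eqref{old*}; the factor $\lambda^2$ is absorbed into $e^{C\lambda}$.

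\emph{Step 3 (control term).} Write
\begin{equation*}
\eta|u_\varepsilon y_\varepsilon|=\big(\theta^{-1}\lambda^{-3/2}\gamma_0^{-3/2}|u_\varepsilon|\big)\big(\lambda^{3/2}\gamma_0^{-1/2}\theta^{-1}|y_\varepsilon|\big).
\end{equation*}
Since $\gamma_0\ge1$, Cauchy--Schwarz bounds the control term by
\begin{equation*}
\frac12\int_Q\theta^{-2}\lambda^{-3}\gamma_0^{-3}u_\varepsilon^2+\frac{\lambda^3}{2}\int_Q\theta^{-2}y_\varepsilon^2\le Ce^{C\lambda}|y_0|^2_{L^2(\Omega)},
\end{equation*}
again by \eqref{old*}. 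Combining Steps 1--3 yields \eqref{grad_est}.

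The main obstacle is justifying the integration by parts with the singular weight at $t=T$. We handle it by the truncation above: for $h>0$, $y_\varepsilon\in L^2(0,T;H_0^1(\Omega))$ and $\eta$ is bounded on $[0,T-h]$, so the computation is legitimate there; the bounds are uniform in $h$, and monotone convergence then gives the result. A second point to check is that the powers of $\gamma_0$ actually close, namely $\eta\gamma_0^2=\theta^{-2}$ and $\eta\gamma_0^{-1}\le\theta^{-2}\lambda^{-3}\gamma_0^{-3}\cdot\lambda^3\gamma_0^{-1}\cdot\gamma_0^{3}\cdot\gamma_0^{-2}$. This is exactly why the weight exponent $-2$ on $\gamma_0$ was chosen.
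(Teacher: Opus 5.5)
Your argument is correct and is essentially the paper's: the same multiplier $\gamma_0^{-2}\theta^{-2}y_\varepsilon$, ellipticity, Young's inequality, and the bounds in \eqref{old*}, with the identity $\gamma_0^{-2}\theta^{-2}\cdot\gamma_0^{2}=\theta^{-2}$ closing the estimate. The differences are technical: you cut off at $t=T-h$ and pass to the limit by monotone convergence, whereas the paper regularizes the weight via $\theta_n$ (built from $\gamma_n=1/(T-t+\delta_n)$) and uses weak lower semicontinuity; and your intermediate bound should read $|\eta_t|\le C\lambda^2\gamma_0^2\eta$ rather than $C\lambda\gamma_0^2\eta$, because $\sigma_0$ in \eqref{gamma_prop} carries a factor $\lambda$, which is harmless since polynomial powers of $\lambda$ are absorbed into $e^{C\lambda}$.
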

\begin{proof}
	Let $\{\delta_n\}_{n=1}^\infty$ be a sequence of positive real numbers converging to $0.$
	We define the functions $\gamma_n$, $\alpha_n$ and $\theta_n$ as follows
	\begin{align*}
		\gamma_n(t) := \begin{cases}
			\gamma_0(t) & t\in [0,3T/4]\\
			\frac{1}{T-t+\delta_n} & t\in [3T/4,T]
		\end{cases},
		\quad \alpha_n (x,t) := \left[e^{\mu(\psi(x)+6)}-
		\mu e^{12\mu}\right]\gamma_n(t),
		\quad \theta_n := e^{\lambda\alpha_n}.
	\end{align*}
	Note that $\gamma_n\leq \gamma_0$, which implies  $-\alpha_n\leq -\alpha$ and $\theta_n^{-1}\leq \theta^{-1}$.
	
	Multiplying  the equation governed  by the pair $(u_\varepsilon, y_\varepsilon)$ by $\gamma_0^{-2}\theta_n^{-2}y_\varepsilon$ and performing  integration by parts over $Q$,
	and  utilizing the facts that $y=0$ on $\Sigma$ and$\gamma_0^{-1}(T)=0$, we obtain
		\begin{eqnarray*}
		&&\int_Q \gamma_0^{-2} \sum\limits_{i, j=1}^n a^{ij}(z, \nabla z) y_{\varepsilon,x_i} \left(\theta_n^{-2}y_\varepsilon\right)_{x_j} dxdt\\
		&&= \int_Q \xi_0 \gamma_0^{-2}\theta_n^{-2} u_\varepsilon y_\varepsilon dxdt
		+ \frac{1}{2} \int_Q \left(\gamma_0^{-2}\theta_n^{-2}\right)_t\left|y_\varepsilon\right|^2  dxdt
		+\frac{1}{8} \int_\Omega \theta_n^{-2}(x, 0) \left|y_0(x)\right|^2 dx\\
		&&\quad-  \int_Q \gamma_0^{-2}\theta_n^{-2} \tilde{a}(z) \left|y_\varepsilon
		\right|^2 dxdt
		-  \int_Q \gamma_0^{-2}\theta_n^{-2}y_\varepsilon \tilde{B}(z)\cdot\nabla y_\varepsilon dxdt.
	\end{eqnarray*}
	Note that $\left|\left(\gamma_0^{-2}\theta_n^{-2}\right)_t\right|\leq C \lambda^2 \theta^{-2}$. Using this fact along with the property \eqref{b_ij_cond} in the above estimate, we deduce
	\begin{eqnarray*}
		&&\rho_0 \int_Q  \theta_n^{-2} \gamma_0^{-2}\left| \nabla y_{\varepsilon}\right|^2  dxdt\\
		&&\leq  \int_Q \xi_0 \gamma_0^{-2}\theta^{-2}
		\left|u_\varepsilon\right| \left|y_\varepsilon\right|dxdt
		+ C \lambda^2 \int_Q \theta^{-2}\left|y_\varepsilon\right|^2  dxdt
		+ e^{C\lambda}\left|y_0\right|_{L^2(\Omega)}^2\\
		&&\quad
		+ \frac{1}{2} \int_Q \gamma_0^{-2} \sum\limits_{i, j=1}^n \left(a^{ij}(z, \nabla z) \left(\theta_n^{-2}\right)_{x_j}\right)_{x_i} \left|y_\varepsilon\right|^2 dxdt.
	\end{eqnarray*}
	Observe that $\left|\left(a^{ij}(z, \nabla z) \left(\theta_n^{-2}\right)_{x_j}\right)_{x_i}\right| \leq C \lambda^2\gamma_0^2\theta^{-2}$. Thanks to \Cref{lemma2},  this yields
	\begin{eqnarray}\label{y_eps}
	\begin{array}{rl}
		&\displaystyle \int_Q  \theta_n^{-2} \gamma_0^{-2}\left| \nabla
		y_{\varepsilon}\right|^2  dxdt\\
		&\displaystyle \leq  e^{C\lambda} \left|y_0\right|_{L^2(\Omega)}^2
		+ \int_Q \xi_0 \gamma_0^{-2}\theta^{-2} \left|u_\varepsilon\right|
		 \left|y_\varepsilon\right| dxdt
		+ C  \lambda^2 \int_Q \theta^{-2} \left|y_\varepsilon\right|^2 dxdt\\
		&\displaystyle \leq e^{C\lambda} \left|y_0\right|_{L^2(\Omega)}^2
		+ C \lambda^{-2} \int_Q \theta^{-2}\gamma_0^{-3} \left|u_{\varepsilon}\right|^2 dxdt
		+ C \lambda^2 \int_Q  \theta^{-2} \left|y_{\varepsilon}\right|^2 dxdt\\
		&\displaystyle \leq C e^{C\lambda} \left|y_0\right|_{L^2(\Omega)}^2,
	\end{array}
	\end{eqnarray}
	for some positive constant $C.$
	Thus, the sequence $\left\{\theta_n^{-1} \gamma_0^{-1} \nabla
	y_{\varepsilon}\right\}_{n=1}^\infty$ is uniformly bounded and
	 converges weakly in $(L^2(Q))^n$. Moreover, $$\lim_{n\to\infty}\theta_n^{-1} \gamma_0^{-1} \nabla y_{\varepsilon} = \theta^{-1} \gamma_0^{-1} \nabla y_{\varepsilon}\quad \text{ a.e.}\  \text{ in } Q.$$
	By the uniqueness of the weak limit, we conclude that as $n\rightarrow\infty$,
	\begin{align*}
		\theta_n^{-1} \gamma_0^{-1} \nabla y_{\varepsilon} \rightarrow
		\theta^{-1} \gamma_0^{-1} \nabla y_{\varepsilon},\quad \text{ weakly  in } (L^2(Q))^n,
	\end{align*}
	and consequently,
	\begin{align*}
		\int_Q \theta^{-2} \gamma_0^{-2}\left| \nabla y_{\varepsilon}\right|^2 dxdt
		\leq \liminf\limits_{n\rightarrow\infty} \int_Q \theta_n^{-2} \gamma_0^{-2}\left| \nabla y_{\varepsilon}\right|^2 dxdt.
	\end{align*}
	Finally, from \eqref{y_eps},  we establish that
$\displaystyle
		\int_Q \theta^{-2} \gamma_0^{-2}\left| \nabla y_{\varepsilon}\right|^2  dxdt
		\leq C e^{C\lambda}  \left|y_0\right|_{L^2(\Omega)}^2.
$ Similarly,  we obtain
	$\displaystyle
		\int_Q \theta^{-2} \gamma_0^{-2}\left| \nabla w_{\varepsilon}\right|^2 dxdt
		\leq C e^{C\lambda}  \left|y_0\right|_{L^2(\Omega)}^2.
$
	Combining these two estimates yields  the desired result \eqref{grad_est}.
\end{proof}

Now, we are in a position to  prove \Cref{t3}.

\begin{proof}[Proof of \Cref{t3}]
For any $z\in K$, the key is to prove that the controls $u_\varepsilon$ and $v_\varepsilon$
obtained  in \Cref{lemma2}  belong to     $C^{1+\delta, \frac{1+\delta}{2}}(\overline{Q})$
 and to establish a uniform  estimate for
$|u_\varepsilon-v_\varepsilon|_{C^{1+\delta, \frac{1+\delta}{2}}(\overline{Q})}$.
As before, $C$ denotes a  generic positive constant that may vary from line to line,
 is independent of $\lambda$, and depends only on $\rho_0$, $\Omega$, $\omega_0$, $n$, $T$, $\lambda_0$, $\mu_0$, $\Lambda$ and $\mu$.

\ss

{\bf Step 1. }  Let $\{s_k\}_{k\in\dbN}$ be a strictly
increasing sequence satisfying  $0<\lambda/8<s_k<\lambda/4$, and set $\beta_0(t) = \left(e^{6\mu}-\mu e^{12\mu}\right)\gamma_0(t).$
For each $k\in\mathbb N$, $\varepsilon>0$ and $\lambda\geq 1$, we define
\begin{eqnarray*}
	U_\varepsilon^k := e^{(\lambda+s_k)\beta_0(t)}\gamma_0^3(t) p_\varepsilon\quad \mbox{ and }\quad   V_\varepsilon^k := e^{(\lambda+s_k)\beta_0(t)}\gamma_0^3(t) q_\varepsilon.
\end{eqnarray*}
Using standard arguments on \eqref{54} and \eqref{56},
 $U_\varepsilon^k$ satisfies
\begin{eqnarray}\label{58}
\left\{
\begin{array}{ll}
U^k_{\varepsilon,  t}+\sum\limits_{i, j=1}^n \left(a^{ij}(z, \nabla z)U^k_{\varepsilon, x_i}\right)_{x_j}
-\tilde{a}(z)U^k_\varepsilon+\mbox{div}\left(\tilde{B}(z)U^k_\varepsilon\right) =
\tilde g^k_\varepsilon+g_\varepsilon^k  &\mbox{ in   }Q,\\
U^k_\varepsilon=0  &\mbox{ on  }\Sigma,\\
U^k_\varepsilon(x, T)=0
&\mbox{ in  }\Omega,
\end{array}
\right.
\end{eqnarray}
where $\tilde g^k_\varepsilon=e^{(\lambda+s_k)\beta_0}\gamma_0^3 \theta^{-2} y_\varepsilon$ and $g_\varepsilon^k=[e^{(\lambda+s_k)\beta_0}\gamma_0^3]_t \, p_\varepsilon$.
Similarly,   $V_\varepsilon^k$  satisfies
\begin{eqnarray}\label{59}
\left\{
\begin{array}{ll}
V_{\varepsilon, t}^k+\sum\limits_{i, j=1}^n \left(\tilde{a}^{ij} V^k_{\varepsilon, x_i}\right)_{x_j}=\tilde h^k_\varepsilon+h_\varepsilon^k &\mbox{ in   }Q,\\
V_\varepsilon^k=0  &\mbox{ on  }\Sigma,\\
V_\varepsilon^k(x, T)=0  &\mbox{ in  }\Omega,
\end{array}
\right.
\end{eqnarray}
where $\tilde h^k_\varepsilon=e^{(\lambda+s_k)\beta_0}\gamma_0^3
\theta^{-2} w_\varepsilon$ and $h_\varepsilon^k=[e^{(\lambda+s_k)\beta_0}\gamma_0^3]_t\, q_\varepsilon$.

 We  shall prove that  for a sufficiently large $k\in\dbN$,  $U^k_\varepsilon,  V^k_\varepsilon\in
C^{1+\delta, \frac{1+\delta}{2}}(\overline{Q})$ and establish   a uniform  estimate for
 $|U^k_\varepsilon-V^k_\varepsilon|
_{C^{1+\delta, \frac{1+\delta}{2}}(\overline{Q})}$.  To this end,
we set
$$W^k_\varepsilon=U_\varepsilon^k-V_\varepsilon^k\quad\quad\mbox{and}\quad\quad
\widehat{a}^{i j}(z, \nabla z)=a^{i j}(z, \nabla z)-\tilde{a}^{i j}.$$
Then  $W^k_\varepsilon$ satisfies
\begin{eqnarray}\label{510}
\left\{
\begin{array}{ll}
W_{\varepsilon, t}^k+\sum\limits_{i, j=1}^n \left(\tilde{a}^{ij} W^k_{\varepsilon, x_i}\right)_{x_j}=
-\sum\limits_{i, j=1}^n \left(\widehat{a}^{ij}(z, \nabla z)U^k_{\varepsilon, x_i}\right)_{x_j}
+\tilde{a}(z) U^k_\varepsilon-\mbox{div}\left(\tilde{B}(z)U^k_\varepsilon\right)& \\
\quad\quad\quad\quad\quad\quad\quad\quad\quad\quad\quad\quad+g^k_\varepsilon-h^k_\varepsilon+\tilde g^k_\varepsilon-\tilde h^k_\varepsilon
 &\mbox{ in   }Q,\\
W_\varepsilon^k=0  &\mbox{ on  }\Sigma,\\
W_\varepsilon^k(x, T)=0  &\mbox{ in  }\Omega.
\end{array}
\right.
\end{eqnarray}

{\bf Step 2. } Note that $\beta_0\leq \alpha$ in $Q$, and therefore, for any $k\in\mathbb N$, we have
\begin{equation}\label{old+}
e^{2(\lambda+s_k)\beta_0}
= e^{2s_k (e^{6\mu}-\mu e^{12\mu})\gamma_0}\,
e^{2\lambda(e^{6\mu}-\mu e^{12\mu})\gamma_0}
\leq e^{2s_k (e^{6\mu}-\mu e^{12\mu})\gamma_0} \theta^2.
\end{equation}

Consider the case $k=1$.
Since $\mu$ is a fixed constant,   $\left|\gamma_0'(t)\right| \leq C \lambda \gamma_0^2(t)$ for $t\in (0,T)$ due to \eqref{gamma_prop}. Then,  we have
\begin{align*}
	\left|g_\varepsilon^1\right|^2_{L^2(Q)} =  \int_Q\big|[e^{(\lambda+s_1)\beta_0}\gamma_0^3
	]_t\big|^2p_\varepsilon^2 dxdt
	\leq  C \int_Q  \lambda^4 \gamma_0^{10} e^{2(\lambda+s_1)\beta_0}
	p_\varepsilon^2dxdt.
\end{align*}
Further, owing to \eqref{old+} and  $s_1>\frac{\lambda}{8}$, we note
\begin{align*}
	 \lambda^4 \gamma_0^{10} e^{2(\lambda+s_1)\beta_0}
	 \leq \lambda^4 \gamma_0^{10}e^{2s_1 (e^{6\mu}-\mu e^{12\mu})\gamma_0} \theta^2
	 \leq C \theta^2.
\end{align*}
Substituting this into the previous estimate of $\left|g_\varepsilon^1\right|^2_{L^2(Q)},$
 and  combining it with  \eqref{old3} and \eqref{55} yields
\begin{align*}
	\left|g_\varepsilon^1\right|^2_{L^2(Q)}
	\leq C\int^T_0\int_{\omega_0}  \theta^{2}\gamma_0^3 p_\varepsilon^2dxdt
	+C\int_Q \theta^{-2} y_\varepsilon^2 dxdt\leq  Ce^{C\lambda}|y_0|^2_{L^2(\Omega)}.
\end{align*}
Similarly, we estimate
$
	\left|h_\varepsilon^1\right|^2_{L^2(Q)}
	\leq  Ce^{C\lambda}|y_0|^2_{L^2(\Omega)}.
$
Using the estimates \eqref{old*} and \eqref{old+}, the $L^2$-norms of $g^1_\varepsilon$ and $h^1_\varepsilon$ are bounded by
\begin{eqnarray*}
&&\left|\tilde g^1_\varepsilon\right|^2_{L^2(Q)} + \left|\tilde h^1_\varepsilon\right|^2_{L^2(Q)}
\leq C\int_Q e^{2(\lambda+s_1)\beta_0}\gamma_0^6 \theta^{-4}(y^2_\varepsilon+w^2_\varepsilon)dxdt\\
&&\leq C\int_Q e^{2s_1(e^{6\mu}-\mu e^{12\mu})\gamma_0} \gamma_0^6\theta^{-2}
(y^2_\varepsilon+w^2_\varepsilon)dxdt \leq C\int_Q \theta^{-2} (y^2_\varepsilon+w^2_\varepsilon)dxdt
\leq Ce^{C\lambda}|y_0|^2_{L^2(\Omega)}.
\end{eqnarray*}
Applying
 the $L^p$-estimates for  linear parabolic equations (e.g. \cite[Lemma 4.1]{Liu}) to
   (\ref{58}) and
(\ref{59}) for $k=1$,  we obtain
  $U_\varepsilon^1,  V_\varepsilon^1\in W^{2, 1}_2(Q)$, and
\begin{equation}\label{511}
\left|U_\varepsilon^1\right|_{W^{2, 1}_2(Q)}\!+\!\left|V_\varepsilon^1\right|_{W^{2, 1}_2(Q)}
\!\leq\!  C \left(|\tilde g_\varepsilon^1|_{L^2(Q)}\!+\!|g_\varepsilon^1|_{L^2(Q)}\!+\!
|\tilde h_\varepsilon^1|_{L^2(Q)}\!+\!|h_\varepsilon^1|_{L^2(Q)}\right)
\!\leq\!
 Ce^{C\lambda}|y_0|_{L^2(\Omega)}.
\end{equation}

To estimate  $\left|W_\varepsilon^1\right|_{W^{2,1}_2(Q)}$,
 we first bound the coefficients. Since $\nabla f(0, {\bf 0})={\bf0}$,  we have
$$
\displaystyle \left|\tilde a(z)\right|_{C(\overline{Q})}
= \left|\int^1_0 f_s(\tau z, \tau\nabla z)d\tau-f_s(0, {\bf 0})\right|_{C(\overline{Q})}
\leq C|z|_{C^{1, 0}(\overline{Q})}.
$$
Similarly,
$$
\displaystyle \left|\tilde{B}(z)\right|_{(C(\overline{Q}))^n}
\leq C |z|_{C^{1, 0}(\overline{Q})}\quad\mbox{and}\quad
\displaystyle \left|\mbox{div} \tilde{B}(z)\right|_{C(\overline{Q})}
\leq C|z|_{C^{2, 0}(\overline{Q})}.
$$
Since $\widehat{a}^{i j}(0, {\bf 0})=0$, we also find   $
|\widehat{a}^{i j}(z, \nabla z)|\leq C|z|_{C^{1, 0}(\overline{Q})}.
$

Applying the $L^p$-estimates for linear parabolic equations once again to \eqref{510} for $k=1$,
and leveraging \eqref{old+} and \eqref{511} leads to
\begin{eqnarray*}
\begin{array}{ll}
&\displaystyle \left|W_\varepsilon^1\right|^2_{W^{2, 1}_2(Q)}\\[2mm]
&\leq C\big|\sum\limits_{i, j=1}^n (\widehat{a}^{ij}(z, \nabla z)U^1_{\varepsilon, x_i})_{x_j}\big|^2_{L^2(Q)}\\[2mm]
&\quad+ C|\tilde{a}(z) U_\varepsilon^1|^2_{L^2(Q)}+
C|\mbox{div} (\tilde{B}(z)U_\varepsilon^1)|^2_{L^2(Q)}
+C|\tilde g^1_\varepsilon-\tilde h^1_\varepsilon|^2
_{L^2(Q)}+C|g^1_\varepsilon-h^1_\varepsilon|^2
_{L^2(Q)}\\[2mm]
&\leq C\sum\limits_{i, j=1}^{n} |\widehat{a}^{i j}(z, \nabla z)|^2_{C(\overline{Q})}
|U^1_\varepsilon|^2_{W^{2, 0}_2(Q)}+C|z|^2_{C^{2, 0}(\overline{Q})}|U^1_\varepsilon|^2_{W^{1, 0}_2(Q)}\\[2mm]
&\displaystyle\quad+C|\tilde{a}(z)|^2_{C(\overline{Q})}|U^1_\varepsilon|^2_{L^2(Q)}
+C|\tilde{B}(z)|^2_{(C(\overline{Q}))^n}|\nabla U^1_\varepsilon|^2_{(L^2(Q))^n}
+C|\mbox{div} \tilde{B}(z)|^2_{C(\overline{Q})} |U^1_\varepsilon|^2_{L^2(Q)}\\[2mm]
&\displaystyle\quad+C\int_Q \big|[e^{(\lambda+s_1)\beta_0}\gamma_0^3]_t(p_\varepsilon-q_\varepsilon)\big|^2dxdt
+C\int_Q \big|e^{(\lambda+s_1)\beta_0}\gamma_0^3\theta^{-2}(y_\varepsilon-w_\varepsilon)\big|^2dxdt\\[2mm]
&\leq C |z|^2_{C^{1, 0}(\overline{Q})} |U^1_\varepsilon|^2_{W^{2, 0}_2(Q)}+
C|z|^2_{C^{2, 0}(\overline{Q})}|U^1_\varepsilon|^2_{W^{1, 0}_2(Q)}\\[2mm]
&\quad\displaystyle + C \left[\int_{Q}
\lambda^4  e^{2(\lambda+s_1)\beta_0}\gamma_0^{10}
(p_\varepsilon-q_\varepsilon)^2dxdt+
\int_{Q} e^{2(\lambda+s_1)\beta_0} \gamma_0^6 \theta^{-4}
(y_\varepsilon-w_\varepsilon)^2dxdt\right].
\end{array}
\end{eqnarray*}
Note that
$$
 \gamma_0^6 e^{2(\lambda+s_1)\beta_0} \theta^{-4}
 \leq \gamma_0^{7} e^{2s_1(e^{6\mu}-\mu e^{12\mu})\gamma_0}
\cdot \gamma_0^{-1} \theta^{-2}
\leq C \gamma_0^{-1}\theta^{-2},
$$
and
$$
\lambda^4 \gamma_0^{10} e^{2(\lambda+s_1)\beta_0}
\leq \lambda^4 \gamma_0^{10} e^{\frac{\lambda}{4}(e^{6\mu}-\mu e^{12\mu})\gamma_0}\theta^2
\leq C \theta^2,
$$
Using these two estimates, we ultimately conclude
\begin{eqnarray}\label{Newadd1}
\begin{array}{ll}
&\displaystyle\left|W_\varepsilon^1\right|^2_{W^{2, 1}_2(Q)}\\[3mm]
&\leq Ce^{C\lambda}|z|^2_{C^{2, 0}(\overline{Q})} |y_0|^2_{L^2(\Omega)}\!+\! C\displaystyle\left[
\displaystyle\int_Q \theta^2 (p_\varepsilon-q_\varepsilon)^2dxdt\!+\!
\int_{Q}  \theta^{-2}\gamma_0^{-1}
(y_\varepsilon-w_\varepsilon)^2dxdt
\right].
\end{array}
\end{eqnarray}

{\bf Step 3. } In this step,  we  estimate the last two integrals in (\ref{Newadd1}).
To this end,
by  (\ref{54})  and  (\ref{56}), $p_\varepsilon-q_\varepsilon$ satisfies the following system:
\begin{eqnarray*}
\left\{
\begin{array}{ll}
(p_\varepsilon-q_\varepsilon)_t+\sum\limits_{i, j=1}^n (\tilde{a}^{ij} (p_\varepsilon-q_{\varepsilon})_{x_i})_{x_j}
=-\sum\limits_{i, j=1}^n (\widehat{a}^{ij}(z, \nabla z)p_{\varepsilon, x_i})_{x_j}&\\
\quad\quad\quad\quad\quad\quad\quad\quad\quad\quad\quad\quad\quad\quad\quad\quad+\tilde{a}(z)p_\varepsilon-\mbox{div}(\tilde{B}(z)p_\varepsilon)+\theta^{-2}(y_\varepsilon-
w_\varepsilon) &\mbox{ in   }Q,\\
p_\varepsilon-q_\varepsilon=0  &\mbox{ on  }\Sigma.
\end{array}
\right.
\end{eqnarray*}
Setting $\tilde{\eta}_\varepsilon= \gamma_0^{-1}(p_\varepsilon-q_\varepsilon)$, we find that
  $\tilde{\eta}_\varepsilon$ satisfies
\begin{eqnarray*}
&&\tilde{\eta}_{\varepsilon, t}+\sum\limits_{i, j=1}^n (\tilde{a}^{ij} \tilde{\eta}_{\varepsilon, x_i})_{x_j}\\[-2mm]
&&=-\gamma_0^{-1}\sum\limits_{i, j=1}^n (\widehat{a}^{ij}(z, \nabla z)p_{\varepsilon, x_i})_{x_j}
+\gamma_0^{-1}\tilde{a}(z)p_\varepsilon-\gamma_0^{-1}\mbox{div}(\tilde{B}(z)p_\varepsilon)+\gamma_0^{-1}\theta^{-2}(y_\varepsilon-
w_\varepsilon)\\[-2mm]
&&\quad- \gamma_0^{-2}\gamma'_0(p_\varepsilon-q_\varepsilon).
\end{eqnarray*}
Similarly, $y_\varepsilon-w_\varepsilon$ satisfies
$$(y_\varepsilon-w_\varepsilon)_t-\sum\limits_{i, j=1}^n (\tilde{a}^{ij}
 (y_\varepsilon-w_\varepsilon)_{x_i})_{x_j}=\sum\limits_{i, j=1}^n (\widehat{a}^{ij}(z, \nabla z)y_{\varepsilon, x_i})_{x_j}
-\tilde{a}(z)y_\varepsilon-\tilde{B}(z)\cdot \nabla y_\varepsilon+\xi_0(u_\varepsilon-v_\varepsilon).
$$
By utilizing the duality relation   between the equations  governed by  $y_\varepsilon-w_\varepsilon$ and $\tilde{\eta}_{\varepsilon}$,  we obtain
\begin{align}\label{identity}
	& \int_Q \xi^2_{0} \theta^{2}\lambda^3\gamma_0^{2}
	(p_\varepsilon-q_\varepsilon)^2dxdt
	+\int_Q \theta^{-2}\gamma_0^{-1}(y_\varepsilon-w_\varepsilon)^2dxdt	\nonumber\\
	& = \int_Q\sum\limits_{i, j=1}^n \gamma_0^{-1}\, \widehat{a}^{i j}(z, \nabla z)
	\left(p_{\varepsilon, x_i} w_{\varepsilon, x_j} - q_{\varepsilon, x_i} y_{\varepsilon, x_j}\right)dxdt \nonumber\\
	& \quad+ \int_Q \gamma_0^{-1}\tilde{a}(z) \left( p_\varepsilon - q_\varepsilon\right) y_\varepsilon dxdt
	- \int_Q \gamma_0^{-1}\tilde{a}(z) \left( y_\varepsilon - w_\varepsilon\right) p_\varepsilon dxdt \nonumber\\
	&\quad + \int_Q \gamma_0^{-1} \left( p_\varepsilon - q_\varepsilon\right) \tilde{B}(z)\cdot \nabla y_\varepsilon dxdt
	+ \int_Q \gamma_0^{-1} \operatorname{div}\left(\tilde{B}(z) p_\varepsilon\right) \left( y_\varepsilon - w_\varepsilon\right)dxdt \nonumber\\
	& \quad+ \int_Q \gamma_0^{-2}\gamma_0' \left( p_\varepsilon - q_\varepsilon\right) \left( y_\varepsilon - w_\varepsilon\right)dxdt \nonumber\\
	&=\sum\limits_{i=1}^{4} I_i,
\end{align}
where $I_i$ denotes the $i$-th line on the right-hand side of the identity  above.
We now estimate each of the terms $I_i$, $i=1,\ldots,4$ as follows.
For the first term $I_1$, we have
\begin{eqnarray}\label{I1_est1}
\begin{array}{rl}
	I_1
	&\displaystyle = \int_Q\sum\limits_{i, j=1}^n \gamma_0^{-1} \,\widehat{a}^{i j}(z, \nabla z) \Big(
	p_{\varepsilon, x_i} \left(w_{\varepsilon}-y_{\varepsilon}\right)_{x_j} + \left( p_{\varepsilon} - q_{\varepsilon}\right)_{x_i} y_{\varepsilon,x_j}\Big) dxdt \\
	&\displaystyle = -\int_Q \sum\limits_{i, j=1}^n \gamma_0^{-1} \Big( \left(\widehat{a}^{i j}(z, \nabla z)\right)_{x_j} p_{\varepsilon,x_i} + \widehat{a}^{i j}(z, \nabla z) p_{\varepsilon,x_ix_j} \Big) \left(y_\varepsilon-w_\varepsilon\right) dxdt  \\
	& \displaystyle\hspace{5mm} + \int_Q\sum\limits_{i, j=1}^n \gamma_0^{-1} \,\widehat{a}^{i j}(z, \nabla z) \left( p_{\varepsilon} - q_{\varepsilon}\right)_{x_i} y_{\varepsilon,x_j} dxdt \\
	&\displaystyle\leq C \sqrt{\lambda} \left|z\right|_{C^{2,0}(\overline{Q})} \left(
	\left|\theta\, \nabla p_{\varepsilon}\right|_{L^2(Q)}
	+ \sum_{i,j=1}^n \left|\lambda^{-1/2}\gamma_0^{-1/2} \theta p_{\varepsilon,x_ix_j} \right|_{L^2(Q)}
	\right) \left|\theta^{-1}\gamma_0^{-1/2}(y_\varepsilon-w_\varepsilon)\right|_{L^2(Q)} \\
	&\displaystyle\hspace{5mm} + C \left|z\right|_{C^{1,0}(\overline{Q})}
	\left| \theta \nabla(p_{\varepsilon}-q_{\varepsilon})\right|_{L^2(Q)} \left|\theta^{-1}\gamma_0^{-1}\nabla y_\varepsilon\right|_{L^2(Q)}.
	\end{array}
\end{eqnarray}
Applying Corollary \ref{corollary4*} to the equation satisfied  by $p_\varepsilon-q_\varepsilon$ with $m=-1$, and subsequently  using \eqref{old3} and \eqref{old*}, we  obtain
\begin{eqnarray}\label{pq_diff}
\begin{array}{rl}
	&\displaystyle \int_Q \theta^2 \left|\nabla(p_\varepsilon-q_\varepsilon)\right|^2 dxdt
	+ \int_Q \theta^2 \lambda^2\gamma_0^2\left|p_\varepsilon-
	q_\varepsilon\right|^2 dxdt \\
	&\displaystyle\leq C\int^T_0\int_{\omega_0}   \theta^2\lambda^2\gamma_0^2 (p_\varepsilon-q_\varepsilon)^2 dxdt
	+C\int_Q \theta^2 \lambda^{-1} \gamma_0^{-1} \left(
	\left|\tilde{a}(z) p_\varepsilon\right|^2+\left|\theta^{-2}(y_\varepsilon-w_\varepsilon)\right|^2\right)dxdt \\
	&\displaystyle\hspace{5cm} + C \int_Q \theta^2 \lambda \gamma_0 \left(
	\sum\limits_{i, j=1}^{n}\left|\widehat{a}^{i j}(z, \nabla z) p_{\varepsilon, x_i}\right|^2+\left|\tilde{B}(z)p_\varepsilon\right|^2
	\right)dxdt \\
	&\displaystyle\leq C \int^T_0\int_{\omega_0}   \theta^2\lambda^2\gamma_0^2 (p_\varepsilon-q_\varepsilon)^2 dxdt
	+C \int_Q \theta^{-2} \lambda^{-1} \gamma_0^{-1}(y_\varepsilon-w_\varepsilon)^2dxdt
	+ Ce^{C\lambda} |z|^2_{C^{1, 0}(\overline{Q})}|y_0|^2_{L^2(\Omega)}.
	\end{array}
\end{eqnarray}
By employing  the estimates \eqref{old*} and \eqref{old3} for the first  component,
 and the estimates \eqref{grad_est} and \eqref{pq_diff} for the second, we ultimately conclude
\begin{eqnarray}\label{I1_est}
\begin{array}{rl}
	&\displaystyle I_1 \leq Ce^{C\lambda} \left|z\right|_{C^{2,0}(\overline{Q})} |y_0|_{L^2(\Omega)}
	\left( \int^T_0\int_{\omega_0}   \theta^2\lambda^2\gamma_0^2 (p_\varepsilon-q_\varepsilon)^2 dxdt
	+\int_Q \theta^{-2} \gamma_0^{-1}
	(y_\varepsilon-w_\varepsilon)^2dxdt \right)^{1/2} \\
	& \displaystyle\hspace{5mm} + C e^{C\lambda} |z|^2_{C^{2, 0}(\overline{Q})}|y_0|^2_{L^2(\Omega)} \\
	&\displaystyle \leq C e^{C\lambda} |z|^2_{C^{2, 0}(\overline{Q})}|y_0|^2_{L^2(\Omega)}
	+ \epsilon \left( \int^T_0\int_{\omega_0}   \theta^2\lambda^2\gamma_0^2 (p_\varepsilon-q_\varepsilon)^2 dxdt
	+\int_Q \theta^{-2} \gamma_0^{-1} (y_\varepsilon-w_\varepsilon)^2dxdt \right),
\end{array}
\end{eqnarray}
for any $\epsilon>0$.
Similarly, utilizing  H\"older's inequality alongside
the estimates \eqref{old*}, \eqref{old3} and \eqref{pq_diff},
$I_2$ can be bounded by:
\begin{align*}
	I_2 & \leq C \left|z\right|_{C^{1,0}(\overline{Q})} \Big( \left|\theta^{-1}y_\varepsilon\right|_{L^2(Q)} \left|\theta (p_\varepsilon-q_\varepsilon)\right|_{L^2(Q)} + \left|\theta^{-1} \gamma_0^{-1/2} \left(y_\varepsilon-w_\varepsilon\right)\right|_{L^2(Q)} \left|\theta p_\varepsilon\right|_{L^2(Q)} \Big)\\
	& \leq C e^{C\lambda} |z|^2_{C^{2, 0}(\overline{Q})}|y_0|^2_{L^2(\Omega)}
	+ \epsilon \left( \int^T_0\int_{\omega_0}   \theta^2\lambda^2\gamma_0^2 (p_\varepsilon-q_\varepsilon)^2 dxdt
	+\int_Q \theta^{-2} \gamma_0^{-1} (y_\varepsilon-w_\varepsilon)^2dxdt \right).
\end{align*}
For the term$I_3$, we use the estimates \eqref{old*}, \eqref{old3}, \eqref{grad_est} and \eqref{pq_diff} to get
\begin{align*}
	I_3 & \leq \left|z\right|_{C^{2,0}(\overline{Q})} \bigg( \left|\theta^{-1}\gamma_0^{-1}\nabla y_\varepsilon\right|_{L^2(Q)} \left|\theta (p_\varepsilon-q_\varepsilon)\right|_{L^2(Q)}\\
	&\hspace{3cm} + \left( \left|\theta p_\varepsilon\right|_{L^2(Q)} + \left|\theta\nabla p_\varepsilon\right|_{L^2(Q)} \right) \left|\theta^{-1} \gamma_0^{-1/2} \left(y_\varepsilon-w_\varepsilon\right)\right|_{L^2(Q)} \bigg)\\
	& \leq C e^{C\lambda} |z|^2_{C^{2, 0}(\overline{Q})}|y_0|^2_{L^2(\Omega)}
	+ \epsilon \left( \int^T_0\int_{\omega_0}   \theta^2\lambda^2\gamma_0^2 (p_\varepsilon-q_\varepsilon)^2 dxdt
	+\int_Q \theta^{-2} \gamma_0^{-1} (y_\varepsilon-w_\varepsilon)^2dxdt \right).
\end{align*}
Lastly,   we use  H\"older's inequality and Young's inequality for $I_4$  to get
\begin{align*}
	I_4 & \leq \lambda  \left|\theta \gamma_0^{1/2}(p_\varepsilon-q_\varepsilon)\right|_{L^2(Q)}  \left|\theta^{-1} \gamma_0^{-1/2} (y_\varepsilon-w_\varepsilon)\right|_{L^2(Q)}\\
	& \leq C \lambda^2 \int_Q \theta^2\gamma_0 \left|p_\varepsilon-q_\varepsilon\right|^2 dxdt
	+ \epsilon \int_Q  \theta^{-2} \gamma_0^{-1} (y_\varepsilon-w_\varepsilon)^2dxdt.
\end{align*}

Substituting the estimates for $I_1$--$I_4$ into
 \eqref{identity} and choosing $\epsilon$ to be sufficiently small, we deduce
\begin{eqnarray}\label{imp_est1}
\begin{array}{rl}
	&\displaystyle\int_Q \xi^2_{0} \theta^{2}\lambda^3\gamma_0^{2}
	(p_\varepsilon-q_\varepsilon)^2dxdt
	+\int_Q \theta^{-2}\gamma_0^{-1}(y_\varepsilon-w_\varepsilon)^2dxdt\\
	&\displaystyle\leq C e^{C\lambda} |z|^2_{C^{2, 0}(\overline{Q})}|y_0|^2_{L^2(\Omega)}
	+ C \lambda^2 \int_Q \theta^2\gamma_0 \left|p_\varepsilon-q_\varepsilon\right|^2 dxdt.
\end{array}
\end{eqnarray}
Thanks to \eqref{pq_diff}, we have
\begin{align*}
	&\int_Q \theta^2 \lambda^2\gamma_0^2\left|p_\varepsilon-q_\varepsilon\right|^2 dxdt\\
	&\leq \frac{C}{\lambda} \left[ \int_Q \xi_0^2  \theta^2\lambda^3\gamma_0^2 (p_\varepsilon-q_\varepsilon)^2 dxdt
	+\int_Q \theta^{-2} \gamma_0^{-1}(y_\varepsilon-w_\varepsilon)^2dxdt \right]
	+ Ce^{C\lambda} |z|^2_{C^{1, 0}(\overline{Q})}|y_0|^2_{L^2(\Omega)},
\end{align*}
which, together with \eqref{imp_est1},  gives
\begin{align*}
	\int_Q \theta^2 \lambda^2\gamma_0^2\left|p_\varepsilon-q_\varepsilon\right|^2 dxdt
	\leq C e^{C\lambda} |z|^2_{C^{2, 0}(\overline{Q})}|y_0|^2_{L^2(\Omega)}
	+ C \lambda \int_Q \theta^2\gamma_0 \left|p_\varepsilon-q_\varepsilon\right|^2 dxdt.
\end{align*}
By selecting   $\lambda$ sufficiently large,
we can  absorb the last term on the right-hand side,  which yields
\begin{align*}
		\int_Q \theta^2 \lambda^2\gamma_0^2\left|p_\varepsilon-q_\varepsilon\right|^2 dxdt
	\leq C e^{C\lambda} |z|^2_{C^{2, 0}(\overline{Q})}|y_0|^2_{L^2(\Omega)}.
\end{align*}
Substituting the above estimate  into \eqref{imp_est1}, we  conclude
\begin{align}\label{diff_est_final}
	\int_Q \xi^2_{0} \theta^{2}\lambda^3\gamma_0^{2} (p_\varepsilon-q_\varepsilon)^2dxdt +
	\int_Q \theta^{-2}\gamma_0^{-1}(y_\varepsilon-w_\varepsilon)^2dxdt
	\leq C e^{C\lambda} |z|^2_{C^{2, 0}(\overline{Q})}|y_0|^2_{L^2(\Omega)}.
\end{align}
Finally, integrating  the last two estimates into \eqref{Newadd1} provides the bound:
\begin{align}\label{Newadd18}
	\left|W_\varepsilon^1\right|^2_{W^{2, 1}_2(Q)}
	\leq Ce^{C\lambda} \left|z\right|^2_{C^{2, 0}(\overline{Q})} |y_0|^2_{L^2(\Omega)}.
\end{align}
From this point onward, we fix $\lambda$ sufficiently large so that the previous analysis holds. Consequently, the generic constant $C$ appearing in the subsequent estimates may now depend on $\lambda$, which is fixed.

\ss

{\bf Step 4. }
Notice that the controls can be expressed as
\begin{equation}\label{ll}
u_\varepsilon=\xi_{0}\lambda^3 \theta^{2} e^{-(\lambda+s_k)\beta_0} U_\varepsilon^k
\quad  \mbox{  and  }\quad
v_\varepsilon=\xi_{0}\lambda^3 \theta^{2} e^{-(\lambda+s_k)\beta_0} V_\varepsilon^k.
\end{equation}
Hence,  by (\ref{511}), $u_\varepsilon, v_\varepsilon\in W^{2, 1}_2(Q)$ and
\begin{equation}\label{516}
\left|u_\varepsilon\right|_{W^{2, 1}_2(Q)} + \left|v_\varepsilon\right|_{W^{2, 1}_2(Q)}\leq
C\left|U_\varepsilon^1\right|_{W^{2, 1}_2(Q)} + C\left|V_\varepsilon^1\right|_{W^{2, 1}_2(Q)}
\leq C |y_0|_{L^2(\Omega)}.
\end{equation}

By (\ref{Newadd18}), (\ref{516}),
  and the Sobolev  embedding theorem,   for $r_1=2(n+2)/(n-2)$ $(\mbox{if } n>2)$  or
$r_1\geq 2$ $(\mbox{if } n\leq 2)$, we have
\begin{equation}\label{Newadd12}
\left|W_\varepsilon^1\right|^2_{L^{r_1}(Q)}\leq
C\left|W_\varepsilon^1\right|^2_{W^{2,  1}_2(Q)}
\leq C |z|^2_{C^{2, 0}(\overline{Q})} |y_0|^2_{L^2(\Omega)},
\end{equation}
and
\begin{equation}\label{Newadd13}
\left|u_\varepsilon\right|^2_{L^{r_1}(Q)} + \left|v_\varepsilon\right|^2_{L^{r_1}(Q)}\leq
C \left( \left|U^1_\varepsilon\right|^2_{L^{r_1}(Q)} + \left|V^1_\varepsilon\right|^2_{L^{r_1}(Q)}\right)
\leq C |y_0|^2_{L^2(\Omega)}.
\end{equation}

\medskip

For the case  $k=2$,
observe that
\begin{eqnarray*}
&&g_\varepsilon^2=
[e^{(\lambda+s_2)\beta_0}\gamma_0^3]_{t} p_\varepsilon
=[e^{(\lambda+s_2)\beta_0}\gamma_0^3]_{t}
e^{-(\lambda+s_1)\beta_0}\gamma_0^{-3} U_\varepsilon^1,\\[3mm]
&&h_\varepsilon^2=
[e^{(\lambda+s_2)\beta_0}\gamma_0^3]_{t} q_\varepsilon
=[e^{(\lambda+s_2)\beta_0}\gamma_0^3]_{t}
e^{-(\lambda+s_1)\beta_0}\gamma_0^{-3} V_\varepsilon^1,\\[3mm]
&&g_\varepsilon^2-h_\varepsilon^2=
[e^{(\lambda+s_2)\beta_0}\gamma_0^3]_{t} (p_\varepsilon-q_\varepsilon)
=[e^{(\lambda+s_2)\beta_0}\gamma_0^3]_{t}
e^{-(\lambda+s_1)\beta_0}\gamma_0^{-3} W_\varepsilon^1.
\end{eqnarray*}
Hence, by utilizing (\ref{Newadd12}) and (\ref{Newadd13}), we deduce
\begin{eqnarray}\label{517}
\begin{array}{rl}
&\displaystyle \left|g_\varepsilon^2-h_\varepsilon^2\right|^2_{L^{r_1}(Q)}\leq
C\left|W_\varepsilon^1\right|^2_{L^{r_1}(Q)}\leq  C
 |z|^2_{C^{2, 0}(\overline{Q})}
|y_0|^2_{L^2(\Omega)}, \\[3mm]
&\displaystyle\mbox{and}\quad
\left|g_\varepsilon^2\right|_{L^{r_1}(Q)} + \left|h_\varepsilon^2\right|_{L^{r_1}(Q)}
\leq C\left( \left|U^1_\varepsilon\right|_{L^{r_1}(Q)} + \left|V^1_\varepsilon\right|_{L^{r_1}(Q)}\right)
\leq C|y_0|_{L^2(\Omega)}.
\end{array}
\end{eqnarray}

Next, we  estimate  the terms
$\sum\limits_{i, j=1}^n (\widehat{a}^{i j}(z, \nabla z) U^2_{\varepsilon, x_i})_{x_j}$,
$\tilde{a}(z)U^2_\varepsilon-\mbox{div}(\tilde{B}(z)U^2_\varepsilon)$ and
$\tilde{g}_\varepsilon^2-\tilde{h}_\varepsilon^2$ in (\ref{510}) for $k=2$.  To this end,
we set
$
L(x, t)=e^{(\lambda+s_2)\beta_0(t)}\theta^{-2}(x, t)\gamma_0^3(t).
$
Direct substitution shows
$$\tilde g^2_\varepsilon=L y_\varepsilon, \quad
\tilde h^2_\varepsilon=L w_\varepsilon,\quad
L u_\varepsilon=\xi_{0}\lambda^3\gamma_0^3 e^{(s_2-s_1)\beta_0} U^1_\varepsilon\   \mbox{  and }\
L v_\varepsilon=\xi_{0}\lambda^3\gamma_0^3 e^{(s_2-s_1)\beta_0}  V^1_\varepsilon.
$$
Moreover,  $\tilde g^2_\varepsilon$ and $\tilde h^2_\varepsilon$
satisfy the following systems:
\begin{eqnarray}\label{519}
\left\{
\begin{array}{ll}
\tilde g^2_{\varepsilon, t}-\sum\limits_{i, j=1}^{n} \left(a^{i j}(z, \nabla z) \tilde{g}^2_{\varepsilon, x_i}\right)_{x_j}
+\tilde{a}(z)\tilde g^2_{\varepsilon}
+\tilde{B}(z)\cdot\nabla \tilde g^2_{\varepsilon}
=F_\varepsilon &\mbox{ in   }Q,\\[2mm]
\tilde g^2_\varepsilon=0  &\mbox{ on  }\Sigma,\\[2mm]
\tilde g^2_\varepsilon(x, 0)=L(x, 0)y_0(x)  &\mbox{ in  }\Omega,
\end{array}
\right.
\end{eqnarray}
and
\begin{eqnarray}\label{520}
\left\{
\begin{array}{ll}
\tilde h^2_{\varepsilon, t}-\sum\limits_{i, j=1}^{n} \left(\tilde{a}^{i j}\tilde{h}^2_{\varepsilon, x_i}\right)_{x_j} = G_\varepsilon  &\mbox{ in   }Q,\\[2mm]
\tilde h^2_\varepsilon=0  &\mbox{ on  }\Sigma,\\[2mm]
\tilde h^2_\varepsilon(x, 0)=L(x, 0)y_0(x)  &\mbox{ in  }\Omega,
\end{array}
\right.
\end{eqnarray}
where
\begin{align*}
F_\varepsilon & =
\xi_{0} L u_\varepsilon+L_t y_\varepsilon
-\sum_{i, j=1}^{n} a^{i j}(z, \nabla z)L_{x_i x_j} y_\varepsilon
-2\sum_{i, j=1}^{n} a^{i j}(z, \nabla z)L_{x_i} y_{\varepsilon, x_j}\\[-3mm]
&\quad\quad\quad-\sum_{i, j=1}^{n} \left(a^{i j}(z, \nabla z)\right)_{x_j} L_{x_i}y_\varepsilon
+\tilde{B}(z)\cdot\nabla L y_\varepsilon\\
&=\xi_0^2 \lambda^3\gamma_0^3 e^{(s_2-s_1)\beta_0}U^1_\varepsilon
+L_t y_\varepsilon-\sum_{i, j=1}^{n} a^{i j}(z, \nabla z)L_{x_i x_j}y_\varepsilon
-2\sum_{i, j=1}^{n} a^{i j}(z, \nabla z)L_{x_i}y_{\varepsilon, x_j}\\[-3mm]
&\quad\quad\quad - \sum_{i, j=1}^{n} \left(a^{i j}(z, \nabla z)\right)_{x_j} L_{x_i}y_\varepsilon
+\tilde{B}(z)\cdot\nabla L y_\varepsilon,
\end{align*}
and
\begin{align*}
G_\varepsilon
& =\xi_{0} L v_\varepsilon+L_t w_\varepsilon
-\sum_{i, j=1}^{n} \tilde{a}^{i j}L_{x_i x_j} w_\varepsilon-
2\sum_{i, j=1}^{n} \tilde{a}^{i j}L_{x_i} w_{\varepsilon, x_j}
\\[-3mm]
&=\xi_0^2 \lambda^3\gamma_0^3 e^{(s_2-s_1)\beta_0}V^1_\varepsilon
+L_t w_\varepsilon
-\sum_{i, j=1}^{n} \tilde{a}^{i j}L_{x_i x_j} w_\varepsilon-
2\sum_{i, j=1}^{n} \tilde{a}^{i j}L_{x_i} w_{\varepsilon, x_j}.
\end{align*}
Notice that the weight function $L$ and its derivatives satisfy
\begin{eqnarray*}
&&|L(x, t)|\leq C e^{s_2\beta_0}\theta^{-1}\gamma_0^3,\quad
|\nabla L(x, t)|\leq C e^{s_2\beta_0}\theta^{-1}\gamma_0^4,\\[2mm]
&&|L_{x_i x_j}(x, t)|+|L_t(x, t)|\leq C e^{s_2\beta_0}\theta^{-1}\gamma_0^5,\quad
i, j=1, 2, \cdots, n.
\end{eqnarray*}
Hence, there exists a positive constant $\hat{c}$ such that
$$
|L(x, t)|+|\nabla L(x, t)|+\sum_{i, j=1}^{n} |L_{x_i x_j}(x, t)|+|L_t(x, t)|\leq C
e^{-\hat{c}\gamma_0}\theta^{-1},
$$
and therefore,
\begin{equation}\label{Newadd28}
\left|F_\varepsilon\right|_{L^2(Q)}\leq
C\left|U^1_\varepsilon\right|_{L^2(Q)} + C \left|\theta^{-1}y_\varepsilon\right|_{L^2(Q)}
+C\left|e^{-\hat{c}\gamma_0}\theta^{-1}\nabla y_\varepsilon\right|_{L^2(Q)}.
\end{equation}

Multiplying the equation for $y_\varepsilon$ by $e^{-2\hat{c}\gamma_0}\theta^{-2}y_\varepsilon$ and integrating
 over $Q$, we obtain
\begin{eqnarray*}
&&\int_Q e^{-2\hat{c}\gamma_0}\theta^{-2}|\nabla y_\varepsilon|^2dxdt
\leq C|y_0|^2_{L^2(\Omega)}+C\int_Q \theta^{-2} y^2_\varepsilon dxdt
+C\int_Q \theta^{-2}e^{-4\hat{c}\gamma_0}\xi^2_0 u^2_\varepsilon dxdt\\
&&\leq C|y_0|^2_{L^2(\Omega)}+C\int_Q \theta^{-2} y^2_\varepsilon dxdt
+C\int_Q \theta^{-2}\gamma_0^{-3}\xi^2_0 u^2_\varepsilon dxdt.
\end{eqnarray*}
This inequality, in conjunction  with (\ref{old*}), confirms that
\begin{equation}\label{Newadd33}
	\displaystyle \int_Q e^{-2\hat{c}\gamma_0} \theta^{-2}|\nabla y_\varepsilon|^2dxdt
	\leq C|y_0|^2_{L^2(\Omega)}.
\end{equation}
Using this estimate alongside  \eqref{old*} and \eqref{511} in \eqref{Newadd28} gives
$\left|F_\varepsilon\right|_{L^2(Q)}\leq C|y_0|_{L^2(\Omega)}.$
Through analogous arguments,  we also obtain $\left|G_\varepsilon\right|_{L^2(Q)}\leq C|y_0|_{L^2(\Omega)}.$

Applying the $L^p$-estimates for linear parabolic equations  to (\ref{519}) and (\ref{520}),
and utilizing the Sobolev embedding theorem, we deduce
\begin{equation}\label{521}
\left|\tilde g^2_\varepsilon\right|_{L^{r_1}(Q)}\leq C
\left|\tilde g^2_\varepsilon\right|_{W^{2, 1}_{2}(Q)}\leq
C\left( \left|F_\varepsilon\right|_{L^2(Q)} + |y_0|_{C^{2}(\overline\Omega)}\right)
\leq C |y_0|_{C^{2}(\overline\Omega)},
\end{equation}
and
\begin{equation}\label{521*}\displaystyle
\left|\tilde h^2_\varepsilon\right|_{L^{r_1}(Q)}
\leq C \left|\tilde h^2_\varepsilon\right|_{W^{2, 1}_{2}(Q)}
\leq C\left( \left|G_\varepsilon\right|_{L^2(Q)} + |y_0|_{C^{2}(\overline\Omega)}\right)
\leq C |y_0|_{C^{2}(\overline\Omega)}.
\end{equation}

\smallskip

Furthermore,  setting $\tilde \eta^2_\varepsilon=\tilde g^2_\varepsilon-\tilde h^2_\varepsilon$,
we find that  $\tilde \eta^2_\varepsilon$ satisfies
\begin{align*}
&\tilde \eta^2_{\varepsilon, t}-
\sum_{i, j=1}^{n} \left(\tilde{a}^{i j} \tilde\eta^2_{\varepsilon, x_i}\right)_{x_j}\\[-2mm]
&=\xi_0^2 \lambda^3\gamma_0^3 e^{(s_2-s_1)\beta_0}W^1_\varepsilon
+L_t(y_\varepsilon-w_\varepsilon)-\sum_{i, j=1}^{n}  \tilde{a}^{i j}L_{x_i x_j}(y_\varepsilon-w_\varepsilon)-2\sum_{i, j=1}^{n}  \tilde{a}^{i j}L_{x_i}(y_\varepsilon-w_\varepsilon)_{x_j}\\[-2mm]
&\quad+
\sum_{i, j=1}^{n} \left(\widehat{a}^{i j}(z, \nabla z)\tilde{g}^2_{\varepsilon, x_i}\right)_{x_j}
-\tilde{a}(z) \tilde{g}^2_\varepsilon-\tilde{B}(z)\cdot\nabla \tilde{g}^2_\varepsilon
\\[-2mm]
&\quad-\sum_{i, j=1}^{n} \widehat{a}^{i j}(z, \nabla z)L_{x_i x_j} y_\varepsilon-2\sum_{i, j=1}^{n} \widehat{a}^{i j}(z, \nabla z)L_{x_i} y_{\varepsilon, x_j}
- \sum_{i, j=1}^{n} \left(a^{i j}(z, \nabla z)\right)_{x_j} L_{x_i} y_\varepsilon
+\tilde{B}(z)\cdot \nabla L y_\varepsilon\\
&:= \sum_{i=1}^3\tilde{I}_i.
\end{align*}
	Multiplying the equation governing  $\left(y_\varepsilon-w_\varepsilon\right)$ by $e^{-2\hat{c}\gamma_0}\theta^{-2}\left(y_\varepsilon-w_\varepsilon\right)$ and integrating over $Q$, we obtain
	\begin{align}\label{id1}
		& \int_Q e^{-2\hat{c}\gamma_0}\theta^{-2} \sum_{i,j} \tilde{a}^{ij} \left(y_\varepsilon-w_\varepsilon\right)_{x_i} \left(y_\varepsilon-w_\varepsilon\right)_{x_j} dxdt \nonumber\\
		& \leq \int_Q \left(e^{-2\hat{c}\gamma_0}\theta^{-2} \right)_t \left|y_\varepsilon-w_\varepsilon\right|^2dxdt
		+ \int_Q e^{-2\hat{c}\gamma_0}  \left|y_\varepsilon-w_\varepsilon\right| \, \sum_{i,j} \left| \left(\theta^{-2}\right)_{x_j} \tilde{a}^{ij} \left(y_\varepsilon-w_\varepsilon\right)_{x_i}\right|dxdt \nonumber\\
		&\hspace{5mm} + \int_Q e^{-2\hat{c}\gamma_0}
		 \left|y_\varepsilon-w_\varepsilon\right|\, \sum_{i,j}
		 \left| \left(\theta^{-2}\right)_{x_j} \widehat{a}^{ij}(z,\nabla z) y_{\varepsilon,x_i}\right|dxdt
		\nonumber\\
		&\hspace{5mm} + \int_Q e^{-2\hat{c}\gamma_0} \theta^{-2} \sum_{i,j} \left|\widehat{a}^{ij}(z,\nabla z)\left(y_\varepsilon-w_\varepsilon\right) _{x_j} y_{\varepsilon,x_i}\right|dxdt \nonumber\\
		&\hspace{5mm} + \int_Q \left|\tilde{a}(z)\right| \theta^{-2} e^{-2\hat{c}\gamma_0} \left|y_\varepsilon\right| \left|y_\varepsilon-w_\varepsilon\right|dxdt
		+ \int_Q  \theta^{-2} e^{-2\hat{c}\gamma_0} \left| \tilde{B}(z) \cdot \nabla y_\varepsilon\right| \left|y_\varepsilon-w_\varepsilon\right|dxdt \nonumber\\
		&\hspace{5mm} + \int_Q \xi_0 \theta^{-2} e^{-2\hat{c}\gamma_0} \left|u_\varepsilon-v_\varepsilon\right|  \left|y_\varepsilon-w_\varepsilon\right|dxdt\nonumber\\
		& := \sum_{i=1}^{7} J_i.
	\end{align}
	Since $\lambda,\mu$ are fixed, we have $\left|\left(e^{-2\hat{c}\gamma_0}\theta^{-2} \right)_t\right|\leq C \gamma_0^{-1}\theta^{-2}$ and $\left|\left(\theta^{-2}\right)_{x_j}\right| \leq C \theta^{-2}$.
	Employing  H\"older's inequality and the estimate \eqref{diff_est_final}, the terms
	$J_1, J_2,  J_5$ and $J_7$ can be bounded as:
	\begin{align*}
		J_1 + J_2 + J_5 + J_7 \leq C e^{C\lambda} |z|^2_{C^{2, 0}(\overline{Q})}|y_0|^2_{L^2(\Omega)}.
	\end{align*}
	Similarly, owing to the estimates \eqref{grad_est} and \eqref{diff_est_final}, we establish
	\begin{align*}
		J_3 + J_6 \leq C e^{C\lambda} |z|^2_{C^{2, 0}(\overline{Q})}|y_0|^2_{L^2(\Omega)}.
	\end{align*}
	For $J_4$,  employing
	 Young's inequality alongside the estimate \eqref{Newadd33} yields:
	\begin{align*}
		J_4
		& \leq |z|_{C^{1, 0}(\overline{Q})} \int_Q \theta^{-2} e^{-2\hat{c}\gamma_0} \left|\nabla y_\varepsilon\cdot \nabla(y_\varepsilon-w_\varepsilon)\right|dxdt \\
		& \leq \epsilon \int_Q \theta^{-2} e^{-2\hat{c}\gamma_0} \left|\nabla (y_\varepsilon-w_\varepsilon)\right|^2dxdt
		+ |z|^2_{C^{1, 0}(\overline{Q})} \int_Q \theta^{-2} e^{-2\hat{c}\gamma_0} \left|\nabla y_\varepsilon\right|^2dxdt\\
		& \leq \epsilon \int_Q \theta^{-2} e^{-2\hat{c}\gamma_0} \left|\nabla (y_\varepsilon-w_\varepsilon)\right|^2 dxdt
		+ C |z|^2_{C^{1, 0}(\overline{Q})} |y_0|^2_{L^2(\Omega)}.
	\end{align*}
	Finally,  substituting the aforementioned  estimates into \eqref{id1}, utilizing
	 the condition \eqref{a_ij_cond} on the left-hand side,
	  and choosing $\epsilon$ to be sufficiently small, we conclude
	\begin{align*}
		\int_Q e^{-2\hat{c}\gamma_0}\theta^{-2} \left|\nabla(y_\varepsilon-w_\varepsilon)\right|^2 dxdt
		\leq C |z|^2_{C^{1, 0}(\overline{Q})} |y_0|^2_{L^2(\Omega)}.
	\end{align*}
Combining this estimate with (\ref{diff_est_final}) and (\ref{Newadd18}) provides
\begin{eqnarray*}
&&\Big|\tilde{I}_1\Big|_{L^2(Q)}
\leq C \left|W^1_\varepsilon\right|_{L^2(Q)}
+ C\left|\theta^{-1} e^{-\hat{c}\gamma_0} (y_\varepsilon-w_\varepsilon)\right|_{L^2(Q)}
+C \left|\theta^{-1} e^{-\hat{c}\gamma_0} \nabla(y_\varepsilon-w_\varepsilon)\right|_{L^2(Q)}\\[2mm]
&&\leq C |z|_{C^{2, 0}(\overline{Q})} |y_0|_{L^2(\Omega)}.
\end{eqnarray*}
Further, by applying (\ref{old*}), (\ref{Newadd33}) and (\ref{521}), it holds that
\begin{eqnarray*}
&&\Big|\tilde{I}_2\Big|_{L^2(Q)} + \Big|\tilde{I}_3\Big|_{L^2(Q)}\\
&&\leq C |z|_{C^{2, 0}(\overline{Q})} \left|\tilde{g}^2_\varepsilon\right|_{W^{2, 0}_2(Q)}
+C |z|_{C^{2, 0}(\overline{Q})} \left( \left|\theta^{-1} y_\varepsilon\right|_{L^2(Q)}
+ \left|\theta^{-1} e^{-\hat{c}\gamma_0} \nabla y_\varepsilon\right|_{L^2(Q)}\right)
\\
&&\leq C |z|_{C^{2, 0}(\overline{Q})} |y_0|_{C^2(\Omega)}.
\end{eqnarray*}
Consequently,  using the above bounds alongside the Sobolev embedding theorem,
we secure:
\begin{eqnarray}\label{522}
\begin{array}{rl}
	&\displaystyle\left|\tilde g^2_\varepsilon-\tilde h^2_\varepsilon\right|_{L^{r_1}(Q)}
	\leq C \left|\tilde g^2_\varepsilon-\tilde h^2_\varepsilon\right|_{W^{2, 1}_2(Q)}
	\displaystyle \leq C \left( \left|\tilde{I}_1\right|_{L^2(Q)} + \left|\tilde{I}_2\right|_{L^2(Q)} + \left|\tilde{I}_3\right|_{L^2(Q)}\right) \\
	&\displaystyle \leq C |z|_{C^{2, 0}(\overline{Q})} |y_0|_{C^{2}(\overline\Omega)}.
	\end{array}
\end{eqnarray}
Furthermore, applying the $L^p$-estimates for linear parabolic
equations to \eqref{58} and \eqref{59} for $k=2$ grants
$$
\left|U^2_\varepsilon\right|_{W^{2, 1}_{r_1}(Q)} \leq C \left( \left|g^2_\varepsilon\right|_{L^{r_1}(Q)} +
\left|\tilde{g}^2_\varepsilon\right|_{L^{r_1}(Q)} \right)
\leq C |y_0|_{C^2(\overline{\Omega})},
$$
and
$$
\left|V^2_\varepsilon\right|_{W^{2, 1}_{r_1}(Q)} \leq C \left( \left|h^2_\varepsilon\right|_{L^{r_1}(Q)} +
\left|\tilde{h}^2_\varepsilon\right|_{L^{r_1}(Q)} \right)
\leq C |y_0|_{C^2(\overline{\Omega})}.
$$

In order to assess $\left|W^2_\varepsilon\right|_{W^{2, 1}_{r_1}(Q)}$,  we first observe that
\begin{equation}\label{Newadd50}
\left| \sum\limits_{i, j=1}^{n} \left(\widehat{a}^{i j}(z, \nabla z) U^2_{\varepsilon, x_i}\right)_{x_j}
\right|_{L^{r_1}(Q)}
\leq C |z|_{C^{2, 0}(\overline{Q})} \left|U^2_\varepsilon\right|_{W^{2, 1}_{r_1}(Q)}
\leq C |z|_{C^{2, 0}(\overline{Q})} |y_0|_{C^2(\overline{\Omega})},
\end{equation}
and
\begin{eqnarray}\label{518}
\begin{array}{ll}
&\displaystyle
\Big|
\tilde a(z)U^2_\varepsilon-\mbox{div} (\tilde B(z)U^2_\varepsilon)
\Big|_{L^{r_1}(Q)}\\[3mm]
&\displaystyle\leq
C|z|_{C^{1, 0}(\overline{Q})}|U^2_\varepsilon|_{L^{r_1}(Q)}+
C|z|_{C^{1, 0}(\overline{Q})}|\nabla U^2_\varepsilon|_{L^{r_1}(Q)}
+C|z|_{C^{2, 0}(\overline{Q})}|U^2_\varepsilon|_{L^{r_1}(Q)}\\[3mm]
&\displaystyle\leq
C |z|_{C^{2, 0}(\overline{Q})} |U^2_\varepsilon|_{W^{2, 1}_{r_1}(Q)}
\leq C|z|_{C^{2, 0}(\overline{Q})} |y_0|_{C^2(\overline{\Omega})}.
\end{array}
\end{eqnarray}

Applying the $L^p$-estimates for linear parabolic equations  to  (\ref{510}) for $k=2$,
and utilizing  (\ref{517}), (\ref{522}), (\ref{Newadd50}) and (\ref{518}), we establish that   $W^2_\varepsilon\in  W^{2, 1}_{r_1}(Q)$ with
$$
\left|W^2_\varepsilon\right|_{W^{2, 1}_{r_1}(Q)}
\leq C |z|_{C^{2, 0}(\overline{Q})} |y_0|_{C^{2}(\overline\Omega)}.
$$
By the Sobolev  embedding theorem,   for $r_2=r_1(n+2)/(n+2-2 r_1)$ $(\mbox{if } n+2>2r_1)$  or
$r_2>1$ $(\mbox{if } n+2\leq2 r_1)$, we conclude
$$
\left|W_\varepsilon^2\right|_{L^{r_2}(Q)}\leq
C\left|W_\varepsilon^2\right|_{W^{2,  1}_{r_1}(Q)}
\leq C |z|_{C^{2, 0}(\overline{Q})} |y_0|_{C^{2}(\overline\Omega)}.
$$
Similarly, reference  (\ref{ll}), we verify
$$
\left|u_\varepsilon\right|_{L^{r_2}(Q)} + \left|v_\varepsilon\right|_{L^{r_2}(Q)}
\leq
C \left( \left|U_\varepsilon^2\right|_{W^{2, 1}_{r_1}(Q)} + \left|V^2_\varepsilon\right|_{W^{2, 1}_{r_1}(Q)}\right)
\leq C
|y_0|_{C^{2}(\overline\Omega)}.
$$

Iterating  this boost  procedure,   we  can  identify  a sufficiently large  $k\in\mathbb{N}$
such that  $U^k_\varepsilon,  V^k_\varepsilon,  W^k_\varepsilon\in
W^{2, 1}_{r_{k}}(Q)$ with $r_k>\frac{n+2}{1-\delta}$.
Thanks to the Sobolev embedding theorem,  this implies  that
$U^k_\varepsilon,  V^k_\varepsilon,  W^k_\varepsilon\in
  C^{1+\delta,
\frac{1+\delta}{2}}(\overline{Q})$
satisfying
$$
\left|W^k_\varepsilon\right|_{C^{1+\delta, \frac{1+\delta}{2}}(\overline{Q})}\leq
C
 |z|_{C^{2, 0}(\overline{Q})}
|y_0|_{C^{2}(\overline\Omega)},
$$
and
$$
\left|U^k_\varepsilon\right|_{C^{1+\delta, \frac{1+\delta}{2}}(\overline{Q})}
+ \left|V^k_\varepsilon\right|_{C^{1+\delta, \frac{1+\delta}{2}}(\overline{Q})}
\leq C |y_0|_{C^{2}(\overline\Omega)}.
$$
Substituting these two estimates back into the analytical definitions for  $u_\varepsilon$ and $v_\varepsilon$  specified in  \eqref{ll}, we secure
$u_\varepsilon,  v_\varepsilon\in  C^{1+\delta, \frac{1+\delta}{2}}(\overline{Q})$, confirming
 the bounds:
\begin{equation}\label{523}
|u_\varepsilon|_{C^{1+\delta, \frac{1+\delta}{2}}(\overline{Q})}
+|v_\varepsilon|_{C^{1+\delta, \frac{1+\delta}{2}}(\overline{Q})}\leq
C
|y_0|_{C^{2}(\overline\Omega)},
\end{equation}
and
\begin{equation}\label{524}
|u_\varepsilon-v_\varepsilon|_{C^{1+\delta, \frac{1+\delta}{2}}(\overline{Q})}\leq
C
 |z|_{C^{2, 0}(\overline{Q})}
|y_0|_{C^{2}(\overline\Omega)}.
\end{equation}

{\bf Step 5. } Finally,  passing to the limit as $\varepsilon$ tends to zero,
by (\ref{old*}), (\ref{523}) and (\ref{524}), we
obtain  controls $u^*, v^*\in C^{1+\delta, \frac{1+\delta}{2}}(\overline{Q})$
such that the corresponding solutions $y^*$ and $w^*$ to
(\ref{old1}) and (\ref{aeq1*}) satisfy
$y^*(x, T)=w^*(x, T)=0$ in $\Omega$. Furthermore,
the controls $u^*$ and  $v^*$ satisfy
$$|u^*|_{C^{1+\delta, \frac{1+\delta}{2}}(\overline{Q})}+|v^*|_{C^{1+\delta, \frac{1+\delta}{2}}(\overline{Q})}\leq
C
|y_0|_{C^{2}(\overline\Omega)},
$$
and
$$
|u^*-v^*|_{C^{1+\delta, \frac{1+\delta}{2}}(\overline{Q})}\leq
C
 |z|_{C^{2, 0}(\overline{Q})}
|y_0|_{C^{2}(\overline\Omega)}.
$$
This completes the proof of \Cref{t3}.
\end{proof}

Finally, we conclude the proof for \Cref{th2} using the fixed point method and \Cref{t3}.
\begin{proof}[Proof of \Cref{th2}]
First, employing  the arguments analogous to  those utilized
 in  \cite[Theorem  1.1]{Liu} and \cite[Theorem  1.1]{Liux}, we establish a local null controllability result for the quasi-linear  parabolic equation (\ref{aeq1}).
In fact, for any $z\in K$,  we define
 \begin{eqnarray*}
 	&&\mc{U}(z) := \bigg\{\ u\in  C^{1+\delta, \frac{1+\delta}{2}}(\overline{Q}) \  \Big| \  \left|u\right|_{C^{1+\delta, \frac{1+\delta}{2}}(\overline{Q})} \leq C \left|y_0\right|_{C^2(\overline{\Omega})} \ \mbox{and }\\
 	&&\quad\quad\quad\quad\quad\quad\quad\quad\quad\quad\quad\quad\text{the associated solution $y$ to \eqref{old1} satisfies $y(\cdot, T)=0$ in $\Omega$}\  \bigg\},
 \end{eqnarray*}
 and
 \begin{align*}
 	\mc{Y} (z) := \Big\{\ y\in K \ \Big|\  y \text{ solves \eqref{old1} for some } u\in \mc{U}(z)\ \Big\}.
 \end{align*}
 %
%
This defines a  multi-valued mapping $\mc{Y} : K\rightarrow 2^K$, provided
$y_0$ is sufficiently small. Indeed, by the local well-posedness  for (\ref{old1}), we obtain
$$
|y|_{C^{3+\delta, \frac{3+\delta}{2}}(\overline{Q})} \leq C\left(
|u|_{C^{1+\delta, \frac{1+\delta}{2}}(\overline{Q})}+|y_0|_{C^{3+\delta}(\overline\Omega)}\right)
\leq C|y_0|_{C^{3+\delta}(\overline\Omega)}.
$$
Hence, there exists a sufficiently small $\rho_2>0$ such that
 when $|y_0|_{C^{3+\delta}(\overline\Omega)}\leq \rho_2$,
$|y|_{C^{3+\delta, \frac{3+\delta}{2}}(\overline{Q})} \leq 1$, which implies  $\mc{Y}(K)\subseteq K$.

Applying the Kakutani fixed-point theorem (following \cite[Theorem  1.1]{Liu} and
 \cite[Theorem  1.1]{Liux}),
there exists a control $u^*\in C^{1+\delta, \frac{1+\delta}{2}}(\overline{Q})$ satisfying $\left|u^*\right|_{C^{1+\delta, \frac{1+\delta}{2}}(\overline{Q})} \leq C \left|y_0\right|_{C^2(\overline{\Omega})}$ such that
 the associated solution $y^*$ to (\ref{aeq1}) satisfies  $y^*(x, T)=0$ in $\Omega$.
By Theorem \ref{t3}, there also exists a control
 $v^*\in C^{1+\delta, \frac{1+\delta}{2}}(\overline{Q})$ such that
 the associated solution $w^*$ to (\ref{aeq1*}) satisfies  $w^*(x, T)=0$ in $\Omega$.

Furthermore,
 the resultant  solution $y^*\in K$ to (\ref{aeq1}) can
  be regarded as the solution to the linearized system (\ref{old1})
associated to $z=y^*$ and $u=u^*$.  Utilizing  Theorem \ref{t3} and applying
 the Schauder estimates of
linear parabolic equations for (\ref{old1}) (see \cite[Theorem 4.28]{Li}), we derive
\begin{eqnarray}
\begin{array}{rl}
&\displaystyle|u^*-v^*|_{C^{1+\delta, \frac{1+\delta}{2}}(\overline{Q})}\\
&\displaystyle\leq  C
 |y^*|_{C^{2, 0}(\overline{Q})}
|y_0|_{C^{2}(\overline\Omega)}\leq C \left(|u^*|_{C^{\delta, \frac{\delta}{2}}(\overline{Q})}+|y_0|_{C^{2+\delta}(\overline{\Omega})}
\right) |y_0|_{C^{2}(\overline\Omega)}\leq C
|y_0|^{2}_{C^{2+\delta}(\overline\Omega)}.
\end{array}
\end{eqnarray}

Moreover,
following arguments similar to  the proof of Proposition \ref{th1},
the state deviation is quantified as:
\begin{eqnarray*}
&&|y^*-w^*|_{C^{3+\delta, \frac{3+\delta}{2}}(\overline{Q})}\\
&&\leq C\left(|u^*-v^*|_{C^{1+\delta, \frac{1+\delta}{2}}(\overline{Q})}+
\big|\sum_{i, j=1}^{n}(\widehat{a}^{i j}(y^*, \nabla y^*) y^*_{x_i})_{x_j}\big|_{C^{1+\delta, \frac{1+\delta}{2}}(\overline{Q})}+|f(y^*, \nabla y^*)|_{C^{1+\delta, \frac{1+\delta}{2}}(\overline{Q})}\right)\\
&&\leq C|u^*-v^*|_{C^{1+\delta, \frac{1+\delta}{2}}(\overline{Q})}
+C\sum_{i, j=1}^{n}|\widehat{a}^{i j}(y^*, \nabla y^*)|_{C^{1+\delta, \frac{1+\delta}{2}}(\overline{Q})}|y^*_{x_i x_j}|_{C^{1+\delta, \frac{1+\delta}{2}}(\overline{Q})}\\
&&\quad+C\sum_{i, j=1}^{n}|(\widehat{a}^{i j}(y^*, \nabla y^*))_{x_j}|_{C^{1+\delta, \frac{1+\delta}{2}}(\overline{Q})} |y^*_{x_i}|_{C^{1+\delta, \frac{1+\delta}{2}}(\overline{Q})}
+C|f(y^*, \nabla y^*)|_{C^{1+\delta, \frac{1+\delta}{2}}(\overline{Q})}\\
&&\leq C|u^*-v^*|_{C^{1+\delta, \frac{1+\delta}{2}}(\overline{Q})}+C|y^*|^2_{C^{3+\delta, \frac{3+\delta}{2}}(\overline{Q})}\\
&&\leq C\left(|u^*-v^*|_{C^{1+\delta, \frac{1+\delta}{2}}(\overline{Q})}+
|u^*|^2_{C^{1+\delta, \frac{1+\delta}{2}}(\overline{Q})}+
|y_0|^2_{C^{3+\delta}(\overline{\Omega})}\right)\leq
C
|y_0|^{2}_{C^{3+\delta}(\overline{\Omega})}.
\end{eqnarray*}
This completes the proof of Theorem \ref{th2}.

\end{proof}

\begin{remark}
	Note that the null controls obtained in this proof do not have the vanishing property at both ends as stated in \Cref{lemma1!}, because of the difference in the weight function considered for the Carleman estimate in \Cref{chapter2} with the one considered in the article \cite{DLZ}. Consequently, the null controls obtained in this method cannot be used to establish the error estimate \eqref{approx_est} via the time-splitting method described in \Cref{chapter4}.
\end{remark}


\end{document}